\newcommand{\zb}[1]{\ensuremath{\boldsymbol{#1}}}
\newcommand{\R}{\mathbb{R}}
\newcommand{\spa}{\mathrm{span}}
\newcommand{\VV}{\mathcal{V}}
\newcommand{\dd}{\mathrm{d}}
\newcommand{\N}{\mathbb{N}}
\newcommand{\tT}{\mathrm{T}}
\newcommand{\diag}{\mathrm{diag}}
\newcommand{\uu}{\mathbf{u}}
\newcommand{\vv}{\mathbf{v}}
\DeclareMathOperator{\grad}{grad}
\DeclareMathOperator{\Retr}{Retr}
\DeclareMathOperator{\rank}{rank}
\DeclareMathOperator*{\argmin}{argmin}
\newcolumntype{L}[1]{>{\raggedright\let\newline\\\arraybackslash\hspace{0pt}}m{#1}}
\newcolumntype{C}[1]{>{\centering\let\newline\\\arraybackslash\hspace{0pt}}m{#1}}
\newcolumntype{R}[1]{>{\raggedleft\let\newline\\\arraybackslash\hspace{0pt}}m{#1}}
\newcommand\xrowht[2][0]{\addstackgap[.5\dimexpr#2\relax]{\vphantom{#1}}}
\newtheorem{lemma}{Lemma}[section]
\newtheorem{theorem}[lemma]{Theorem}
\newtheorem{corollary}[lemma]{Corollary}
\newtheorem{proposition}[lemma]{Proposition}
\newtheorem{remark}[lemma]{Remark}
\theoremstyle{definition}
\newtheorem{definition}[lemma]{Definition}
\newtheorem{example}[lemma]{Example}
\title{Sparse additive function decompositions facing basis transforms}
\author{
	Fatima Antarou Ba \footnotemark[1]
	\and
	Oleh Melnyk\footnotemark[1]\\%
	%{\footnotesize\href{mailto:melnyk@math.tu-berlin.de}{melnyk@math.tu-berlin.de}}
	\and
	Christian Wald\thanks{Technische Universität Berlin, Institute of Mathematics, Straße des 17. Juni 136, D-10623 Berlin, Germany}\\%
	\and
	Gabriele Steidl\footnotemark[1]\\%
}
\begin{document}
	\maketitle
	\begin{abstract}
		High-dimensional real-world systems can often be well characterized
		by a small number of simultaneous low-complexity interactions.
		The analysis of variance (ANOVA) decomposition 
		and  the anchored decomposition 
		are typical techniques to find sparse additive decompositions  of  functions. 
		In this paper, we are interested in a setting, where these decompositions 
		are not directly spare, but become so after an appropriate basis transform.
		Noting that the sparsity of those additive function  decompositions
		is equivalent to the fact that most of its mixed partial derivatives vanish,
		we can exploit a connection to the underlying function graphs to
		determine an orthogonal transform that realizes 
		the appropriate basis change.
		This is done in three steps:  we apply singular value decomposition to minimize the
		number of vertices of the function graph, and
		joint block diagonalization techniques of families of matrices
		followed by sparse minimization  based on relaxations of the zero ''norm''
		for minimizing the number of edges.
		For the latter one, we propose and analyze minimization techniques over
		the manifold of special orthogonal matrices.
		Various numerical examples illustrate the reliability of our approach
		for functions having, after a basis transform, a sparse additive decomposition into summands with at most two
		variables.
	\end{abstract}
	
	%---------------------------------------------------------------------
	\section{Introduction}
	%---------------------------------------------------------------------
	The approximation of high-dimensional functions is a
	classical topic of mathematical analysis with rich real-world applications.
	Due to new numerical techniques arising from (stochastic) Fourier- and wavelet 
	\cite{BartelPottsSchmischke2022,  PottsSchmischke2021, lippert2022variable, Lippert2023} as well as kernel \cite{WagnerNestler2022} methods
	and deep learning approaches, the topic has recently attained increasing attention.
	For example, including information about the structure of the function class of interest
	into a deep neural network architecture can improve its approximation quality \cite{agarwal2021neural, enouen2022sparse, chang2021node}. In particular, in \cite{enouen2022sparse} block sparse additive neural network architectures are developed with sparsity patterns estimated from data. These block sparse additive neural networks show increased training speed, better memory efficiency and improved generalization.\\
	In this paper, we are interested in high-dimensional functions admitting an 
	additive decomposition into functions depending only on a few variables, i.e.,
	\begin{equation}\label{eq:add}
		f(x) = \sum_{\uu \in \mathcal S} f_\uu(x_\uu), \quad x_\uu \coloneqq (x_i)_{i \in \uu},
	\end{equation}
	where $\mathcal S$ consists of small  subsets of $\{1,\ldots,d\}$,
	and $d\gg 1$ is the dimension of the problem.
	Recently, we dealt with such decompositions in the context of multimarginal optimal transport, 
	where only special structured cost functions can be treated in an efficient way \cite{FatimaMichael2022, Beier2023}.
	
	In general, decompositions of functions of the form \eqref{add} are not unique, but there exist prominent examples in the literature having special desirable properties.
	So the analysis of variance (ANOVA) decomposition \cite{caflisch1997valuation, potts2022interpretable} and anchored \cite{hickernell2004tractability, Hickernell.2004} decompositions and their generalizations \cite{KuoSloan2010} arise in finance for option pricing, bond valuation and the pricing of collateral mortgage-backed securities problems \cite{GRIEBEL2010455}. The ANOVA decomposition is uniquely determined and the anchored decomposition up to a so-called anchored point.
	Both decompositions make it possible to analyze the different dimensions and their interactions, and to perform high dimensional integration \cite{Dick_Kuo_Sloan_2013, GRIEBEL} using quadrature methods  %\cite{Bungartz_Griebel_2004, GRIEBEL2010455, Niederreiter1992} 
	as well as infinite-dimensional integration \cite{BaldeauxGnewuch2014, GRIEBEL, KUO2017217}. Further, in \cite{FatimaJohannesGabi}, we proposed, inspired by the analysis of variance ANOVA decomposition of functions, a Gaussian-Uniform mixture model on the high-dimensional torus which relies on the assumption that the function we wish to approximate can be well explained by limited variable interactions.
	
	A further motivation for considering additive function decompositions comes from probabilistic graphical models \cite[Chapter 13]{Sullivant.2018}. According to the Hammersley-Clifford theorem, the logarithm of a continuous density function $\phi$ of a random vector can be decomposed as \eqref{eq:add} 
	for a certain class of sets $\mathcal S$ defined by conditional dependency between the components of the random vector. Finding an appropriate additive decomposition allows for inference on the interactions between the observed data, which naturally leads to applications related to causality inference and complex systems that frequently appear in computational biology \cite{Trilla-Fuertes.2023}, climate change \cite{O_Kane.2023}, speech recognition \cite{BilmesBartels2005, Bilmes.2002}, predictive modelling \cite{Kapteyn.2021}, energy systems \cite{Li.2022} and many more.
	
	One of the key features of ANOVA, anchored and Hammersley-Clifford decompositions is their minimality in the number of elements $\uu \in \mathcal S$. 
	That is, for $\uu \in \mathcal S$ it is not possible to find further decomposition $f_\uu(x_\uu) = \sum_{\vv \subsetneq \uu} f_\vv(x_\vv)$ consisting only of smaller subsets of $\uu$.  
	A composition with this minimality property is closely related to the structure of the graph associated with $f$. For twice continuously differentiable $f$, an undirected graph $G(f)$ is defined by vertices and edges  
	\begin{equation}\label{eq: graph}
		\mathcal V (f) = \{ i\in [d]: \partial_{x_i} f \neq 0\}
		\quad \text{and} \quad
		\mathcal E (f) = \{ (i,j) \in [d]^2: \partial^2_{x_i,x_j} f \neq 0 \text{ and } i \neq j\},
	\end{equation}
	respectively.  Then, $\mathcal S$ for both ANOVA and anchored decomposition is a subset of the cliques in $\mathcal C(f)$ of the graph. Furthermore, 
	$\mathcal S$ in the Hammersley-Clifford decomposition coincides with maximal cliques in $\mathcal C(f)$. 
	We will use the relation between function graphs and sparse
	additive function decompositions in Section \ref{sect:decmark}.
	
	% {\color{red} Inspired by Singular Value Decomposition (SVD) \cite{BisgardJames2021AaLA, RajwadeA2013IDUt}???}
	
	In this paper, we are interested in the setting, where the function does only admit a sparse additive decomposition after a basis transform.
	In other words, given (noisy) values of the gradient and the Hessian of $f$,
	we aim to find an orthogonal matrix $U\in O(d)$ such that the graph of the function 
	$$
	f_U \coloneqq f (U\cdot )
	$$
	has the smallest number of cliques. 
	We propose a three-step procedure. 
	First, we find an orthogonal matrix $U \in O(d)$ such that the graph of $f_U$
	has the smallest number of vertices $|\mathcal V(f_U)|$. We will see that this can be done by the singular value decomposition of the matrix having the gradient of $f$ at different values as columns.
	Then, in the second and third steps, we aim to minimize the number of edges $|\mathcal E(f_U)|$
	by joint block diagonal decomposition of the 
	Hessians of $f$ at the given points and subsequent sparsification of the 
	individual blocks.
	While we rely on results in \cite{murota2010numerical,blockdiag} for finding the joint blockdiagonalization, we apply recent sparse optimization methods
	with a relaxed $0$ ''norm'' to further sparsify the single blocks.
	The third step requires the solution of a nonconvex optimization problem over the manifold of special orthogonal matrices for which the Riemannian gradient descent \cite{boumal2023intromanifolds} or the Landing method \cite{ablin2022fast} is employed in combination with grid search.
	\\[1ex]
	\textbf{Outline of the paper.}
	In Section \ref{sect:decmark}, we consider sparse additive decompositions.
	We are interested in so-called minimal decompositions and recall that both the ANOVA and anchored decomposition are of such minimal type. We address the relation between 
	minimal additive decompositions, first and second derivatives of functions and 
	corresponding function graphs. Finally, 
	Theorem \ref{prop: term bound} gives an estimate of the influence of $\| \partial_\vv f \|_\infty$  to summands
	$f_{\uu}$ in the anchored and ANOVA decompositions for $\uu \supseteq \vv$.
	In Section \ref{sect:trafo}, we detail the three steps to obtain the desired basis transform
	towards a function which has a sparse additive decomposition.
	Section \ref{sec:MAnOpt}, deals with the optimization problem arising in Step 3,
	which requires optimization techniques over the special orthogonal group.
	Numerical results for functions admitting, after an appropriate basis transform,
	a sparse additive decomposition into summands depending at most on two variables are given in Section \ref{sect:numerics}.
	The appendix contains technical proofs, details on the algorithms
	and gives additional numerical results.
	
	%------------------------------------------------------------------------
	\section{Sparse additive decompositions and mixed derivatives}\label{sect:decmark}
	%------------------------------------------------------------------------
	In the following, let $[d] \coloneqq \{1,\ldots,d\}$. 
	We set 
	$D \coloneqq \bigotimes\limits_{i \in [d]} I_i$, where $I_i \coloneqq [a_i,b_i]$, $a_i < b_i$.
	For a subset $\uu \subseteq [d]$, we use $D_{\uu} \coloneqq \bigotimes\limits_{i \in \uu} I_i$.
	By $\lambda_\uu$, we denote the normalized Lebesgue measure on $D_{\uu}$ and 
	$\lambda \coloneqq \lambda_{[d]}$.
	Further, for $\uu =\{ i_1,\ldots, i_k\} \subseteq [d]$, 
	we use the abbreviation
	$x_\uu \coloneqq (x_{i_1},\ldots,x_{i_k})$ and
	$$
	\partial_{\uu} \coloneqq \partial_{x_{i_1}, \ldots,x_{i_k} }^{|\uu|}.
	$$
	By $\mathcal F$, we denote an appropriate subspace of function $f: D \to \mathbb R$ which will
	specified later.
	We define projection operators on $\mathcal F$ fulfilling the following assumption:
	\\
	
	\noindent
	\textbf{Assumption I:}
	The operators $P_j$, $j \in [d]$ are commuting projections on $\mathcal F$, 
	i.e.,
	$$P_i P_j f = P_j P_i f \quad \text{ for all} \quad f \in \mathcal F,$$
	$P_jf$ is independent of the $j$-th component $x_j$ 
	and $P_jf = f$ if $f$ is independent of $x_j$.
	\\
	
	We set $P_{\uu} \coloneqq \prod_{j \in \uu} P_j$.
	Based on the above projection operators,
	Kuo et al. \cite{KuoSloan2010} defined an additive decomposition of functions in $\mathcal F$
	which fulfills a certain minimization property.
	
	%-----------------------------------------------------
	\begin{theorem}\label{dec:minimal_kuo}
		Let $P_j$, $j \in [d]$ fulfill Assumption I. Then any $f \in \mathcal F$ admits a decomposition
		\begin{equation}\label{eq:general_decomposition}
			f(x)=\sum_{\uu\subseteq [d]}f_\uu(x_\uu)
		\end{equation}
		with
		\begin{equation}\label{eq: general decomposition_s}
			f_\uu:=\prod_{j\in\uu}({\text{Id}}-P_j)P_{[d]\setminus\uu}f 
			= \sum_{\vv \subseteq \uu} (-1)^{|\uu| - |\vv|} P_{[d]\setminus\uu}f.
		\end{equation}
		This decomposition is minimal in the following sense: if  for an arbitrary decomposition
		\begin{align}
			f(x) 
			&=\sum_{\uu\subseteq [d]}\tilde{f}_\uu (x_\uu),
		\end{align}
		we have
		$\tilde{f}_\vv=0$ for all $\vv \supseteq \uu$ and some $\uu \in [d]$,
		then also $f_\vv = 0$ in \eqref{eq:general_decomposition}.
	\end{theorem}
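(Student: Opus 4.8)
The plan is to verify the decomposition identity first and then the minimality statement, in that order. For the decomposition \eqref{eq:general_decomposition}, I would start from the telescoping identity $\text{Id} = \prod_{j\in[d]}\bigl(({\text{Id}}-P_j) + P_j\bigr)$, which holds because the $P_j$ commute (Assumption I). Expanding the product over all subsets $\uu\subseteq[d]$ gives $\text{Id} = \sum_{\uu\subseteq[d]}\prod_{j\in\uu}({\text{Id}}-P_j)\prod_{j\in[d]\setminus\uu}P_j$, and applying this operator identity to $f$ yields exactly \eqref{eq:general_decomposition} with $f_\uu$ as in \eqref{eq: general decomposition_s}; the second equality in \eqref{eq: general decomposition_s} is just the binomial expansion of $\prod_{j\in\uu}({\text{Id}}-P_j)$, again using commutativity. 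I should also check that $f_\uu$ genuinely depends only on $x_\uu$: for $k\in[d]\setminus\uu$ the factor $P_k$ appears in $P_{[d]\setminus\uu}$, and $P_k g$ is independent of $x_k$ by Assumption I, and the remaining factors ${\text{Id}}-P_j$ with $j\in\uu$ commute with $P_k$ and hence preserve independence of $x_k$.

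For minimality, suppose $f=\sum_{\uu\subseteq[d]}\tilde f_\uu(x_\uu)$ is another decomposition with $\tilde f_\vv=0$ for all $\vv\supseteq\uu$. The key computation is to show that the operator $\prod_{j\in\vv}({\text{Id}}-P_j)P_{[d]\setminus\vv}$ annihilates any summand $\tilde f_\wb$ unless $\wb\supseteq\vv$. Indeed, if $j\in\vv\setminus\wb$, then $\tilde f_\wb$ is independent of $x_j$, so $P_j\tilde f_\wb=\tilde f_\wb$ by Assumption I and hence $({\text{Id}}-P_j)\tilde f_\wb=0$; since all factors commute, the whole product kills $\tilde f_\wb$. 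Therefore, applying $\prod_{j\in\vv}({\text{Id}}-P_j)P_{[d]\setminus\vv}$ to $f=\sum_\wb\tilde f_\wb$ leaves only the terms with $\wb\supseteq\vv$, i.e.
$$
f_\vv = \prod_{j\in\vv}({\text{Id}}-P_j)P_{[d]\setminus\vv}f = \sum_{\wb\supseteq\vv}\prod_{j\in\vv}({\text{Id}}-P_j)P_{[d]\setminus\vv}\tilde f_\wb.
$$
Now if $\vv\supseteq\uu$, then every $\wb\supseteq\vv$ also satisfies $\wb\supseteq\uu$, so $\tilde f_\wb=0$ by hypothesis, and the right-hand side vanishes; hence $f_\vv=0$, which is the claim.

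I expect the main obstacle to be bookkeeping rather than conceptual: namely, being careful that the factors $({\text{Id}}-P_j)$ and $P_k$ truly commute on $\mathcal F$ for $j\neq k$ (which follows from $P_iP_j=P_jP_i$ by expanding $({\text{Id}}-P_j)P_k = P_k - P_jP_k$), and that the various "independent of $x_j$" bookkeeping steps are applied to the right intermediate functions. One subtlety worth stating explicitly is that $P_j$ maps $\mathcal F$ into the subspace of functions independent of $x_j$, and on that subspace $P_j$ acts as the identity by Assumption I, so idempotency $P_j^2=P_j$ and the annihilation identities used above are all consistent. With these commutativity and idempotency facts in hand, both the decomposition and the minimality follow by the finite expansions indicated above; no analytic input (integrability, smoothness) is needed beyond what is implicit in $\mathcal F$ being closed under the $P_j$.
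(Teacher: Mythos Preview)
Your argument is correct: the telescoping expansion of $\mathrm{Id}=\prod_{j\in[d]}\bigl((\mathrm{Id}-P_j)+P_j\bigr)$ gives the decomposition, and the annihilation computation for minimality is exactly right (by commutativity you may move the factor $(\mathrm{Id}-P_j)$ with $j\in\vv\setminus\wb$ to act first on $\tilde f_\wb$, where it vanishes). The paper does not supply its own proof of this theorem; it quotes the result from Kuo et al.\ \cite{KuoSloan2010}, and your proposal reproduces the standard argument given there. One minor remark: the second equality in \eqref{eq: general decomposition_s} as printed has a typo (the summand should be $P_{[d]\setminus\vv}f$, not $P_{[d]\setminus\uu}f$), and your binomial expansion would indeed produce the correct version.
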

	%---------------------------------------------------
	
	To illustrate the role of the indices, we give an example.
	
	\begin{example} For $d=3$, we have 
		$$
		f = f_0 + f_1+f_2+f_3+f_{1,2} + f_{1,3}f + f_{2,3} + f_{1,2,3}
		$$
		with
		$f_0 \coloneqq f_\emptyset = P_1 P_2 P_3f = \text{const}$ and
		\begin{align}
			f_1 &= (\text{Id}  - P_1) P_2 P_3 f = P_2 P_3 f - P_1 P_2 P_3 f, 
			\\
			f_{1,2} &= (\text{Id}  - P_1)(\text{Id}  - P_2) P_3 f 
			= P_3 f - P_1 P_3 f - P_2 P_3 f + P_1 P_2 P_3 f
			\\
			f_{1,2,3} &= (\text{Id}  - P_1)(\text{Id}  - P_2)(\text{Id}  - P_3)f\\
			&= f - P_1 f - P_2 f- P_3 f + P_1 P_2 f + P_1 P_3 f + P_2 P_3 f - P_1 P_2 P_3 f
		\end{align}
		and similarly for the other summands.
	\end{example}
	
	The concrete decomposition \eqref{eq:general_decomposition} depends on the chosen projection
	which must be well defined on the space $\mathcal F$. 
	Two frequently addressed decompositions are the anchored and the ANOVA decompositions, 
	where  both projections fulfill Assumption I:
	\begin{itemize}
		\item[i)] 
		the \emph{anchored decomposition} with anchor point 
		$c = (c_1,\ldots,c_d) \in D$ 
		is determined for any $f:D \to \mathbb R$ by
		\begin{equation}\label{proj:anchored}
			P_i f = P_{i, c} f \coloneqq f(x_1, \cdots, x_{i-1}, c_i, x_{i+1}, \cdots, x_d),
			\quad i \in [d],
		\end{equation} 
		and we denote the corresponding decomposition \eqref{eq:general_decomposition} by
		\begin{align}\label{dec_c}
			f(x) 
			&=\sum_{\uu\subseteq [d]} f_{\uu,c} (x_\uu).
		\end{align}
		\item[ii)] 
		the \emph{analysis of variance} (ANOVA) \emph{decomposition}
		is given for absolutely integrable functions $f:D \to \mathbb R$ by 
		\begin{equation}\label{proj:anova}
			P_i f  = P_{i, A} \coloneqq \frac{1}{b_i - a_i}\int _{I_i} f \, \text{d} x_i,
			\quad i \in [d],
		\end{equation} 
		and we use the notation
		\begin{align}\label{dec_a}
			f(x) 
			&=\sum_{\uu\subseteq [d]} f_{\uu,A} (x_\uu).
		\end{align}
	\end{itemize}
	
	The relation between the anchor and ANOVA summands follows immediately by their definition.
	
	\begin{proposition}\label{anova:anchored}
		For $f:D \to \R$ be absolutely integrable and $\uu \subseteq [d]$, we have
		\begin{equation}
			f_{\uu,A} = \int_{D} f_{\uu, c}(x_\uu) \, \dd \lambda(c).
		\end{equation}
	\end{proposition}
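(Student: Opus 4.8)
The plan is to reduce the statement to the elementary fact that the ANOVA projection in a coordinate is the average of the anchored projections over the anchor in that coordinate, and then to propagate this through the explicit, \emph{linear} and \emph{finite} formula \eqref{eq: general decomposition_s}. Throughout I would write $P_{\uu,c}\coloneqq\prod_{j\in\uu}P_{j,c}$ and $P_{\uu,A}\coloneqq\prod_{j\in\uu}P_{j,A}$, with $P_{j,c}$ and $P_{j,A}$ the one-coordinate projections from \eqref{proj:anchored} and \eqref{proj:anova}.

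First I would expand the product in \eqref{eq: general decomposition_s}. Since $\prod_{j\in\uu}(\mathrm{Id}-P_j)=\sum_{\bw\subseteq\uu}(-1)^{|\bw|}P_\bw$, substituting $P_{[d]\setminus\uu}f$ and putting $\vv=\uu\setminus\bw$ yields, for either choice of projection family,
\[
 f_{\uu,c}=\sum_{\vv\subseteq\uu}(-1)^{|\uu|-|\vv|}\,P_{[d]\setminus\vv,\,c}f,\qquad
 f_{\uu,A}=\sum_{\vv\subseteq\uu}(-1)^{|\uu|-|\vv|}\,P_{[d]\setminus\vv,\,A}f .
\]
By linearity of the integral over the finite collection $\{\vv:\vv\subseteq\uu\}$, it then suffices to prove that for every $\bw\subseteq[d]$
\[
 \int_D P_{\bw,c}f\,\dd\lambda(c)=P_{\bw,A}f ,
\]
after which the assertion follows by applying this with $\bw=[d]\setminus\vv$ and summing.

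Next I would establish this operator identity. By \eqref{proj:anchored}, $P_{\bw,c}f$ is the function obtained from $f$ by replacing $x_j$ with $c_j$ for every $j\in\bw$; hence it depends on the anchor $c$ only through the components $(c_j)_{j\in\bw}$ and, as a function of $x$, only through $(x_j)_{j\notin\bw}$. Since $\lambda=\lambda_{[d]}$ is the normalized product Lebesgue measure on $D$ and $\lambda_\bw$ its marginal on $D_\bw$, the integration over the components $(c_j)_{j\notin\bw}$ runs against a probability measure and leaves $P_{\bw,c}f$ unchanged, so $\int_D P_{\bw,c}f\,\dd\lambda(c)=\int_{D_\bw}P_{\bw,c}f\,\dd\lambda_\bw(c_\bw)$. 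Relabelling the integration variables $c_j$ as $x_j$ for $j\in\bw$ and using that the one-dimensional averages commute, the right-hand side equals $\prod_{j\in\bw}\bigl(\tfrac{1}{b_j-a_j}\int_{I_j}\cdot\,\dd x_j\bigr)f=P_{\bw,A}f$ by \eqref{proj:anova}, which finishes the argument.

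The only genuine obstacle is measure-theoretic bookkeeping: one must check that $(x,c)\mapsto P_{\bw,c}f(x)$ is jointly measurable and that the pointwise substitutions defining $P_{j,c_j}f$ are admissible for $\lambda$-almost every anchor when $f$ is merely absolutely integrable, so that Fubini's theorem legitimizes both the reduction from $D$ to $D_\bw$ and the identification with the iterated coordinatewise average. Once this is in place, the remaining ingredients — expanding the product and interchanging a finite sum with the integral — are routine.
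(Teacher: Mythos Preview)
Your proposal is correct and follows essentially the same route as the paper's proof: expand $f_{\uu,c}$ via the inclusion--exclusion formula \eqref{eq: general decomposition_s}, interchange the finite sum with the integral over $c$, and then recognize that $\int_D P_{[d]\setminus\vv,c}f\,\dd\lambda(c)=P_{[d]\setminus\vv,A}f$ because the anchored projection only depends on the components $c_{[d]\setminus\vv}$ and integrating them out is precisely the ANOVA projection. Your additional remarks on the measure-theoretic bookkeeping (joint measurability, Fubini) are appropriate caveats that the paper simply passes over.
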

	
	\begin{proof}
		By \eqref{eq: general decomposition_s}, we obtain 
		\begin{align}
			\int_{D} f_{\uu, c}(x_\uu) \dd \lambda(c) 
			&= \int_D\sum_{\vv \subseteq \uu} (-1)^{|\uu|-|\vv|} (P_{[d]\setminus \vv, c}f)(x_\vv) \, \dd \lambda(c) \\
			&= \sum_{\vv \subseteq \uu} (-1)^{|\uu|-|\vv|}\int_D (P_{[d]\setminus \vv, c}f)(x_{\vv}) \, \dd \lambda(c) \\
			&= \sum_{\vv \subseteq \uu} (-1)^{|\uu|-|\vv|}\int_{D_{[d]\setminus \vv}} f(x_\vv, c_{[d]\setminus v})\dd \lambda_{[d]\setminus \vv}(c) \\
			&= \sum_{\vv \subseteq \uu} (-1)^{|\uu|-|\vv|}  P_{[d]\setminus \vv,A} f 
			=  f_{\uu,A}(x_\uu).
		\end{align}
	\end{proof}
	
	Next, we are interested in the relation between first- and second-order derivatives of $f \in C^2(D)$
	and minimal additive decompositions \eqref{eq:general_decomposition}.
	For this, it appears to be useful to consider the undirected graphs of functions.
	
	\begin{definition}[Graph of a functions] \label{def:graph}
		To $f \in C^2(D)$, we assign the graph $\mathcal G(f)$ with vertices 
		\begin{equation}\label{eq: graph_v}
			\mathcal V (f) \coloneqq \{ i\in [d]: \partial_i f \neq 0\}
		\end{equation}
		and edges
		\begin{equation}\label{eq: graph_e}
			\mathcal E (f) \coloneqq \{ (i,j) \in [d]^2: \partial_{i,j} f \neq 0 \text{ for } i \neq j\}.
		\end{equation}
		A \emph{clique} $\mathcal C(f)$ of $\mathcal G(f)$ is a subset of vertices of $\mathcal V(f)$ such that every two distinct vertices are connected.
	\end{definition}
	
	Then we have the following relation.
	
	\begin{theorem}\label{decomposition:partial}
		Let $f\in C^2(D)$. 
		Assume that $\partial_{i,j} f=0$ for some $i \not = j$.
		Then we have in \eqref{eq:general_decomposition} that $f_\uu = 0$ 
		for all $\uu \in [d]$ with $\{i,j\}\subseteq \uu$.
		Moreover, it holds  
		\begin{equation}\label{cl}
			f(x) = \sum_{\uu\subseteq \mathcal C(f) } {f}_\uu (x_\uu) .
		\end{equation}
	\end{theorem}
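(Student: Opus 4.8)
The plan is to deduce the first assertion from the minimality property in Theorem~\ref{dec:minimal_kuo}, and then to obtain \eqref{cl} by combining it with the elementary fact that a function independent of a variable carries no decomposition term involving that variable. The starting point is a calculus lemma: if $f\in C^2(D)$ and $\partial_{i,j}f=0$ on the box $D$, then $f$ splits as $f(x)=g(x_{[d]\setminus\{j\}})+h(x_{[d]\setminus\{i\}})$. Indeed, after relabelling $i=1$, $j=2$ and fixing any $c\in D$, take $g(x):=f(c_1,x_2,\dots,x_d)$, which is independent of $x_1$, and $h:=f-g$; then $\partial_2 h(x)=\int_{c_1}^{x_1}\partial_{1,2}f(t,x_2,\dots,x_d)\dx t=0$, so $h$ is independent of $x_2$. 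This is the only place where the regularity of $f$ and the product structure of $D$ are used.

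I would then view this splitting as a coarse additive decomposition $f=\sum_{\uu\subseteq[d]}\tilde f_\uu(x_\uu)$ with $\tilde f_{[d]\setminus\{j\}}:=g$, $\tilde f_{[d]\setminus\{i\}}:=h$, and all other $\tilde f_\uu:=0$, noting that each summand indeed depends only on its index set. Since neither $[d]\setminus\{j\}$ nor $[d]\setminus\{i\}$ contains $\{i,j\}$, we get $\tilde f_\vv=0$ for all $\vv\supseteq\{i,j\}$, and Theorem~\ref{dec:minimal_kuo} applied with $\uu=\{i,j\}$ forces $f_\vv=0$ in \eqref{eq:general_decomposition} for all $\vv\supseteq\{i,j\}$. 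That is exactly the first claim.

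For \eqref{cl}, I would add the observation that if $i\notin\mathcal V(f)$, i.e.\ $\partial_i f=0$, then $f$ does not depend on $x_i$ (as $D$ is connected), hence $P_if=f$ by Assumption~I; pulling the factor $\mathrm{Id}-P_i$ to the front in \eqref{eq: general decomposition_s} — or invoking minimality with the trivial decomposition $\tilde f_{[d]\setminus\{i\}}:=f$ — shows $f_\uu=0$ whenever $i\in\uu$. Combining the two vanishing statements, any surviving term $f_\uu$ in \eqref{eq:general_decomposition} satisfies $\uu\subseteq\mathcal V(f)$, and for every pair of distinct $i,j\in\uu$ one has $(i,j)\in\mathcal E(f)$ because otherwise $\partial_{i,j}f=0$ and $f_\uu=0$. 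Hence $\uu$ induces a complete subgraph of $\mathcal G(f)$, i.e.\ is a clique, and dropping the vanishing summands of \eqref{eq:general_decomposition} gives \eqref{cl}.

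The only genuine work lies in the calculus lemma and in the bookkeeping: one must check that the constructed $\tilde f_\uu$ are admissible summands and that the index set on which they vanish is upward closed, so that Theorem~\ref{dec:minimal_kuo} applies verbatim. A more computational alternative — verifying $(\mathrm{Id}-P_i)(\mathrm{Id}-P_j)g=0$ whenever $\partial_{i,j}g=0$ directly for the anchored and the ANOVA projections separately — also works, but it is longer and hides the structural reason behind the statement.
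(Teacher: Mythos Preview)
Your argument is correct and follows the same high-level strategy as the paper: exhibit an explicit additive decomposition of $f$ in which no summand is indexed by a set containing $\{i,j\}$, then invoke the minimality in Theorem~\ref{dec:minimal_kuo}. The paper arrives at the three-term identity $f=P_{i,c}f+P_{j,c}f-P_{\{i,j\},c}f$ via a second-order Taylor expansion with integral remainder, whereas your two-term splitting $f=g+h$ comes from a single application of the fundamental theorem of calculus to $\partial_2(f-g)$; this is shorter and avoids the bookkeeping with $R_{1,1},R_{2,2}$. In fact your $h=f-P_{1,c}f$ coincides with the paper's $P_{2,c}f-P_{\{1,2\},c}f$ once one knows $h$ is independent of $x_2$, so the decompositions agree. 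One small slip: with your labeling $i=1,j=2$, the function $g$ is independent of $x_i$ and $h$ of $x_j$, so the assignments should read $\tilde f_{[d]\setminus\{i\}}:=g$ and $\tilde f_{[d]\setminus\{j\}}:=h$ (you have them swapped); this is harmless for the conclusion. Your treatment of the clique statement is also more careful than the paper's, which does not explicitly dispose of singletons $\uu=\{i\}$ with $i\notin\mathcal V(f)$; your observation that $P_if=f$ forces $(\mathrm{Id}-P_i)$ to annihilate every $f_\uu$ with $i\in\uu$ fills that gap cleanly.
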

	
	\begin{proof}
		We will show that $f$ has a decomposition
		\begin{equation} \label{xx}
			f =  \sum_{\uu\subseteq [d]}\tilde{f}_\uu 
			= P_{i,c} f + P_{j,c} f  - P_{\{i,j\},c} f.
		\end{equation}
		Then this decomposition fulfills $\tilde f_\uu = 0$ whenever $\{i,j\} \subseteq \uu$, and 
		Theorem \ref{dec:minimal_kuo} implies that $f_\uu = 0$. This gives the first part of the assertion.
		As a consequence, $f_\uu$ is zero unless all pairs $\{i,j\} \subseteq \uu$ admit $(i,j) \in \mathcal E(f)$, i.e., $\uu$ is a clique and we are done.
		
		It remains to prove \eqref{xx}. 
		Without loss of generality, let $\partial_{1,2} f=0$. 
		Then the Taylor expansion at $\tilde x \coloneqq (c_1,c_2,x_3, \ldots, x_d)$ reads as
		\begin{align}
			f(x) 
			&= f(\tilde x) + 
			\partial_1 f(\tilde x) (x_1 - c_1) 
			+ \partial_2 f(\tilde x) (x_2 - c_2)
			\\
			& \quad + R_{1,1}  (x_1-c_1)^2 + R_{2,2} (x_2-c_2)^2 + 2 R_{1,2} (x_1 -c_1) (x_2-c_2), 
		\end{align}
		where
		\[
		R_{i,j}  \coloneqq \int_0^1 (1-t) \partial_{i,j} f(\tilde x + t(x - \tilde x) ) \, \text{d} t, \quad i,j  \in \{1,2\} .
		\]
		By the assumption, $R_{1,2}f = 0$.
		Further, the Taylor expansion of $P_{2,c}f$ at $\tilde x$ is
		\[
		P_{2,c} f(x) = f(\bar x) =  f(\tilde x) + \partial_{1} f(\tilde x) (x_1 - c_1) + \bar R_{1,1}  (x_1- c_1)^2 
		\]
		with $\bar x \coloneqq (x_1,c_2, x_3, \ldots, x_d)$ and 
		\[
		\bar R_{1,1} 
		\coloneqq 
		\int_0^1 (1-t) \partial_{1,1} f(\tilde x + t(\bar x - \tilde x) ) \, \text{d} t,
		\] 
		and similarly for $P_{1,c} f$.
		Since $\partial_1 f$ is constant with respect to $x_2$, the same holds true for $\partial_{1,1}f$.
		Hence we conclude by their definition that $R_{1,1} = \bar R_{1,1}$.
		Consequently, we obtain
		\[
		f(x) 
		= P_{1,c} f(x) + P_{2,c} f(x) - f(\tilde x) 
		= P_{1,c} f(x) + P_{2,c} f(x)  - P_{\{1,2\},c} f(x).
		\]
		This finishes the proof.
	\end{proof}
	
	We have seen that  $\partial_{i,j} f=0$   
	implies that all summands $f_\uu$  with 
	$\uu \supseteq \{i,j\}$
	vanish in \eqref{eq:general_decomposition}.
	Next, we  want to estimate the general influence of $\| \partial_\vv f \|_\infty$  to summands
	$f_{\uu, \cdot}$ in the anchored and ANOVA decompositions for $\uu \supseteq \vv$.

	\begin{theorem} \label{prop: term bound}
		Let 
		$\vv \subseteq [d]$ 
		and 
		$f\in C^{|\vv|}(D)$.
		Then, for $\uu\supseteq \vv$, the following estimates hold true
		in the anchor and ANOVA decompositions:
		\begin{itemize}
			\item[i)] $ \|f_{\uu,c}\|_{\infty} \leq 2^{|\uu|-|\vv|} \|\partial_{x_\vv}f\|_\infty\, \lambda_\vv(D_\vv),$
			\item[ii)] $\|f_{\uu,A}\|_{\infty}\leq 2^{|\uu|-|\vv|} \|\partial_{x_\vv}f\|_\infty\, \lambda(D)\lambda_\vv(D_\vv),$
			\item[iii)] $\norm{f_{\uu,A}}_1  \leq 2^{|\uu|-|\vv|}\|\partial_{x_{\vv}}f\|_1 \, \lambda_\uu(D_\uu)\lambda_\vv(D_\vv).$
		\end{itemize}
	\end{theorem}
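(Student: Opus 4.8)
The plan is to start from the explicit formula for the decomposition summands in Theorem~\ref{dec:minimal_kuo}, namely $f_{\uu} = \prod_{j \in \uu}(\mathrm{Id} - P_j) P_{[d]\setminus \uu} f$, and to exploit the fact that $\uu \supseteq \vv$ allows us to split the product $\prod_{j \in \uu}(\mathrm{Id}-P_j)$ into the factors indexed by $\vv$ and those indexed by $\uu \setminus \vv$. The key observation is that each factor $\mathrm{Id} - P_j$ can be written, when applied to a function and evaluated, as an integral of a first derivative: for the anchored projection, $(\mathrm{Id} - P_{i,c})g(x) = g(x) - g(x_1,\dots,c_i,\dots,x_d) = \int_{c_i}^{x_i} \partial_i g \,\dd t$, and more generally applying $\prod_{j \in \vv}(\mathrm{Id} - P_{j,c})$ produces an iterated integral of $\partial_\vv g$ over a box whose volume is at most $\lambda_\vv(D_\vv)$ (in the normalized measure; I must track the normalization carefully). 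This immediately yields a factor $\|\partial_\vv f\|_\infty \, \lambda_\vv(D_\vv)$.

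\textbf{For part (i),} after peeling off the $\vv$-factors, the remaining operator is $\prod_{j \in \uu \setminus \vv}(\mathrm{Id}-P_j)$ composed with $P_{[d]\setminus\uu}$ applied to $\partial_\vv f$ (up to the integral representation). Each factor $\mathrm{Id} - P_j$ has operator norm at most $2$ on $L^\infty$ since $\|P_j g\|_\infty \le \|g\|_\infty$ for the anchored projection (evaluation) — this is where the $2^{|\uu|-|\vv|}$ comes from. The projection $P_{[d]\setminus\uu}$ is also an $L^\infty$-contraction, so it contributes no factor. Chaining these estimates gives $\|f_{\uu,c}\|_\infty \le 2^{|\uu|-|\vv|}\|\partial_\vv f\|_\infty \lambda_\vv(D_\vv)$. \textbf{For part (ii),} I would invoke Proposition~\ref{anova:anchored}, which gives $f_{\uu,A} = \int_D f_{\uu,c}\,\dd\lambda(c)$; taking the sup norm inside the integral and using (i) pointwise in $c$ gives $\|f_{\uu,A}\|_\infty \le \int_D \|f_{\uu,c}\|_\infty \,\dd\lambda(c) \le 2^{|\uu|-|\vv|}\|\partial_\vv f\|_\infty\,\lambda_\vv(D_\vv)\,\lambda(D)$, matching the claimed bound. \textbf{For part (iii),} I would instead work directly with the ANOVA projections $P_{i,A}$, which are $L^1$-contractions (averaging), redo the integral-representation argument with $L^1$ norms — integrating $|\partial_\vv f|$ over the extra box $D_\uu$ costs $\lambda_\uu(D_\uu)$, and the iterated-integral bound over the $\vv$-directions costs $\lambda_\vv(D_\vv)$ — while each $\mathrm{Id}-P_{j,A}$ again has $L^1$-norm at most $2$, producing the factor $2^{|\uu|-|\vv|}$.

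\textbf{The main obstacle} I anticipate is bookkeeping with the normalized Lebesgue measures: the projections $P_{i,A}$ are defined with the $1/(b_i-a_i)$ normalization, whereas the integral representation of $\mathrm{Id}-P_i$ via the fundamental theorem of calculus uses the unnormalized integral, so the volume factors $\lambda_\vv(D_\vv)$ and $\lambda_\uu(D_\uu)$ versus $\lambda(D)$ must be reconciled by carefully inserting and cancelling the normalizing constants. A secondary subtlety is justifying that $\prod_{j\in\vv}(\mathrm{Id}-P_j)$ applied to $f$ can be represented as the iterated integral of $\partial_\vv f$ — this needs the commutation of the $P_j$ (Assumption~I) and, for the anchored case, a telescoping/induction on $|\vv|$; for the ANOVA case one additionally uses Fubini to interchange the averaging integrals with the derivative integrals. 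Once the normalization is pinned down, the norm estimates are routine applications of the triangle inequality and the contractivity of the projections.
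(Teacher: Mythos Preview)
Your approach for parts (i) and (ii) is essentially the paper's own proof. The paper packages the iterated fundamental-theorem-of-calculus identity you describe as a separate lemma (an integral representation $f_{\uu,c}=\int_{D_\uu}\partial_{t_\uu}f\cdot M_\uu\,\dd\lambda_\uu$ with $|M_\uu|\le 1$, proved by induction on $|\uu|$), then splits $\uu=\vv\cup(\uu\setminus\vv)$ exactly as you do: the $\vv$-factors give the integral of $\partial_\vv f$ and hence the factor $\|\partial_\vv f\|_\infty\,\lambda_\vv(D_\vv)$, while each remaining factor $\mathrm{Id}-P_{i,c}$ is bounded by $2$ in $L^\infty$. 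Part (ii) is obtained in both cases by integrating (i) over the anchor via Proposition~\ref{anova:anchored}.

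For part (iii) your route differs from the paper's. The paper does \emph{not} work directly with the ANOVA projections; instead it again passes through the anchored decomposition via Proposition~\ref{anova:anchored}, expands $\prod_{i\in\uu\setminus\vv}(\mathrm{Id}-P_{i,c})$ explicitly as an alternating sum $\sum_{\bs\subseteq\uu\setminus\vv}(-1)^{|\uu\setminus\vv|-|\bs|}\,f^{x_\bs,c_{(\uu\setminus\vv)\setminus\bs}}_{\vv,c_{[d]\setminus(\uu\setminus\vv)}}$, and then bounds the $L^1$-norm of each of the $2^{|\uu|-|\vv|}$ terms separately by repeated Fubini and the anchored integral representation. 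Your proposal---use that $P_{i,A}$ is an $L^1$-contraction so $\|\mathrm{Id}-P_{i,A}\|_{L^1\to L^1}\le 2$, and bound the $\vv$-part via the integral representation and $\|\partial_\vv(P_{[d]\setminus\uu,A}f)\|_1\le\|\partial_\vv f\|_1$---is more streamlined and avoids the explicit subset expansion; it is correct once you note that $\partial_\vv$ commutes with $P_{[d]\setminus\uu,A}$ (since $\vv\subseteq\uu$). The paper's expansion buys a completely elementary argument that never invokes operator norms on $L^1$, at the cost of heavier bookkeeping; your argument is shorter but requires the reader to accept the $L^1$-contractivity of the averaging projections. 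Your worry about the normalized versus unnormalized Lebesgue measure is justified: the paper itself is not entirely consistent on this point (the FTC step in the integral-representation lemma uses the unnormalized integral), so you should fix a convention at the outset and carry it through.
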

	
	The proof is given in Appendix \ref{app:A}.
	
	%------------------------------------------
	\section{Basis transforms towards sparse function decomposition }\label{sect:trafo}
	%------------------------------------------
	Decompositions of the form \eqref{eq:general_decomposition} 
	can be  applied for an efficient integration over high-dimensional data
	if
	\begin{itemize}
		\item[i)] the number of non-vanishing components $\uu \in [d]$, and/or
		\item[ii)] their cardinalities $|\uu|$ 
	\end{itemize}
	are small. In Theorem \ref{decomposition:partial}, we observed that these properties depend on the structure of the underlying function graph $\mathcal G(f)$ and in particular on the set of cliques $\mathcal{C}(f)$. More precisely $\mathcal G(f)$ should not contain large cliques. 
	
	In this section, we are interested in the case when the function $f$ admits only a sparse
	additive decomposition after an appropriate basis transform.
	We aim to determine such a basis transform given the first and second partial derivatives of $f$
	at various points in $D \subset \mathbb R^d$.
	Later, in our numerical experiments, we will assume that the summands
	depend at most on two of the transformed variables.
	
	Let $\mathbb B_r(d) \coloneqq \{x \in \mathbb R^d: \|x\| \le r\}$. 
	If the dimension is clear, we just write $\mathbb B_r = \mathbb B_r(d)$.
	By $M_d(\R)$, we denote the space of $d\times d$
	real-valued matrices.
	Let $O(d)$ denote the Lie group of 
	orthogonal $d \times d$ matrices and $SO(d)$ its subgroup of rotation matrices having determinant 1.
	We deal with functions $f:\mathbb B_r(d) \to \mathbb R$.
	For $U \in O(d)$, we set $f_U \coloneqq f(U \cdot)$.
	Clearly, $\mathbb B_r(d)$ is invariant under the action of orthogonal matrices, so that 
	$f_U:\mathbb B_r(d) \to \mathbb R$ is also well-defined.
	Further, we have for twice differentiable $f$ that
	\begin{equation} \label{der}
		\nabla f_U = U^\tT \nabla f(U \cdot) \quad     \text{and} \quad
		\nabla^2 f_U = U^\tT \nabla^2 f(U \cdot) U.
	\end{equation}
	
	\begin{example}
		Consider the orthogonal matrix
		$$
		U = (u_1,u_2,u_3,u_4) =
		\frac12 \left(
		\begin{array}{rrrr}
			1&1&1&1\\
			1&-1&1&-1\\
			-1&-1&1&1\\
			-1&1&1&-1
		\end{array}
		\right).
		$$
		Then the function
		$$f(x) = h_1(u_1^\tT x, u_2^\tT x ) + h_2 (u_1^\tT x, u_3^\tT x), \quad x=(x_1,x_2,x_3,x_4)$$
		has in general no sparse decomposition in the components of $x$, 
		but
		$$f_U(x) = f(Ux) = h_1(u_1^\tT U x, u_2^\tT U x ) + h_2 (u_1^\tT U x, u_3^\tT Ux) =
		h_1(x_1,x_2) + h_2(x_1,x_3)$$
		admits such a decomposition. This phenomenon is also illustrated in Figures \ref{fig:f_1_rot},\ref{fig:plt_f2},\ref{fig:plt_f2_rot}.
	\end{example}
	
	\begin{figure}
		\centering
		\begin{minipage}{.49\linewidth}
			\includegraphics[width=\linewidth]{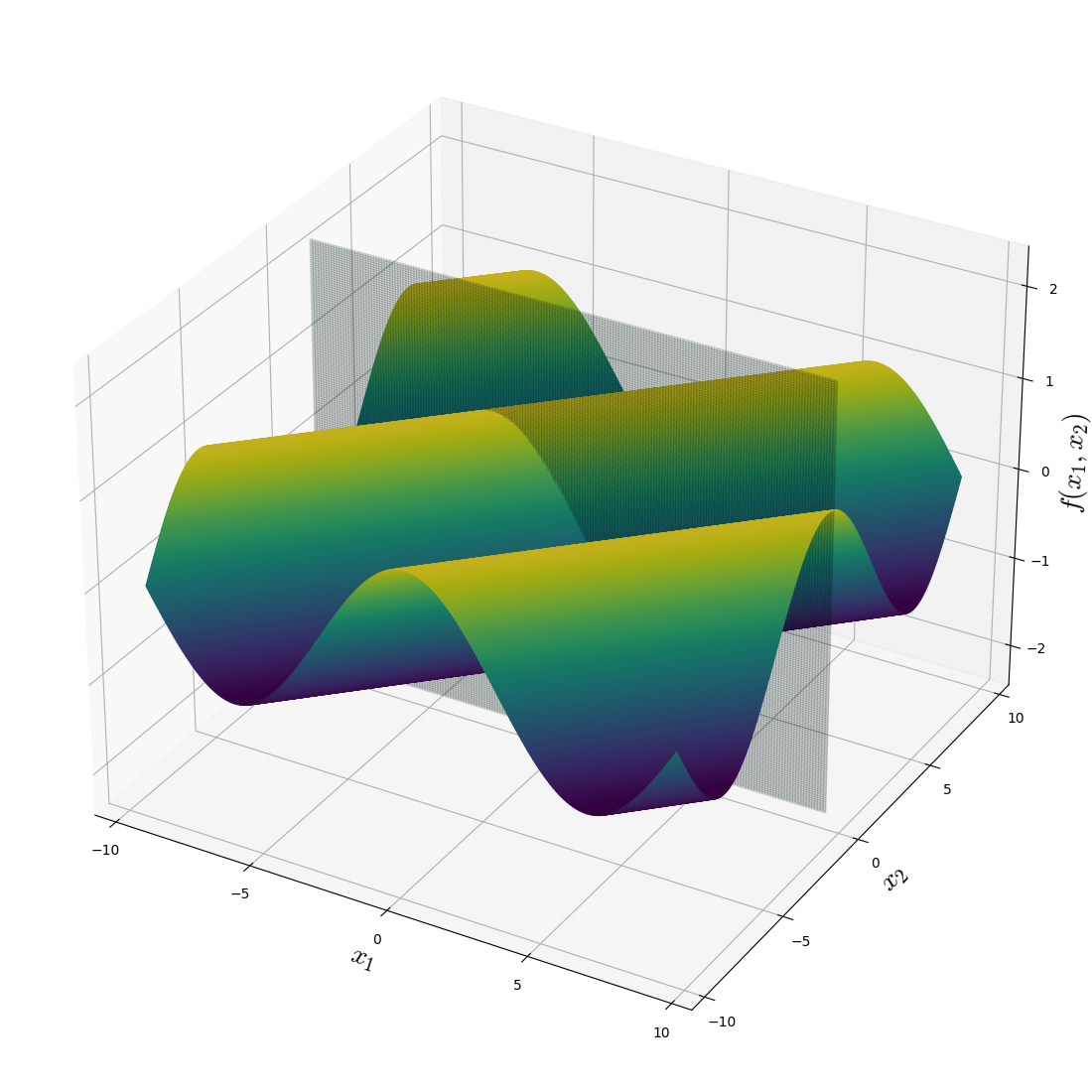}
			%\caption{$L_{\infty}$}
			%\label{fig:enter-label}
		\end{minipage}
		\begin{minipage}{.49\linewidth}
			\includegraphics[width=\linewidth]{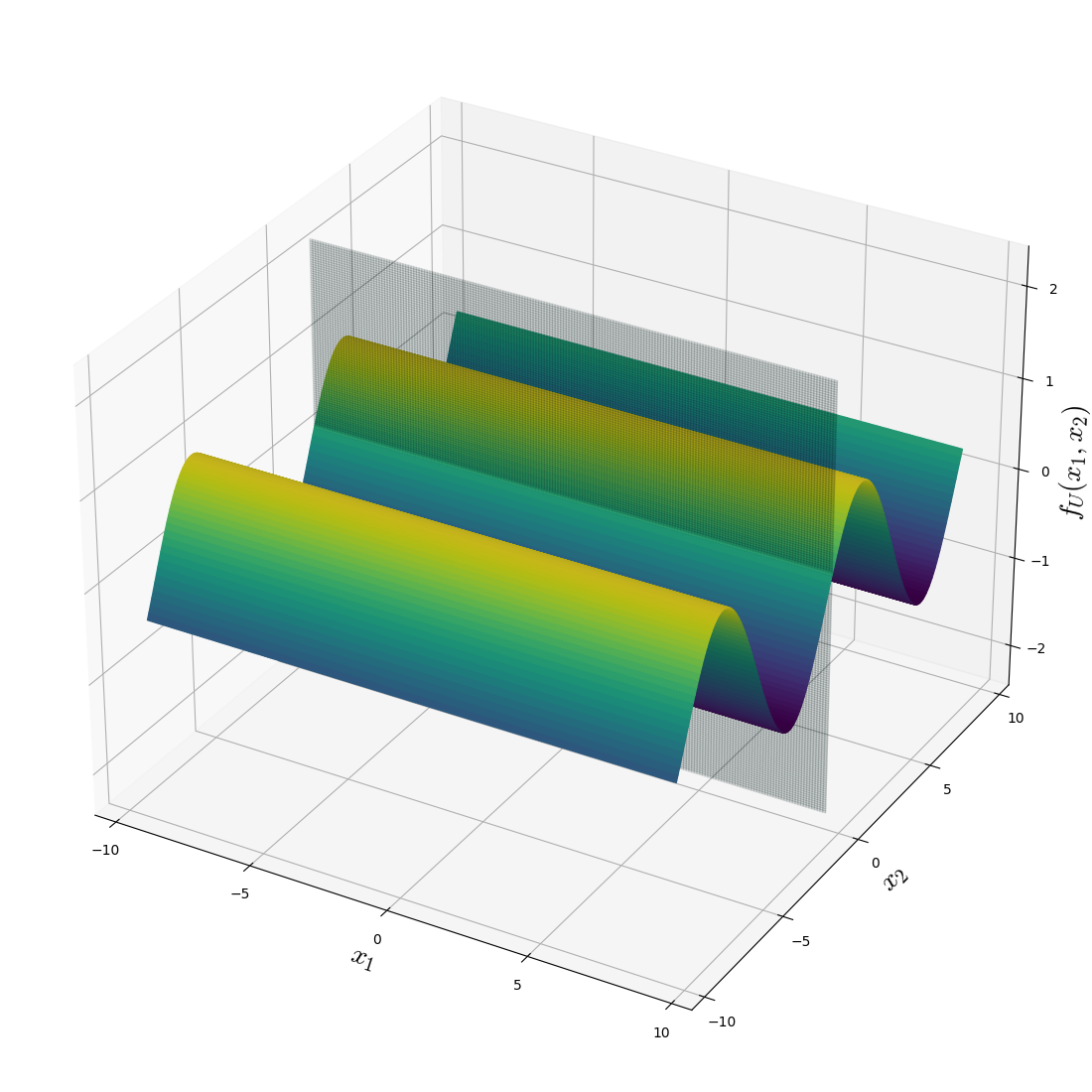}
			%\caption{$L1-norm$}
			%\label{fig:enter-label}
		\end{minipage}
		\caption{Left: $f(x) \coloneqq \sin(u_1^{\tT}x\sqrt{2}/2)$ where $U=(u_1,u_2)$ is a $2$-dimensional rotation matrix of rotation angle $\pi/4.$ Right: 
			Plot of $f_{U}.$ The left-hand diagram shows that $f$ depends on both variables $x_1$ and $x_2$ while $f_U$ is constant in $x_1.$}
		\label{fig:f_1_rot}
	\end{figure}
	
	\begin{figure}[h!]
		\centering
		\begin{minipage}{.49\linewidth}
			\includegraphics[width=\linewidth]{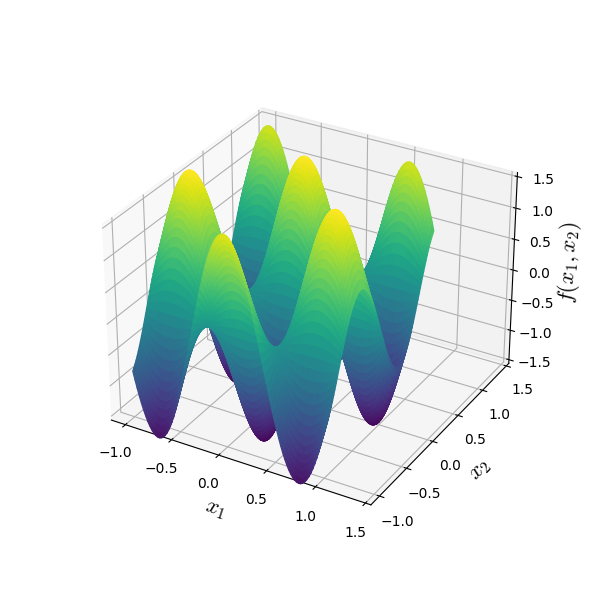}
			%\caption{$L_{\infty}$}
			%\label{fig:enter-label}
		\end{minipage}
		\begin{minipage}{.49\linewidth}
			\includegraphics[width=.7\linewidth]{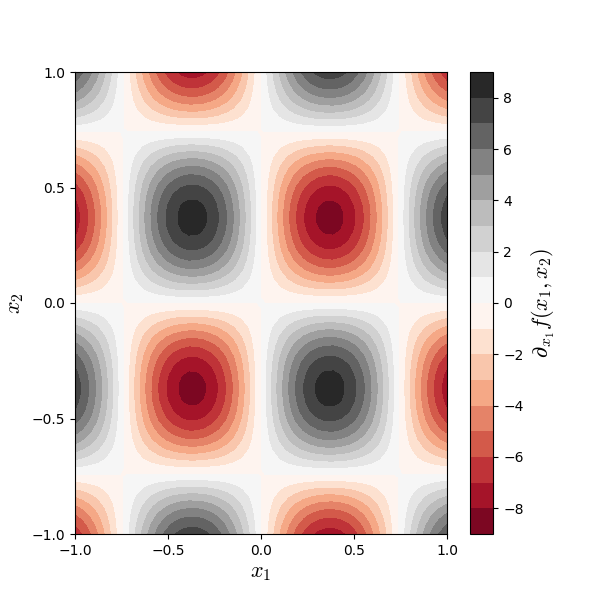}
			%\caption{$L1-norm$}
			%\label{fig:enter-label}
		\end{minipage}
		\caption{ Left: $f(x)= \sin(5 u_1^{\tT}x) + \sin(5 u_2^{\tT}x)$ where $U=(u_1, u_2)$ is a $2$-dimensional rotation matrix of angle $\pi/4.$ Right: Image of the partial derivative $\partial_{x_1}f(x).$ The partial derivative $\partial_{x_1}f(x)$ depends on $x_1$ and $x_2$ since its values vary in both variables and thus $f$ cannot be decomposed as a sum of two univariate functions.}
		\label{fig:plt_f2}
	\end{figure}
	
	Based on the given values of the gradient 
	$\nabla f (x^{(n)})$ 
	and the Hessian 
	$\nabla^2 f (x^{(n)})$ at $x^{(n)} \in \mathbb B_r (d)$,
	$n \in [N]$, 
	we are searching for those matrices 
	$U \in O(d)$ 
	for which the graph 
	$\mathcal G( f_U )$
	has the smallest number of vertices and edges.
	We will find (an approximation of) such $U \in O(d)$ in two steps which are detailed in the following subsections:
	
	\begin{figure}[h!]
		\centering
		\begin{minipage}{.49\linewidth}
			\includegraphics[width=\linewidth]{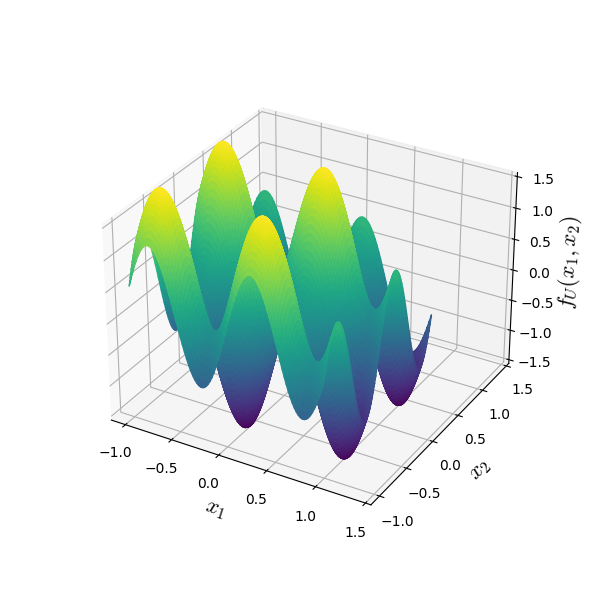}
		\end{minipage}
		\begin{minipage}{.49\linewidth}
			\includegraphics[width=.7\linewidth]{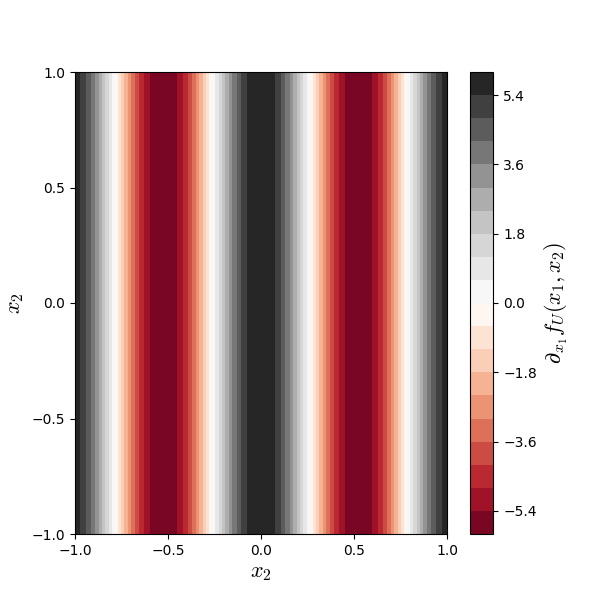}
		\end{minipage}
		\caption{Left: $f_U$ where $f, U$ are as in Figure~\ref{fig:plt_f2}. Right: Image of the partial derivative $\partial_{x_1}f_{U}$ which is constant with respect to $x_2$. Therefore $f_U$ can be decomposed as a sum of two univariate functions (see Theorem~\ref{decomposition:partial}).}
		\label{fig:plt_f2_rot}
	\end{figure}
	\begin{itemize}
		\item[1.]\label{schritt:1} \textbf{Vertex minimization}: Find a vertex minimizing matrix
		\begin{equation} \label{eq_v}
			U_{\mathcal V} \in \argmin_{U \in O(d)} \mathcal V (f_U).
		\end{equation}  
		\item[2.] \textbf{Edge minimization}: Using only the relevant vertices appearing in  $\mathcal G (f_{U_{\mathcal V}})$,
		say wlog related to $x = (x_1,\ldots,x_{d_1})$,
		$d_1 \le d$,
		we reduce our attention to $g(x) \coloneqq f_{U_{\mathcal V}}(x,0)$, 
		$x \in \mathbb B_{r}(d_1)$
		and search for an edge-minimizing matrix, i.e.
		\begin{equation} \label{min_e}
			U_{\mathcal E} \in \argmin_{U \in O(d_1)} \mathcal E (g)
		\end{equation} 
		in two steps:
		\begin{itemize}
			\item[2.1.]\label{schritt:2.1}  \textbf{Finest connected component decomposition}: 
			Find $U \in O(d_1)$ such that $\mathcal G(g_U)$ provides the ''finest'' decomposition into connected components.  
			\item[2.2.]  \textbf{Sparse component decomposition}:
			For each of the connected components, find an orthogonal matrix that transforms a connected component into one with the smallest number of edges. 
		\end{itemize}
	\end{itemize}
	
	%--------------------------------------------------------------
	\subsection{Vertex minimization}\label{mv} 
	%--------------------------------------------------------------
	First, we deal with the minimization problem \eqref{eq_v}. 
	By \eqref{eq: graph_v}, this is equivalent to the fact that most of the 
	directional derivatives of $f$ in direction $u_j$, $j=1,\ldots,d$
	vanish, i.e.
	\begin{equation}\label{eq:di1}
		\partial_{j} f_U = u_j^\tT \nabla f (U \cdot) = 0.
	\end{equation}
	Let 
	\[
	V_{\nabla f} \coloneqq \spa\{\nabla f(x) :x\in \mathbb B_r\} \subseteq \R ^d.
	\]
	
	The following lemma allows us to describe vertices of the graph of $f_U$  in terms of the gradient of $f$ at a finite number of points.
	
	\begin{lemma}\label{finite}
		Let $f\in C^1(\mathbb B_r)$ and $U \in O(d)$ 
		and assume that $V_{\nabla f} = \spa \{b_1,\ldots,b_N\}$.
		Then $\partial_{j}f_U=0$ if and only if $(U^\tT b_n)_j=0$ for all $n\in [N]$. 
	\end{lemma}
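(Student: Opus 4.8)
The plan is to unwind the definition of $\partial_j f_U$, exploit the orthogonal invariance of $\mathbb B_r$ to translate a statement about $f_U$ into one about $f$ itself, and then invoke the elementary fact that orthogonality to a set is the same as orthogonality to its span.

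First I would write $U = (u_1,\ldots,u_d)$ with columns $u_j$ and recall from \eqref{der} (equivalently \eqref{eq:di1}) that the chain rule gives $\partial_j f_U(x) = u_j^\tT \nabla f(Ux)$ for all $x \in \mathbb B_r$. Hence the function $\partial_j f_U$ vanishes identically on $\mathbb B_r$ if and only if $u_j^\tT \nabla f(Ux) = 0$ for every $x \in \mathbb B_r$. Since $U \in O(d)$ maps $\mathbb B_r$ bijectively onto itself, the substitution $y = Ux$ shows that this is equivalent to $u_j^\tT \nabla f(y) = 0$ for all $y \in \mathbb B_r$, i.e.\ to $u_j \perp \nabla f(y)$ for every $y \in \mathbb B_r$.

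Next I would observe that a vector is orthogonal to every element of a set precisely when it is orthogonal to the linear hull of that set. Applied to $V_{\nabla f} = \spa\{\nabla f(y) : y \in \mathbb B_r\} = \spa\{b_1,\ldots,b_N\}$, the condition $u_j \perp \nabla f(y)$ for all $y$ is thus equivalent to $u_j \perp b_n$, i.e.\ $u_j^\tT b_n = 0$, for all $n \in [N]$. Finally, the $j$-th entry of the vector $U^\tT b_n$ equals $(U^\tT b_n)_j = u_j^\tT b_n$, so this last condition reads $(U^\tT b_n)_j = 0$ for all $n \in [N]$, and the chain of equivalences is closed.

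There is no genuinely hard step here; the only points that deserve a line of justification are the orthogonal change of variables $y = Ux$, which rests on the invariance of $\mathbb B_r$ under $O(d)$ already noted after the definition of $f_U$, and the passage from the (a priori infinite) family $\{\nabla f(y) : y \in \mathbb B_r\}$ to the finite basis $\{b_1,\ldots,b_N\}$, which becomes immediate once one works with the span $V_{\nabla f}$.
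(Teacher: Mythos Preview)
Your proof is correct and follows the same line as the paper's: use the chain-rule identity $\partial_j f_U = u_j^\tT \nabla f(U\cdot)$, pass to the span $V_{\nabla f}$, and reduce to the finite generating set $\{b_1,\ldots,b_N\}$. The only difference is cosmetic: you spell out the orthogonal substitution $y=Ux$ and the ``orthogonality to a set equals orthogonality to its span'' step explicitly, whereas the paper compresses both into a single sentence.
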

	
	\begin{proof} 
		By \eqref{eq:di1}, we have that
		$\partial_{j} f_U=0$
		if and only if
		$u_j^\tT v = 0$ for all $b\in V_{\nabla f}$. This is the case
		if and only if
		$u_j^\tT b_n = (U^\tT b_n)_j = 0$ for all $n \in [N]$.
	\end{proof}
	
	By the following proposition, the vertex minimizing transform in \eqref{eq_v}
	can be obtained via singular value decomposition.
	
	\begin{proposition}\label{grad:svd}
		For $x^{(n)} \in \mathbb B_r$, $n \in [N]$, let
		$$B \coloneqq  \left(\nabla f(x^{(1)}),\ldots ,\nabla f(x^{(N)})\right) = (b_1, \ldots,b_N) \in \R^{d,N}$$ 
		such that 
		$V_{\nabla f} = \spa \{ b_1, \ldots,b_N\}$. 
		Then the left singular matrix $U \in O(d)$ of $B$ is a minimizer of \eqref{eq_v}. 
	\end{proposition}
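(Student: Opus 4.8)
The plan is to use Lemma~\ref{finite} to translate the vertex count of $\mathcal G(f_U)$ into a purely linear-algebraic quantity, namely the number of rows of $U^\tT$ that are \emph{not} orthogonal to the column space of $B$, and then to show that the singular value decomposition makes this number as small as possible. Concretely, write $B = U \Sigma W^\tT$ for the (full) SVD with $U \in O(d)$, $W \in O(N)$ and $\Sigma \in \R^{d,N}$ having exactly $r_0 \coloneqq \rank B$ nonzero singular values. Since $\spa\{b_1,\ldots,b_N\} = V_{\nabla f}$ is the range of $B$, which equals the span of the first $r_0$ columns of $U$, we have $(U^\tT b_n)_j = 0$ for all $n$ precisely when $j > r_0$. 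By Lemma~\ref{finite}, this means $\partial_j f_U = 0$ exactly for $j \in \{r_0+1,\ldots,d\}$, so $|\mathcal V(f_U)| = r_0 = \rank B = \dim V_{\nabla f}$ for this particular choice of $U$.

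Next I would establish the matching lower bound: for \emph{every} $U \in O(d)$ one has $|\mathcal V(f_U)| \ge \dim V_{\nabla f}$. The argument is that the nonzero rows of $U^\tT$ restricted to $V_{\nabla f}$ must span $U^\tT V_{\nabla f}$ as a whole: if $j \notin \mathcal V(f_U)$, then by Lemma~\ref{finite} the $j$-th coordinate functional $e_j^\tT$ vanishes on $U^\tT V_{\nabla f}$, i.e.\ $U^\tT V_{\nabla f}$ is contained in the coordinate subspace $\spa\{e_i : i \in \mathcal V(f_U)\}$. Since $U^\tT$ is invertible, $\dim U^\tT V_{\nabla f} = \dim V_{\nabla f}$, and a subspace of dimension $\dim V_{\nabla f}$ cannot be contained in a coordinate subspace of dimension $|\mathcal V(f_U)|$ unless $|\mathcal V(f_U)| \ge \dim V_{\nabla f}$. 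Combining the two bounds shows the left singular matrix attains the minimum in \eqref{eq_v}.

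I expect the only genuinely delicate point to be the careful handling of the relationship between "the span of the first $r_0$ columns of $U$" and "the range of $B$," including the degenerate cases where $\rank B < \min(d,N)$ or where some $b_n$ are linearly dependent; the hypothesis $V_{\nabla f} = \spa\{b_1,\ldots,b_N\}$ is exactly what is needed to guarantee $\dim V_{\nabla f} = \rank B$, so the statement should be invoked explicitly at that juncture. One should also note that $V_{\nabla f}$ is automatically finite-dimensional (being a subspace of $\R^d$), so the hypothesis of Lemma~\ref{finite} that it has a finite spanning set is harmless, and the set $\{b_1,\ldots,b_N\}$ furnished in the proposition serves as such a spanning set. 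Everything else is a routine unwinding of definitions, so the write-up can be kept short.
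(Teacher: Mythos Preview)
Your proposal is correct and follows essentially the same approach as the paper: both translate vertex minimization into maximizing the number of columns of $U$ orthogonal to the range of $B$, and then observe that this is achieved by the left singular matrix of $B$. The paper's proof is much terser---it simply asserts that ``$U^\tT B$ has the largest number of zero rows \ldots\ exactly given by a left singular matrix of $B$'' without spelling out the matching lower bound $|\mathcal V(f_U)| \ge \dim V_{\nabla f}$ that you carefully supply via the dimension argument; your version is thus a more complete write-up of the same idea.
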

	
	\begin{proof}
		By \eqref{eq:di1}, we have that $U \in O(d)$ is a minimizer of \eqref{eq_v} if and only if $U^\tT B$ has the largest number of zero rows. 
		This is, if and only of $U$ contains the largest number of columns which are orthogonal to all columns of $B$.
		This is exactly given by a left singular matrix of $B$.
	\end{proof}
	
	Once we have found a minimizer $U_{\mathcal V}$ by an SVD of $B$, say wlog.
	$$
	U_{\mathcal V} = 
	(\underbrace{u_1,\ldots,u_{d_1}}_{U_1}, 
	\underbrace{u_{d_1+1} \ldots u_d}_{ U_2}) \in O(d)
	$$ 
	such that 
	$\spa\{u_1,\ldots,u_{d_1}\} = \spa \{b_1,\ldots,b_N\}$ and $U_2^\tT B  = 0$,
	we know that $f_{U_{\mathcal V}}$ does only depend on the first $d_1$
	components, so that we can restrict our attention to
	$$
	g(\underbrace{x_1,\ldots,x_{d_1}}_{x_{[d_1]}}) \coloneqq f_{U_{\mathcal V}}(x_1,\ldots,x_{d_1},0\ldots,0) =  f_{U_{\mathcal V}}(x_1,\ldots,x_{d}).
	$$
	It follows immediately that 
	$$
	\nabla^2 f_{U_{\mathcal V}} (x) = 
	\begin{pmatrix}
		H(x_{[d_1]})&0\\
		0&0
	\end{pmatrix}
	\quad \text{and} \quad
	H(x_{[d_1]}) \coloneqq \nabla^2 g(x_{[d_1]})\in \R^{d_1 \times d_1}.
	$$
	Further, we have by \eqref{der} that
	$$
	\nabla^2 f_{U_{\mathcal V}} (U_{\mathcal V}^\tT x) = 
	U_{\mathcal V}^\tT \nabla^2 f (x) U_{\mathcal V} =
	\begin{pmatrix}
		H\left((U_{\mathcal V}^\tT x)_{[d_1]} \right)&0\\
		0&0
	\end{pmatrix}.
	$$
	Thus, given $\nabla^2 f (x^{(n)})$, $n \in [N]$, 
	we obtain the values $\nabla^2 g\left((U_{\mathcal V}^\tT x^{(n)})_{[d_1]}\right)$ by
	\begin{equation}\label{edge_trafo}
		U_{\mathcal V}^\tT \nabla^2 f (x^{(n)}) U_{\mathcal V} 
		=
		\begin{pmatrix}
			\nabla^2 g\left( (U_{\mathcal V}^\tT x^{(n)})_{[d_1]}\right)&0\\
			0&0
		\end{pmatrix}
		.
	\end{equation}
	
	\begin{remark}\label{u_v:unique}
		The minimizing matrix $U_{\mathcal V}$   
		is not unique. However, it follows immediately
		that for any other left singular matrix $U$
		of $B$ we would have
		$$
		\nabla^2 f_{U} (x) = 
		P^\tT \begin{pmatrix}
			W^\tT H(x_{[d_1]}) W&0\\
			0&0
		\end{pmatrix} P
		$$
		with some $W \in O(d_1)$ and a permutation matrix $P$.
		In particular, the choice of the singular matrix does not influence the results in the next subsection.
	\end{remark}
	%--------------------------------------------------------------
	\subsection{Edge minimization}\label{ev} 
	%--------------------------------------------------------------
	Next, we are interested in the minimization problem \eqref{min_e} taking only the $d_1$ relevant variables 
	from the previous subsection into account.
	For simplicity of notation, we reset  $d_1 \to d$ and $(U_{\mathcal V}^\tT x^{(n)})_{[d_1]} \to x^{(n)}$, but keep the $g$. 
	By the previous section, we can assume  that $\nabla^2 g( x^{(n)} )$, $n \in [N]$ are given.
	By \eqref{eq: graph_v}, the edge minimization problem is equivalent to the fact that 
	\begin{equation}\label{eq:di1}
		\partial_{i,j} g_U = \partial_{j,i} g_U
		=
		u_i^\tT \nabla^2 g (U \cdot) u_j = 0
	\end{equation}
	holds true for the largest number of pairs $\{i,j\}$.
	Let 
	\[
	V_{\nabla^2 g} \coloneqq \spa\{\nabla^2 g(x) :x\in \mathbb B_r\} .
	\] 
	Clearly, the Hessians of $g$ at $x \in \mathbb R^{d}$ belong to the space 
	of symmetric $d \times d$ has therefore a spectral (eigenvalue) decomposition.
	Then, similarly as in Lemma \ref{finite}, we observe the following relation.
	%----------------------------------
	\begin{lemma}\label{finite Hessian}
		Let $g\in C^2(\mathbb B_r)$ and $U\in O(d)$ and assume that
		$
		V_{\nabla^2 g} = \spa\{H_1,\ldots,H_N\}
		$.
		Then $\partial_{i,j} g_U = 0$ if and only if $(U^T H_n U)_{ij}=0$ for all $n \in [N]$.
	\end{lemma}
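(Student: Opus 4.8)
The plan is to mirror the proof of Lemma~\ref{finite}, replacing the gradient by the Hessian and the directional derivative by the mixed second-order derivative. The starting point is the chain-rule identity $\nabla^2 g_U = U^\tT \nabla^2 g(U\,\cdot)\, U$ from \eqref{der}: reading off the $(i,j)$-entry gives $\partial_{i,j} g_U = u_i^\tT\, \nabla^2 g(U\,\cdot)\, u_j$. Hence the statement $\partial_{i,j} g_U = 0$, understood as the vanishing of this function on all of $\mathbb B_r$, is equivalent to $u_i^\tT\, \nabla^2 g(Ux)\, u_j = 0$ for every $x \in \mathbb B_r$.

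Next I would exploit that $\mathbb B_r$ is invariant under the orthogonal action, so $\{Ux : x \in \mathbb B_r\} = \mathbb B_r$, and the previous condition becomes: $u_i^\tT\, H\, u_j = 0$ for every $H$ in the set $\{\nabla^2 g(y) : y \in \mathbb B_r\}$. Since $H \mapsto u_i^\tT H u_j$ is a linear functional on the space of symmetric $d\times d$ matrices, it annihilates this set if and only if it annihilates its span $V_{\nabla^2 g}$, which by linearity holds if and only if it annihilates the spanning family, i.e.\ $u_i^\tT H_n u_j = 0$ for all $n \in [N]$.

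To conclude, write $u_k = U e_k$ with $e_k$ the standard basis vectors; then $u_i^\tT H_n u_j = e_i^\tT U^\tT H_n U e_j = (U^\tT H_n U)_{ij}$, which is exactly the asserted form. The symmetry of the $H_n$ is compatible with $\partial_{i,j} g_U = \partial_{j,i} g_U$, since $(U^\tT H_n U)_{ij} = (U^\tT H_n U)_{ji}$.

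I do not expect a genuine obstacle here: the only steps needing a little care are the transition from ``vanishing pointwise on $\mathbb B_r$'' to ``vanishing on the finite family $H_1,\dots,H_N$'', which combines the orthogonal invariance of the ball with linearity of $H\mapsto u_i^\tT H u_j$, and the bookkeeping identity $u_i^\tT H_n u_j = (U^\tT H_n U)_{ij}$. For the finite-sample version used subsequently — with $H_n$ replaced by the given Hessian samples $\nabla^2 g(x^{(n)})$ — one argues exactly as in the paragraph following Proposition~\ref{grad:svd}, under the assumption that these samples span $V_{\nabla^2 g}$.
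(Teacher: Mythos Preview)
Your proof is correct and follows exactly the route the paper intends: it states the lemma without proof, noting only that it is obtained ``similarly as in Lemma~\ref{finite}'', and your argument is precisely that analogue, combining the chain-rule identity \eqref{der}, the orthogonal invariance of $\mathbb B_r$, and the linearity of $H \mapsto u_i^\tT H u_j$.
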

	
	Now we proceed in two steps.
	
	\subsubsection{Finest connected component decomposition}
	We start with the definition of graphs with ,,finest connected components''.
	
	\begin{definition}[Graph with finest components]\label{preorder}
		For $d \in \N$, we denote by $\mathcal{N}_d$ 
		the set of all descending sequences 
		$a = (a_1,\ldots,a_{d_a})\in \N_{>0}^{k_a}$ satisfying $\sum_{i=1}^{d_a}a_i=d$.
		On $\mathcal{N}_d$ we introduce a preordering by 
		$a \preceq b$ if there exists a partition $\cup_i I_i=[d_a]$ such that
		for all $i=1,\ldots,d_b$ it holds
		\[
		b_i = \sum_{j \in I_i} a_j.
		\]
		On the set $G_d$ of all graphs with $d$ vertices, we define a function $\phi:G_d \to \mathcal{N}_d$ as follows: for a graph $\mathcal G \in G_d$,
		let $\mathcal G = \cup_{i=1}^{K} \mathcal G_i$ be its finest decomposition into disjoint connected components, i.e., the $\mathcal G_i$ cannot be further decomposed into disjoint connected components, and define 
		$\phi(\mathcal G) \coloneqq (|\VV(\mathcal G_1)|,\ldots,|\VV(\mathcal G_{K}|)\in\mathcal{N}_d$. 
		Then a preordering on $G_d$
		is given 
		by $\mathcal G \preceq \tilde {\mathcal G}$ if and only if 
		$\phi(\mathcal G)\preceq \phi(\tilde {\mathcal G})$.
		Now we consider the subset
		$\mathcal S \subset G_d$
		and define the set of \textit{graphs with finest connected components} in $\mathcal S$
		to be its minimal elements with respect to $\preceq$.
	\end{definition}

	We are interested in the minimal elements of
	$$\{\mathcal G(g_U): U \in O(d)\} \subset G_d,$$
	i.e., we are asking for $U \in O(d)$ such that $\mathcal G(g_U)$ is a graph with finest connected components.
	Assuming that 
	$$V_{\nabla^2 g} = \spa\{H_n \coloneqq \nabla^2 g( x^{(n)} ): n \in [N]\},$$
	we see by Definition \ref{def:graph}, that this is equivalent to
	determining $U \in O(d)$ such that $H_n$, $n \in [N]$ admit the finest joint
	block diagonalization.
	
	\begin{definition}[Finest joint block diagonalization.]
		For $H_n \in \R^{d \times d}$, $n \in [N]$, let 
		\begin{equation}\label{eq: finest block diag}
			U^T H_n U = \mathrm{diag}(H_n^{U,1},\ldots, H_n^{U,K})
		\end{equation}
		denote a joint block diagonalization of the $H_n$, $n \in [N]$ into $K$ blocks $H_n^{U,k}\in \R^{d^U_k \times d^U_k}$ which cannot be further splitted.  
		Define
		$\phi:O(d)\to \mathcal{N}_d$ by
		$\phi(U)\coloneqq d_U=(d^U_1,\ldots,d_K^U)$. Then  $U\preceq \tilde{U}$ if and only if $\phi(U)\preceq \phi(\tilde{U})$ is a preordering on $O(d)$. We say that \eqref{eq: finest block diag} is the \textit{finest joint block diagonalization} if $U$ is minimal with respect to $\preceq$.
	\end{definition}
	
	\begin{remark} In \cite{blockdiag} the term finest joint block diagonalization is defined in terms of matrix $*$-algebras. It is shown Remark \ref{definitions:equiv} that both definitions are equivalent.
	\end{remark}
	
	To find such a finest block diagonalization, we apply a technique based on the following observation.
	
	\begin{proposition}[Joint block diagonalization]
		For   $c \in \mathbb R^N$ randomly sampled from the uniform distribution on the unit sphere $\mathbb S^{N-1} \in \R^N$, let 
		\begin{equation} \label{joint}
			H\coloneqq \sum_{n=1}^N c_n H_n.
		\end{equation}
		Let  $U \in O(d)$ be a matrix that diagonalizes  $H$.
		Then, with probability one, $U$ provides a finest joint block diagonalization of the $H_n$, $n \in [N]$. More precisely, 
		the set of all randomly sampled $c\in \mathbb S^{N-1}$ from the uniform distribution such that there exists $U \in O(d)$ that diagonalizes $H$
		but does not provides to a finest block diagonalization has measure zero.
	\end{proposition}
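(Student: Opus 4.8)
The plan is to turn the statement into a genericity statement: the set of ''bad'' parameters $c$ will be contained in the zero set of a polynomial in $c$ that is not identically zero, and such a set meets $\mathbb{S}^{N-1}$ in surface measure zero.

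I would first fix an orthogonal matrix $U_0$ realizing a finest joint block diagonalization of $H_1,\ldots,H_N$; its existence follows by recursively splitting $\R^d$ into orthogonal subspaces invariant under all $H_n$ until no block can be split further (this is the recursion behind the matrix $*$-algebra description recalled in Remark~\ref{definitions:equiv}). Passing to the coordinates given by $U_0$, we have $U_0^\tT H_n U_0 = \mathrm{diag}(H_n^1,\ldots,H_n^K)$ and hence $U_0^\tT H U_0 = \mathrm{diag}(H^1,\ldots,H^K)$ with $H^k = \sum_{n} c_n H_n^k$. Since $H$ and $U_0^\tT H U_0$ are orthogonally similar, it suffices to analyze the eigenstructure of each block $H^k$ as a function of $c$, together with the way the block spectra interact.

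The key observation is that a matrix $U$ diagonalizing $H$ fails to provide a finest joint block diagonalization only if one of the following occurs: (a) two blocks $H^k$, $H^{k'}$ that are not forced to do so share an eigenvalue, so that an eigenspace of $H$ crosses blocks; or (b) within some block $H^k$ an eigenvalue has multiplicity strictly larger than the value forced by the $*$-algebra type of that block. Indeed, if neither happens then the spectra of distinct blocks are disjoint, so every eigenspace of $H$ lies inside a single $U_0$-block; hence $U$ maps each $U_0$-block onto itself and merely performs an orthogonal change of basis inside it, and since each block is non-splittable its interaction pattern stays connected, so $\phi(U)=\phi(U_0)$, which is $\preceq$-minimal. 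Events (a) and (b) are each cut out by an explicit polynomial in $c$: (a) by the resultant of the characteristic polynomials of $H^k$ and $H^{k'}$, and (b) by the discriminant of the characteristic polynomial of $H^k$ after dividing out the forced repeated factor. Taking the product $p(c)$ of all these resultants and discriminants, the bad set is contained in $\{p=0\}$, and one is done once $p\not\equiv 0$, since then $\{p=0\}\cap\mathbb{S}^{N-1}$ has measure zero (e.g.\ $p$ restricted to the sphere is real-analytic and $\not\equiv0$).

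The main work, and the main obstacle, is to prove $p\not\equiv0$, i.e.\ to exhibit one $c$ at which all blocks attain their minimal multiplicities and have pairwise disjoint spectra. For the discriminant factors one uses that, block $k$ being non-splittable, the algebra commuting with $\{H_n^k\}$ is a division algebra ($\mathbb{R}$, $\mathbb{C}$ or $\mathbb{H}$), which dictates the forced multiplicity as its real dimension, and that a generic element of the linear span of the $H_n^k$ attains exactly this multiplicity; the three types must be handled separately. For the resultant factors one separates the spectra of two blocks that are not $*$-isomorphic, e.g.\ by first choosing $c$ so that one block has simple spectrum and then perturbing. Finally, one must address the (harmless but notationally delicate) point that when several $*$-isomorphic blocks occur the finest joint block diagonalization is not unique, so the eigenbasis $U$ of $H$ need not coincide with $U_0$; one argues instead that it yields, after a permutation, a block-diagonal form with the same block sizes as $U_0$. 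Equivalently, one may phrase the hypotheses so as to exclude repeated isomorphic blocks of size $\ge 2$, a situation that does not occur for generic Hessian data.
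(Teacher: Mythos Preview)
The paper does not supply its own proof; it simply refers the reader to \cite[Proposition~3]{murota2010numerical}. Your genericity strategy---confining the bad parameters $c$ to the zero set of a nontrivial polynomial built from discriminants and resultants of the block characteristic polynomials---is the standard approach and is in the spirit of that reference.

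There is, however, a genuine gap in your treatment of repeated $*$-isomorphic irreducible blocks, and the issue is not merely ``notationally delicate''. Your assertion that in this case an eigenbasis $U$ of $H$ still ``yields, after a permutation, a block-diagonal form with the same block sizes as $U_0$'' is false. Take $d=4$, $N=2$, and $H_n=\diag(A_n,A_n)$ with $A_1,A_2\in\R^{2\times2}$ symmetric and non-commuting. The finest joint block diagonalization has two blocks of size~$2$. For every $c$ one has $H=\diag(C,C)$ with $C=c_1A_1+c_2A_2$; each eigenvalue of $H$ has multiplicity~$2$, with eigenspace $\{(tv,sv):t,s\in\R\}$ for the corresponding eigenvector $v$ of $C$. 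Rotating the orthonormal bases of the two eigenspaces by \emph{different} angles produces a diagonalizing $U$ for which the off-diagonal support of $U^\tT H_1 U$ is the complete bipartite graph $K_{2,2}$, hence connected: this $U$ gives a single block of size~$4$. Thus for \emph{every} $c$ there exists a diagonalizing $U$ that is not finest, so the bad set equals all of $\mathbb S^{N-1}$ and the proposition, read literally as ``for a.e.\ $c$ every diagonalizing $U$ works'', fails here. Your escape clause---excluding repeated isomorphic blocks of size $\geq 2$---is therefore not an equivalent rephrasing but a genuine additional hypothesis; once it is imposed (so that your resultant factors are not identically zero), your argument goes through, and in the present paper the restriction is harmless because generic families of Hessians do not produce repeated irreducible components.
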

	
	For a proof see \cite[Proposition 3]{murota2010numerical}.
	By the proposition, we can just 
	look for the spectral decomposition of
	a matrix $H$ of the form \eqref{joint}.
	However, this strategy, is susceptible to noise. 
	Therefore, Maehara and Murota \cite{blockdiag}  suggested an extension using the space of matrices that commute with the $H_n$, $n \in [N]$.
	
	\begin{proposition}{\cite[Proposition 3.8]{blockdiag}} \label{genau}
		Let  $A_1,\ldots,A_M$ 
		be a basis of the matrix space
		\begin{equation}\label{commutant:algebra}
			\{ A \in  M_d(\R): [A, H_n] := A H_n - H_n A = 0 \text{ for all } n \in [N] \}.
		\end{equation}
		For a randomly drawn $c \in \mathbb S^{M-1}$ from the uniform distribution, let
		$U \in O(d)$ diagonalize 
		\begin{equation} \label{joint_a}
			A \coloneqq \sum_{m=1}^M c_m A_m.
		\end{equation}
		Then, with probability one $U$ provides a finest block diagonalization of $H_n$, $n \in [N]$.
	\end{proposition}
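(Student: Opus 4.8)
The plan is to reduce the claim to the already-stated Joint Block Diagonalization proposition (the one built on \eqref{joint}) by understanding the relationship between the commutant algebra in \eqref{commutant:algebra} and the finest joint block structure of the $H_n$. First I would fix any $U_0 \in O(d)$ realizing the finest joint block diagonalization, so that $U_0^\tT H_n U_0 = \diag(H_n^{1},\ldots,H_n^{K})$ with irreducible (non-further-splittable) block pattern. The key structural fact I would invoke is that, after this conjugation, the commutant $\mathcal{C} \coloneqq \{A : [A,H_n]=0 \ \forall n\}$ becomes block diagonal with the same block partition, and on each block it consists of matrices commuting with the whole family $\{H_n^{k}\}_n$; by the irreducibility of the block (i.e.\ the $H_n^k$ generate a matrix $*$-algebra with no nontrivial invariant subspace beyond scalars, which is exactly what ``cannot be further split'' encodes via Remark \ref{definitions:equiv}) Schur's lemma forces the commutant on each block to be either $\R I$ or, if several blocks are mutually equivalent, a product of matrix algebras acting by intertwining. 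The upshot I want is: a generic element $A = \sum_m c_m A_m$ of $\mathcal{C}$ has, within each isotypic component, distinct eigenvalues on the ``multiplicity'' factors and is scalar on the ``irreducible'' factors, so that the eigenspaces of $A$ refine exactly to the finest joint block subspaces.

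Concretely, the steps would be: (1) conjugate everything by $U_0$ and describe $\mathcal{C}$ in the resulting block coordinates via Schur's lemma / Wedderburn theory for the $*$-algebra generated by the $H_n$; (2) show that the common refinement of the eigenspace decomposition of any $A \in \mathcal{C}$ with the block structure cannot be coarser than the finest block structure — more precisely, any invariant subspace common to all $H_n$ is also $A$-invariant, hence $A$ respects the finest decomposition, so diagonalizing $A$ can only refine it; (3) conversely show that for $c$ outside a measure-zero set the eigenspaces of $A = \sum_m c_m A_m$ are no coarser than the finest blocks, because on each isotypic component the generic $A$ acts with pairwise-distinct eigenvalues across the inequivalent/multiplicity directions — this is the genericity argument, identical in spirit to the one behind \eqref{joint}, using that the ``bad'' set where two eigenvalues coincide is the zero set of a nontrivial polynomial in $c$; (4) conclude that a $U$ diagonalizing such a generic $A$ yields $U^\tT H_n U$ block diagonal with exactly the finest block partition.

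The main obstacle I anticipate is step (3): making precise and rigorous that the finest block structure is genuinely \emph{achieved} (not merely refined) by a generic $A \in \mathcal{C}$, rather than being coarsened. This requires the representation-theoretic input that the commutant of the $*$-algebra generated by $\{H_n\}$ is, by Artin--Wedderburn, a direct sum of full matrix algebras $\bigoplus_\ell M_{m_\ell}(\R)$ (or $\C$, $\mathbb H$ in the real case — a point one must be slightly careful about), and that a generic element of such an algebra is diagonalizable with simple spectrum on each factor; one then checks that the joint $\{H_n\}$-invariant subspaces are precisely the $A$-eigenspaces for generic $A$. The remaining ingredients — that ``finest block'' $=$ ``no nontrivial common invariant subspace'' (Remark \ref{definitions:equiv}), that $\mathcal{C}$ is conjugation-covariant so passing to $U_0$-coordinates is harmless, and the measure-zero genericity bookkeeping — are routine once this algebraic picture is in place. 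I would cite \cite[Proposition 3.8]{blockdiag} for the detailed verification and present here only the conceptual skeleton above.
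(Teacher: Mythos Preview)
The paper does not supply its own proof of this proposition; it is stated with the citation \cite[Proposition 3.8]{blockdiag} in the header and used as a black box, so there is nothing in the paper to compare against beyond the reference itself.

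Your sketch is correct in outline and is essentially the argument one finds in \cite{blockdiag} and \cite{murota2010numerical}: pass to a finest block form, identify the commutant \eqref{commutant:algebra} via Schur/Artin--Wedderburn as a product of full matrix algebras over $\R$, $\C$, or $\mathbb{H}$ (your caution about the real division algebras is well placed), observe that a generic element of such a product has simple spectrum on each factor so that its eigenspace decomposition separates all irreducible summands, and handle the ``bad'' coefficients $c$ as the zero locus of a nontrivial polynomial. One small clarification on your step (2): the statement you want is not that diagonalizing $A$ \emph{refines} the finest block structure, but that every eigenspace of $A$ is jointly $H_n$-invariant (because $[A,H_n]=0$), so the eigenspace decomposition of $A$ is \emph{some} joint block diagonalization; step (3) then shows it is generically the finest one. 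With that phrasing adjusted, your plan matches the cited proof.
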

	
	An important observation in \cite{blockdiag} 
	is that if the condition in \eqref{commutant:algebra} 
	is relaxed to 
	\begin{equation} \label{relax}
		\|[A, H_n]\|_{\text{F}} \leq \delta \quad \text{for all } n \in [N]
	\end{equation}
	with small $\delta > 0$, 
	then a matrix $U\in O(d)$ that diagonalizes $A$, also 
	jointly block diagonalizes the
	$H_n$, $n \in [N]$ up to a small error.
	Here $\| \cdot \|_{\text{F}}$ denotes the Frobenius norm.
	More precisely,
	consider the linear operators $T_n:\R^{d \times d}\to \R^{d \times d}$ with
	$A \mapsto [A,H_n]$ and set 
	\begin{equation} \label{T}
		T \coloneqq \sum_{n=1}^N T_n^\tT T_n.
	\end{equation}
	With the columnwise vectorization operator 
	$\text{vec}: R^{d \times d} \to \R^{d^2}$
	and the Kronecker product $\otimes$ of matrices we have that 
	$$
	\text{vec} \left( T_n (A) \right)
	=  \zb T_n \text{vec}(A) 
	, \quad \zb T_n \coloneqq H_n^\tT \otimes I_d - I_d \otimes H_n
	$$
	and 
	$ \|T_n (A)\|_{\text{F}} = \| \zb T_n \text{vec}(A) \|_2$.
	Let $V_1,\ldots,V_K \in \R^{d \times d}$ be orthonormal eigenvectors of $T$ with eigenvalues $\lambda_k$ smaller than $\delta^2$.
	Then, we obtain for
	$$
	V \coloneqq \sum_{k=1}^K c_k V_k
	$$
	and $v \coloneqq \text{vec} (V)$ that
	\begin{align}
		\sum_{n=1}^N\|[V^\tT,H_n]\|_F^2
		&=\sum_{n=1}^N\|[V,H_n]\|_F^2
		=
		\sum_{n=1}^N\|T_n(V)\|_F^2
		=
		\sum_{n=1}^N\|\zb T_n v\|_2^2
		\\
		&=
		\sum_{n=1}^N \langle \zb T^\tT_n \zb T_n v,v\rangle 
		= 
		\langle \zb T v ,v \rangle 
		=
		\sum_{k=1}^K c^2_k \lambda_k 
		\leq
		\delta^2
		\sum_{k=1}^K c^2_k.
	\end{align}
	Thus, for any $c\in \mathbb S^{K-1}$, the matrix 
	$\tfrac{1}{2}(V+V^\tT)$ 
	fulfills \eqref{relax}.
	Then we know by \cite[Lemma 4.1]{blockdiag}, for 
	$U\in O(n)$ satisfying 
	$\tfrac{1}{2}U^\tT (V+V^\tT) U =\diag(\lambda_1,\ldots,\lambda_d)$,  
	that
	\[
	(U^\tT H_n U)_{ij}|\lambda_i-\lambda_j|\leq \delta.
	\]
	Consequently, $U^\tT H_n U$ has an almost block diagonal structure 
	and the blocks correspond to different eigenvalues. 
	The resulting error-controlled version of the block diagonalization is stated as Algorithm \ref{alg: block diag}.
	
	\begin{algorithm}
		\begin{algorithmic}
			\State\textbf{Input:} 
			Hessian matrices $H_n$, $n \in [N]$, 
			error tolerance $\delta>0$.
			\State Find an orthonormal basis $V_1,\ldots,V_K$ corresponding to eigenvalues smaller than $\delta^2$ of $T$ in \eqref{T}.
			
			\State Sample $c \in \mathbb S^{K-1}$ randomly from the uniform distribution on $\mathbb S^{K-1}$
			and set $V \coloneqq \sum_{k=1}^K c_k V_k$.
			
			\State Compute $U\in O(d)$ that diagonalizes $\tfrac12 (V+V^\tT)$. 
			
			\State \textbf{Output: $U$} 
		\end{algorithmic}
		\caption{Error-controlled block diagonalization }\label{alg: block diag}
	\end{algorithm}
	
	While we know by Proposition \ref{genau} 
	that the above algorithm is guaranteed to find the finest block diagonalization with probability $1$ for $\delta=0$, 
	we did not find a uniqueness statement in the literature. 
	The following theorem contains the desired result.
	Its proof requires a deeper look into theory of matrix-$*$ algebras and can be found in Appendix \ref{appendix:block}.
	
	\begin{theorem}\label{uniqueness:block}
		Let $U_1,U_2$ correspond to finest joint block diagonalizations of $H_n$, $n \in [N]$ with block sizes $d_1^{U_1},\ldots,d_{K_1}^{U_1}$ and $d_1^{U_2},\ldots,d_{K_2}^{U_2}$ and
		$$
		U_1^T H_n U_1= {\text{blockdiag}}(H_n^{U_1,1},\ldots,H_{n}^{U_1,K_1}) , 
		\quad 
		U_2^T H_n U_2= {\text{blockdiag}}(H_n^{U_2,1},\ldots,H_{n}^{U_2,K_2}).
		$$
		Then, it holds $K_1 = K_2 =: K$. 
		Further, there exists a permutation 
		$\sigma$ of $[K]$ and matrices $V_k \in O(d_k^{U_1})$, $k \in [K]$ 
		such that for all $k \in [K]$ and all $n \in [N]$
		we have
		\begin{itemize}
			\item[ i)] $d_k^{U_1}=d_{\sigma(k)}^{U_2}$ 
			and
			\item[ii)] $H_{n}^{U_2,\sigma(k)}=V_k^\tT H_n^{U_1,k} V_k$.
		\end{itemize}
	\end{theorem}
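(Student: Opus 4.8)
The plan is to recast the statement in the language of matrix $\ast$-algebras and then invoke the Wedderburn structure theory for semisimple algebras over $\R$. To the family $H_1,\dots,H_N$ I associate the real subalgebra $\mathcal T\subseteq M_d(\R)$ generated by $I_d$ and $H_1,\dots,H_N$. Since each $H_n$ is symmetric, $\mathcal T$ is closed under transposition, hence a $\ast$-subalgebra; in particular it is semisimple, because any element $x$ of its radical would give a symmetric, positive semidefinite, nilpotent matrix $x^\tT x$, which must vanish, forcing $x=0$. A subspace $W\subseteq\R^d$ is invariant under every $H_n$ if and only if it is a $\mathcal T$-submodule, and since $\mathcal T=\mathcal T^\tT$ the orthogonal complement of a submodule is again a submodule. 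Therefore an orthogonal $U$ realizes a joint block diagonalization of the $H_n$ with block pattern $(d_1,\dots,d_K)$ exactly when the columns of its $k$-th block span a $\mathcal T$-submodule, and that block cannot be split further precisely when this submodule is \emph{irreducible}. This is the equivalence recorded in Remark~\ref{definitions:equiv}: a finest joint block diagonalization of $H_1,\dots,H_N$ is the same as an orthogonal decomposition $\R^d=W_1\perp\dots\perp W_K$ into irreducible $\mathcal T$-submodules, with $H_n^{U,k}$ being the matrix of $H_n|_{W_k}$ in the chosen orthonormal basis of $W_k$.

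With this dictionary the theorem reduces to the assertion that any two orthogonal decompositions of $\R^d$ into irreducible $\mathcal T$-submodules carry the same list of dimensions up to reordering, and that corresponding summands are related by orthogonal intertwiners. I would prove this along classical lines. By Wedderburn's theorem for $\ast$-algebras, $\mathcal T=\bigoplus_{j=1}^{\ell}\mathcal T_j$ with $\mathcal T_j$ simple; the central primitive idempotents $e_j$ belong to $\mathcal T$ and are symmetric, so $\R^d=\bigoplus_{j=1}^{\ell}U_j$ with $U_j\coloneqq e_j\R^d$ is an orthogonal decomposition into $\mathcal T$-invariant subspaces on which $\mathcal T$ acts through $\mathcal T_j\cong M_{m_j}(\mathbb K_j)$ for a division algebra $\mathbb K_j\in\{\R,\C,\mathbb H\}$, and $U_j$ is a direct sum of $n_j$ copies of the unique simple $\mathcal T_j$-module. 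Because each $e_j$ is central, an irreducible $\mathcal T$-submodule $W$ satisfies $e_jW=W$ for exactly one $j$ and $e_{j'}W=0$ otherwise, so $W\subseteq U_j$; hence every decomposition of $\R^d$ into irreducibles refines $\bigoplus_jU_j$ and, inside $U_j$, consists of exactly $n_j$ summands each of real dimension $m_j\dim_\R\mathbb K_j$. This already forces $K_1=K_2=\sum_jn_j$ and fixes the multiset of block sizes.

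It then remains to match, for each fixed $j$, the $n_j$ irreducible summands coming from the $U_1$-decomposition with those from the $U_2$-decomposition. Any two irreducible $\mathcal T$-submodules $W,W'$ of $U_j$ are isomorphic, so by Schur's lemma there is an invertible $\mathcal T$-equivariant map $\phi\colon W\to W'$; the composition $\phi^\ast\phi$ lies in $\mathrm{End}_{\mathcal T}(W)\cong\mathbb K_j$ and is self-adjoint and positive definite, hence equals a positive real scalar, so after rescaling $\phi$ may be taken orthogonal. Performing this for each of the $n_j$ pairs (along an arbitrary bijection of the two index sets) and summing, one obtains an orthogonal map $\Phi$ of $U_j$ that commutes with every $H_n$ and sends the $U_1$-summands onto the $U_2$-summands. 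Expressing the components of $\Phi$ in the orthonormal block bases furnished by $U_1$ and $U_2$ produces orthogonal matrices $V_k$, and the equivariance $\Phi H_n=H_n\Phi$, read in these bases, is exactly $H_n^{U_2,\sigma(k)}=V_k^\tT H_n^{U_1,k}V_k$ (after replacing $V_k$ by $V_k^\tT$ if necessary), while $d_k^{U_1}=d_{\sigma(k)}^{U_2}$ since both equal $m_j\dim_\R\mathbb K_j$. Concatenating these matchings over $j=1,\dots,\ell$ yields the permutation $\sigma$ of $[K]$ together with $V_1,\dots,V_K$.

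I expect the genuine difficulty to be the real (as opposed to complex) structure theory: one must verify carefully that passing to irreducibles truly refines the central decomposition (this uses symmetry of the $e_j$) and that Schur's lemma over the possibly noncommutative $\mathbb K_j\in\{\R,\C,\mathbb H\}$ still permits normalizing intertwiners to be isometric — positivity of $\phi^\ast\phi$ together with $\mathrm{End}_{\mathcal T}(W)\cong\mathbb K_j$ is what makes this uniform across all three cases. A secondary bookkeeping point, which is exactly the content of Remark~\ref{definitions:equiv}, is that "no block can be split" in the preorder sense of the paper's definition coincides with irreducibility of the associated $\mathcal T$-module.
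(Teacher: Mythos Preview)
Your proof is correct and follows essentially the same global strategy as the paper: pass to the real matrix $\ast$-algebra generated by the $H_n$, invoke Artin--Wedderburn, and argue uniqueness up to permutation and orthogonal conjugation of blocks. The paper likewise shows (Remark~\ref{definitions:equiv}, Lemma~\ref{onto:irr}) that a finest block diagonalization corresponds to an irreducible decomposition and groups the irreducible blocks into simple pieces (Lemma~\ref{unique:simple}), then matches the simple pieces across the two decompositions via the conjugation $x\mapsto U_2^\tT U_1 x U_1^\tT U_2$ (Lemma~\ref{simple:same}).

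Where you genuinely diverge is in the endgame. The paper works on the \emph{algebra} side: once two simple blocks are matched, it applies Skolem--Noether to get an inner isomorphism by some $W\in GL$, and then upgrades $W$ to an orthogonal matrix by citing \cite[Lemma~A.4]{murota2010numerical} (Proposition~\ref{irr:v}, Corollary~\ref{simple:irred}). You instead stay on the \emph{module} side: you match irreducible submodules directly via a $\mathcal T$-intertwiner $\phi$ obtained from Schur's lemma, and normalize it to an isometry by observing that $\phi^\ast\phi\in\mathrm{End}_{\mathcal T}(W)$ is a self-adjoint positive-definite operator in a real division algebra, hence a positive real scalar (equivalently, its spectral projectors lie in a division algebra and so collapse to a single eigenvalue). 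This polar-decomposition argument is more self-contained than the paper's route through Skolem--Noether plus an external lemma, and it treats the three cases $\mathbb K_j\in\{\R,\C,\mathbb H\}$ uniformly. Your explicit use of the symmetric central idempotents $e_j$ to force every irreducible to sit inside a single isotypic component $U_j=e_j\R^d$ is also a cleaner bookkeeping device than the paper's repeated appeals to Schur at the algebra level; note that $e_j=e_j^\tT$ does require the short argument that $e(1-e^\tT)$ satisfies $f f^\tT=0$ and hence vanishes, which you flag as the point needing care.
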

	
	Note that neither $U_{\mathcal{V}}$ from \eqref{eq_v} nor $U$ obtained from the finest joint block diagonalization algorithm is unique. However, the only ambiguity is a possible conjugation of the blocks by orthogonal matrices as Remark \ref{u_v:unique} and Theorem \ref{uniqueness:block} show.

	%-------------------------------------------------
	\subsubsection{Sparse component decomposition}
	%--------------------------------------------------------------
	Having found a joint approximate finest block decomposition 
	of the Hessians $H_n$, $n \in [N]$, i.e., 
	\begin{equation} \label{eq:block}
		U^\tT H_n U = \text{blockdiag} (H_n^{U,1},\ldots,H_n^{U,K}), \quad 
		H_n^{U,k}  \in \R^{d_k^{U,k} \times d_k^{U,k}}
	\end{equation}
	it remains to to enforce the sparsity in each block.
	In particular, we can treat the blocks separately which reduces the dimension of the problem drastically.
	Therefore, we consider for an arbitrary $k \in [K]$, the $k$-th blocks 
	$B_n = H_n^{U,k}$
	in \eqref{eq:block}
	and set $d \coloneqq d_k^{U,k}$ now.
	Moreover, we agree that the diagonal elements of $B_n$, $n \in [N]$ are zero.
	The $0$-"norm" $\|B\|_0$
	of a $d \times d$ matrix $B$ is the number of its nonzero components.
	We would like to find a matrix $U \in O(d)$ that minimizes
	\begin{equation} \label{eq:l0}
		\ell(U) \coloneqq  \big\| \frac{1}{N} \sum_{n=1}^N (U^\tT B_n U)^2 \, \big \|_0
	\end{equation}
	where the square of the matrix is taken compomentwise.
	Unfortunately, it is well-known that the $0$-"norm" minimization is NP-hard. 
	Therefore, we will instead minimize the relaxed differntiable loss function
	\begin{align}\label{loss:smooth}
		\ell_\varepsilon(U) 
		= \sum_{\substack{i,j =1 \\ i\neq j} }^d \Big(\frac{1}{N}\sum_{n=1}^N (U^T B_n U)_{i,j}^2 +\varepsilon \Big)^\frac12,
		\quad
		\varepsilon>0.
	\end{align}
	The following proposition provides a sufficient condition that the relaxed version coincides with original one.
	
	\begin{proposition}\label{lemma:sufficient}
		Let $B_n \in \R^{d \times d}$, $n \in [N]$
		with zero diagonal components.
		If $U\in O(d)$ fulfills
		\[
		\ell_\varepsilon(U) = \max\Big\{\mathrm{dim}(\spa\{B_n, n \in [N] \}),\max_{n \in [N]} \rank(B_n)\Big\},
		\]
		then $U$ is a minimizer of \eqref{eq:l0}.
	\end{proposition}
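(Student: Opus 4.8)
The plan is to prove that
\[
m \;:=\; \max\bigl\{\dim\spa\{B_n : n\in[N]\},\ \max_{n\in[N]}\rank(B_n)\bigr\}
\]
is a lower bound for \(\ell(W)\) for every \(W\in O(d)\), and then to show that the hypothesis \(\ell_\varepsilon(U)=m\) forces \(\ell(U)=m\); together these imply that \(U\) is a minimizer of \eqref{eq:l0}. The first step is the structural core of the statement, whereas the second is where the relaxation \(\ell_\varepsilon\) enters and is the delicate point.

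For the lower bound, fix \(W\in O(d)\), set \(\tilde B_n:=W^\tT B_nW\) and \(S:=\bigcup_{n=1}^N\mathrm{supp}(\tilde B_n)\subseteq[d]^2\), so that \(\ell(W)=|S|\) by the definition of the entrywise square in \eqref{eq:l0}. Conjugation \(A\mapsto W^\tT A W\) is a rank-preserving linear automorphism of \(M_d(\R)\); hence \(\rank(\tilde B_n)=\rank(B_n)\) for all \(n\) and \(\dim\spa\{\tilde B_n:n\in[N]\}=\dim\spa\{B_n:n\in[N]\}\). Every \(\tilde B_n\) lies in the coordinate subspace \(\{A\in M_d(\R):\mathrm{supp}(A)\subseteq S\}\), which has dimension \(|S|\) with basis \(\{e_ie_j^\tT:(i,j)\in S\}\), so \(\dim\spa\{\tilde B_n:n\}\le|S|\). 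Writing a fixed \(\tilde B_n\) as \(\sum_{(i,j)\in\mathrm{supp}(\tilde B_n)}(\tilde B_n)_{ij}\,e_ie_j^\tT\) exhibits it as a sum of at most \(|S|\) rank-one matrices, so \(\rank(\tilde B_n)\le|S|\). Combining, \(\ell(W)=|S|\ge m\) for every \(W\in O(d)\).

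It remains to show \(\ell_\varepsilon(U)=m\) implies \(\ell(U)=m\). Write \(M:=\frac1N\sum_n(U^\tT B_nU)^2\) (entrywise square) and let \(a\) be the number of off-diagonal pairs \((i,j)\) with \(M_{ij}\neq0\); each of the remaining off-diagonal pairs contributes exactly \(\sqrt\varepsilon\) to \(\ell_\varepsilon(U)\) while each active one contributes strictly more, and by the previous step \(\mathrm{supp}(M)\) contains at least \(m\) positions. The plan is to combine the equality \(\ell_\varepsilon(U)=m\) with two facts: the strict concavity of \(t\mapsto(t+\varepsilon)^{1/2}\), which makes spreading the Hessian mass over more than the required number of positions strictly costly, and the rank/span bounds of the previous paragraph applied to each subcollection of \(\mathrm{supp}(M)\) (so that the \(m\) forced positions cannot be replaced by fewer). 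Together these pin \(\mathrm{supp}(M)\) to exactly \(m\) positions — in particular forcing the diagonal of every \(U^\tT B_nU\) to vanish — and yield \(\ell(U)=m\). The main obstacle is exactly this last step: disentangling the smooth, concave surrogate \(\ell_\varepsilon\) from the combinatorial \(0\)-norm \(\|\cdot\|_0\), i.e.\ ruling out configurations in which \(\ell_\varepsilon(U)\) attains the value \(m\) while \(\mathrm{supp}(M)\) is strictly larger; everything else is bookkeeping with the elementary rank and dimension inequalities established above.
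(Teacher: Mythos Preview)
Your first paragraph (the lower bound \(\ell(W)\ge m\) for every \(W\in O(d)\)) is correct and is exactly what the paper does: it also argues via the joint support set, bounding it below by the span dimension and by each \(\rank(B_n)\), and then uses that orthogonal conjugation preserves both invariants.

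Your second paragraph, however, leaves a genuine gap, and the concavity sketch you offer does not close it. Strict concavity of \(t\mapsto(t+\varepsilon)^{1/2}\) gives an upper bound on \(\ell_\varepsilon(U)\) when mass is spread out, not a lower bound; it does not forbid \(\ell_\varepsilon(U)=m\) with \(|\mathrm{supp}(M)|>m\). Indeed, already for \(d=2\), \(N=1\), \(B_1=\bigl(\begin{smallmatrix}0&1\\1&0\end{smallmatrix}\bigr)\) one has \(m=2\) but \(\ell_\varepsilon(I)=2\sqrt{1+\varepsilon}>2\) for every \(\varepsilon>0\); so the hypothesis \(\ell_\varepsilon(U)=m\) is not attainable in natural examples, which is a strong hint that the implication you are trying to prove is not the intended one.

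The paper does not prove this step either. In its proof it writes
\[
\ell_\varepsilon(U)=\Bigl\|\bigl(\|((M_1)_{ij},\ldots,(M_N)_{ij})\|_\infty\bigr)_{i,j=1}^d\Bigr\|_0,
\]
which is literally false for the smooth surrogate \eqref{loss:smooth}; the right-hand side is exactly the joint support count, i.e.\ \(\ell(U)\) from \eqref{eq:l0}. So the argument in the paper actually establishes the lower bound for \(\ell\), and the appearance of \(\ell_\varepsilon\) in both the hypothesis of the proposition and in the displayed equality of the proof is a typo for \(\ell\). Under the corrected hypothesis \(\ell(U)=m\), your first paragraph already finishes the proof: \(m\) is a lower bound for \(\ell\) on all of \(O(d)\), and \(U\) attains it. There is nothing further to do, and in particular no passage from \(\ell_\varepsilon\) to \(\ell\) is needed.
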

	
	\begin{proof}
		By the properties of the rank, we get  for arbitrary $M_1, \ldots, M_N$ that
		\[
		\left\|\left(\|((M_1)_{ij},\ldots, (M_N)_{ij})\|_{\infty}\right)_{i,j = 1}^d \right\|_0 
		\geq \| M_n \|_0 \ge \rank(M_n), \quad n \in [N]
		\]
		and consequently
		\[
		\left\|\left(\|((M_1)_{ij},\ldots, (M_N)_{ij})\|_{\infty}\right)_{i,j = 1}^d \right\|_0 
		\geq \max_{n \in [N]} \rank(M_n).
		\]
		Now let $\mathcal B :=\{e_{i,j}:\exists M_k \text{ such that } (M_n)_{i,j} \neq 0\}$. 
		Then, 
		$\spa(\mathcal B) \supseteq \spa\{M_n, n \in [N]\}$ 
		and 
		$\dim(\spa(\mathcal B))=\left\|\left(\|((M_1)_{ij},\ldots ,(M_N)_{ij})\|_{\infty}\right)_{i,j\in[d]^2}\right\|_0$, 
		which gives
		\[
		\left\|\left(\|((M_1)_{ij},\ldots (M_N)_{ij})\|_{\infty}\right)_{i,j\in[d]^2}\right\|_0 \geq \mathrm{dim}(\spa\{M_n, n \in [N]\}).
		\]
		Therefore, for $M_n \coloneqq U^T B_n U$ the above inequalities yield
		\begin{align}
			\ell_\varepsilon(U) 
			& = \left\| \left(\|((M_1)_{ij},\ldots (M_N)_{ij})\|_{\infty}\right)_{i,j=1}^d \right\|_0 \\ 
			& \ge \max\Big\{\mathrm{dim}(\spa\{M_n , n \in [N]\}),\max_{n \in [N]} \rank(M_n)\Big\}. 
		\end{align}
		Since $U \in O(d)$, it holds that 
		$\mathrm{dim}(\spa\{B_n, n \in [N]\})=\mathrm{dim}(\spa\{U^T B_n U, n \in [N]\})$ 
		and 
		$\rank(B_n)=\rank(U^T B_n U)$ for all $n \in [N]$.
		Thus, we obtained a constant lower bound for $\ell_{0,e}(U)$,
		\[
		\ell_\varepsilon(U) \ge \max\Big\{\mathrm{dim}(\spa\{B_n, n \in [N] \}),\max_{n \in [N]} \rank(B_n)\Big \}
		\]
		and if it is reached, $U$ has to be the global minimizer of $\ell_\varepsilon$.
	\end{proof}
	
	%--------------------------------------------------------
	\section{Minimization of the loss $\ell_\varepsilon$ over $SO(d)$}\label{sec:MAnOpt}
	%-------------------------------------------------------
	This section deals with the efficient minimization of $\ell_\varepsilon$ in \eqref{loss:smooth}.
	In  Subsection \ref{subsec41},  we  propose a gradient descent algorithm on 
	$SO(d)$ and also consider the Landing algorithm
	for a regularized version of $\ell_\varepsilon$.
	Subsection \ref{subsec42} contains convergence results.
	The efficient computation requires further a grid search to have an appropriate
	starting point for the algorithm.
	The corresponding results can be found in Appendix \ref{sec43},
	where we also have a closer look to the computational complexity.
	
	%---------------------------------------------------------
	\subsection{Gradient descent and Landing algorithm over $SO(d)$} \label{subsec41}
	If $U \in O(d)$ is a minimizer of \eqref{loss:smooth}, then
	$\text{blockdiag}(-1,I_{d-1} ) U \in SO(d)$ is a minimizer as well so that the optimization can be reduced 
	to those over $SO(d)$.
	Now, we turn to the minimization of the loss function \eqref{loss:smooth} on $SO(d)$ using gradient-based manifold optimization techniques.
	Let us start with a short overview of the basic definitions and properties related to $SO(d)$ as a manifold. For further details, we refer to \cite{ablin2022fast, boumal2023intromanifolds, hu2020brief}. Recall that $SO(d)$ is a Riemannian manifold.  For each $U \in SO(d)$ the tangent space to $SO(d)$ at $U$ is given by 
	\[
	T_U = \{ V \in \R^{d \times d}: V U^T + U V^T = 0\} = \{ A U : A \in \R^{d \times d}, A + A^T = 0 \}.
	\]
	For a function $F: \R^{d \times d} \to \R$ differentiable in the neighborhood of $SO(d)$, its Riemannian gradient $\grad F(U)$ at point $U$ is given by an orthogonal projection of $\nabla F(U)$ onto tangent space $T_U$. For $SO(d)$ specifically, it has a closed-form 
	\[
	\grad F(U) := \tfrac{1}{2} \nabla F(U) - \tfrac{1}{2} U \nabla F(U)^T U.
	\]
	We denote by $TSO(d)\coloneqq \cup_{U\in SO(d)}\{U\}\times T_U$ the tangent bundle of $SO(d)$. A retraction operator is a smooth map of manifolds $R:TSO(d)\to SO(d)$ which satisfies for all $U\in SO(d)$ that 
	\begin{itemize}
		\item $R(U,0)=U$,
		\item $DR_U(0)=\text{Id}_{T_U}$ where $R_U=R|_{\{U\}\times T_U }:T_U\to SO(d)$ and $D$ is the differential.
	\end{itemize}
	Note that the last properties ensures that for a line $\gamma(t)=tV$ in the tangent space $T_U$ we have that $\frac{d}{dt}R(U,\gamma(t))|_{t=0}=V$ i.e. a retraction is a first-order approximation of the exponential map.
	There are various choices of retraction operators for $SO(d)$ such as the exponential map, the Cayley transform, and the Polar decomposition \cite[Chapter~3.3]{hu2020brief}, see also \cite{parseval2020}. 
	In this work, we use the QR-factorization \cite{hu2020brief} based retraction operator defined as
	\begin{equation}
		\Retr(U, V) = \mathrm{QR}(U-V),
	\end{equation}
	where $\mathrm{QR}(U-V)$ denotes the orthogonal matrix of the QR-factorization of $U-V$.  
	
	Among many optimization methods on arbitrary Riemannian manifolds \cite{hu2020brief}, and $O(d)$ \cite{ablin2022fast} and $SO(d)$ \cite{Usevich.2020} in particular, we focus on the gradient descent and Landing methods.  
	Given an initial guess $U^{(0)}$ and step sizes $\{ \nu_r \}_{r \ge 0}$, the sequence $\{ U^{(r)} \}_{r \ge 0}$ constructed via Riemannian gradient descent is given by
	\begin{equation}\label{eq: Riemannian gradient descent}
		U^{(r+1)} = \Retr\left(U^{(r)}, - \nu_r \grad \ell_\varepsilon(U^{(r)})\right), \quad r \ge 0. 
	\end{equation}
	If the algorithm converges to a fixed point if $U^{(r+1)} = U^{(r)}$, the condition $\grad \ell_\varepsilon(U^{(r)}) = 0$ is satisfied. 
	Computing the retraction operator can be very time-consuming when the space dimension $d$ is large. This is why, for $O(d)$, an alternative method called the Landing algorithm was developed in ~\cite{ablin2022fast}. Instead of performing the minimization of $\ell_\varepsilon$ on $SO(d)$, it minimizes the penalized objective
	\[
	\ell_\varepsilon(U) +  \tfrac{\lambda}{4}\norm{ I_d  - U U^T}_F^2,
	\]
	over $\R^{d \times d}$ with the regularization parameter $\lambda > 0$. Note that the penalty is zero if and only if $U \in O(d)$. The Landing update step is given by
	\begin{equation}\label{eq: landing}
		U^{(r+1)}= U^{(r)} - \nu_r \left( \grad \ell_\varepsilon(U^{(r)}) +\lambda \left[ U^{(r)} (U^{(r)})^T - I_d \right] U^{(r)} \right), \quad r \ge 0,
	\end{equation}
	where the second term pushes $U^{(r+1)}$ in the direction of the manifold. Note that in general $U^{(r+1)} \notin O(d)$ and only after a certain number of iterations $U^{(r+1)}$ comes close to $O(d)$. Therefore, the fixed point $U$ of the Landing algorithm admits 
	\[
	\grad \ell(U) = - \lambda \left[ U U^T - I_d \right] U.
	\]
	If $U \in O(d)$, once again the gradient vanishes.
	
	%-----------------------------------------------------------------
	\subsection{Convergence analysis}\label{subsec42}
	In the previous subsection,  we observed that if the algorithms converge, the fixed point $U \in SO(d)$ admits $\grad(U) = 0$. Yet, it may not necessarily be global minima, unless the function $\ell_\varepsilon$ is geodesically convex \cite[Corollary 11.18]{boumal2023intromanifolds}. The latter is not true as the following theorem shows, see \cite{Shing-Tung.1974}.
	
	\begin{theorem}\label{lem:geo_convex}
		Let $F: SO(d) \to \R$ be a continuous geodesically convex function on $SO(d)$. Then, $F$ is constant.
	\end{theorem}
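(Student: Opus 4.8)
The plan is to exploit the fact that $SO(d)$ is a compact connected Riemannian manifold without boundary, so geodesics can be extended indefinitely and — crucially — closed geodesics exist through every point. A continuous function on a compact set attains its maximum; I want to show that if $F$ is geodesically convex, this maximum is attained everywhere, forcing $F$ to be constant.

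First I would recall the definition of geodesic convexity: $F$ is geodesically convex if for every geodesic $\gamma:[0,1]\to SO(d)$ we have $F(\gamma(t)) \le (1-t)F(\gamma(0)) + t F(\gamma(1))$ for all $t\in[0,1]$. The key structural input is that $SO(d)$, being compact, is geodesically complete (Hopf--Rinow), so every geodesic extends to a map $\gamma:\R \to SO(d)$, and moreover through any point $U$ and any tangent direction $V\in T_U$ there is a \emph{closed} geodesic — for instance, the one-parameter subgroup $t\mapsto U\exp(tA)$ with $A$ skew-symmetric is periodic because $\exp(tA)$ is (its eigenvalues lie on the unit circle and, restricting to a suitable rational approximation or using that $SO(d)$ is compact, one gets a genuine periodic geodesic; alternatively, one invokes that $SO(d)$ with the bi-invariant metric is a symmetric space all of whose geodesics through a point return). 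The cleanest route: pick any two points $U_0, U_1 \in SO(d)$ and a minimizing geodesic $\gamma$ joining them; because the metric is bi-invariant, the one-parameter subgroup construction shows $\gamma$ lies on a closed geodesic loop based at $U_0$.

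Next, the main argument: let $U^\ast$ be a maximizer of $F$ on $SO(d)$ and let $U$ be any other point. Take a closed geodesic $\gamma:[0,1]\to SO(d)$ with $\gamma(0)=\gamma(1)=U^\ast$ passing through $U=\gamma(t_0)$ for some $t_0\in(0,1)$. Applying geodesic convexity on $[0,t_0]$ (reparametrized) gives $F(U) \le (1-t_0)F(U^\ast) + t_0 F(U^\ast) = F(U^\ast)$, which is no information by itself — but applying convexity on the full loop $[0,1]$ at the interior point $t_0$ gives $F(\gamma(t_0)) \le (1-t_0)F(\gamma(0)) + t_0 F(\gamma(1)) = F(U^\ast)$, and combined with $F(U^\ast)$ being the \emph{maximum}, equality $F(U)=F(U^\ast)$ would need to be forced. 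The way to force it: since $F\le F(U^\ast)$ everywhere and $F$ restricted to the geodesic is a convex function $h(t)$ on $[0,1]$ with $h(0)=h(1)=F(U^\ast)=\max h$, convexity of $h$ together with $h\le h(0)$ forces $h$ to be constant on $[0,1]$ (a convex function bounded above by its endpoint values, which are equal, is constant). Hence $F(U)=F(U^\ast)$, and since $U$ was arbitrary, $F$ is constant.

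\textbf{The main obstacle} is establishing rigorously that through every point of $SO(d)$ (and through any prescribed second point, or at least enough points to reach all of $SO(d)$) there passes a \emph{closed} geodesic — the naive one-parameter subgroup $\exp(tA)$ need not be periodic unless the ratios of eigenvalue-angles of $A$ are rational. I would handle this either by (i) using a bi-invariant metric and the fact that $SO(d)$ is a compact Lie group, so the closure of the one-parameter subgroup $\{\exp(tA):t\in\R\}$ is a torus, and any two points of $SO(d)$ can still be joined by a \emph{minimizing} geodesic (Hopf--Rinow) which is a segment of a one-parameter subgroup translate; then density of periodic geodesics or a limiting argument upgrades the convexity inequality to the closed case; or (ii) more simply, noting that for the maximizer $U^\ast$, \emph{any} geodesic segment starting and returning near $U^\ast$ suffices, and compactness guarantees for each $U$ a geodesic loop at $U^\ast$ through $U$ via: join $U^\ast$ to $U$ minimizingly, continue past $U$, and use compactness plus the symmetric-space property that the "antipodal-type" return makes the extended geodesic eventually come back — the cleanest citation being that on a compact symmetric space geodesics are either closed or dense in a flat torus, and in the dense case one passes to a limit in the convexity inequality. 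I would present option (i) with the torus-closure argument as the rigorous core, citing \cite{boumal2023intromanifolds} for geodesic completeness and standard Lie theory for the structure of one-parameter subgroups.
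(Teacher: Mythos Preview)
The paper does not give its own proof of this theorem; it simply cites Yau's 1974 note (the reference \texttt{Shing-Tung.1974}), which establishes the result for general compact Riemannian manifolds. So there is no detailed argument in the paper to compare against, but let me comment on your route versus the standard one behind that citation.

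Your closed-geodesic strategy can be made to work, and you have correctly identified the only genuine difficulty: not every pair of points in $SO(d)$ lies on a common \emph{periodic} geodesic, since a one-parameter subgroup $t\mapsto\exp(tA)$ is closed only when the eigenvalue angles of $A$ are rationally commensurable. Your fix~(i) --- density of closed geodesics through the maximizer together with continuity of $F$ --- is sound, because such $A$ are dense in the Lie algebra and hence closed geodesics through $U^\ast$ reach a dense subset of $SO(d)$; continuity then propagates $F\equiv F(U^\ast)$ everywhere. Fix~(ii) is too vague to stand as written.

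That said, you are working much harder than necessary. The standard argument (essentially Yau's) avoids closed geodesics altogether: let $U^\ast$ maximize $F$, let $U$ be arbitrary, take a minimizing geodesic $\gamma:[0,L]\to SO(d)$ with $\gamma(0)=U$, $\gamma(L)=U^\ast$, and --- using only geodesic completeness from Hopf--Rinow --- extend it to $\gamma:[0,2L]\to SO(d)$. Then $h(t)\coloneqq F(\gamma(t))$ is convex on $[0,2L]$ with $h(L)=\max F\ge h(0),h(2L)$, so
\[
h(L)\le \tfrac{1}{2}h(0)+\tfrac{1}{2}h(2L)\le h(L),
\]
forcing $h(0)=h(L)$, i.e.\ $F(U)=F(U^\ast)$. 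This uses nothing about $SO(d)$ beyond compactness and connectedness, and sidesteps the periodicity issue entirely. I would recommend replacing the closed-geodesic machinery with this two-line extension argument.
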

	
	Consequently, convergence to a global minimum of either of the methods depends on the initial guess $U^{(0)}$. In the following, we focus on the Riemannian gradient descent and derive its local sublinear convergence. As for the Landing algorithm, its local convergence to the global minimizer remains an open problem. We start with the following sublinear convergence result.  
	
	\begin{theorem}\label{thm: manifold optimization global}
		There exist $L>0$ such that the sequence $\{ U^{(r)}\}_{r \ge 0}$ generated by Riemannian gradient descent for \eqref{loss:smooth} with step sizes $\nu_r = \nu \le 1/L$ admits
		\begin{equation}\label{eq: sufficient decrease}
			\ell_\varepsilon(U^{(r+1)}) - \ell_\varepsilon(U^{(r)}) 
			\le - \tfrac{1}{2L} \norm{\grad \ell_\varepsilon(U^{(r)})}_F^2, \quad r \ge 0. 
		\end{equation}
		Furthermore, $\{ \ell_\varepsilon(U^{(r)}) \}_{r \ge 0}$ converges and $\grad \ell_\varepsilon(U^{(r)}) \to 0$ as $r \to \infty$. 
	\end{theorem}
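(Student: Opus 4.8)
The plan is to recognize this as a standard descent lemma argument for Riemannian gradient descent with a retraction, and to reduce it to verifying a Lipschitz-type (pullback smoothness) condition for $\ell_\varepsilon$. First I would establish that $\ell_\varepsilon$ is smooth: writing $g_{ij}(U) = \frac{1}{N}\sum_{n=1}^N (U^\tT B_n U)_{ij}^2$, each $g_{ij}$ is a polynomial in the entries of $U$, hence $C^\infty$ on all of $\R^{d\times d}$, and $\ell_\varepsilon(U) = \sum_{i\neq j}(g_{ij}(U)+\varepsilon)^{1/2}$ is a smooth function of $U$ because $\varepsilon>0$ keeps the argument of the square root bounded away from $0$. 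Since $SO(d)$ is compact, $\ell_\varepsilon$ restricted to a neighborhood of $SO(d)$ has bounded second derivative; combined with the smoothness and compactness of the QR-retraction $\Retr$ (the $\mathrm{QR}$ map is smooth on a neighborhood of $SO(d)\times\{0\}$ in the tangent bundle, and $SO(d)$ is compact), the pullback $\ell_\varepsilon\circ R_U$ is uniformly Lipschitz-smooth: there is a single $L>0$ with
\[
\ell_\varepsilon(\Retr(U,V)) \le \ell_\varepsilon(U) + \langle \grad\ell_\varepsilon(U), V\rangle + \tfrac{L}{2}\|V\|_F^2
\]
for all $U\in SO(d)$ and all tangent vectors $V$ with, say, $\|V\|_F\le 1$ (this norm restriction is why one needs $\nu_r\le 1/L$ together with a bound on $\|\grad\ell_\varepsilon\|_F$, which is again available by compactness). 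This kind of uniform pullback-smoothness estimate on a compact manifold is exactly \cite[Lemma~2.7 / Corollary~2.8 style results]{boumal2023intromanifolds}, so I would cite it rather than reprove it.

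The second step is the one-line substitution. Taking $V = -\nu_r\grad\ell_\varepsilon(U^{(r)})$ with $\nu_r = \nu\le 1/L$ in the descent inequality yields
\[
\ell_\varepsilon(U^{(r+1)}) \le \ell_\varepsilon(U^{(r)}) - \nu\|\grad\ell_\varepsilon(U^{(r)})\|_F^2 + \tfrac{L\nu^2}{2}\|\grad\ell_\varepsilon(U^{(r)})\|_F^2 = \ell_\varepsilon(U^{(r)}) - \nu\bigl(1 - \tfrac{L\nu}{2}\bigr)\|\grad\ell_\varepsilon(U^{(r)})\|_F^2,
\]
and since $\nu\le 1/L$ gives $1-\tfrac{L\nu}{2}\ge \tfrac12$, we obtain $\ell_\varepsilon(U^{(r+1)}) - \ell_\varepsilon(U^{(r)}) \le -\tfrac{\nu}{2}\|\grad\ell_\varepsilon(U^{(r)})\|_F^2 \le -\tfrac{1}{2L}\|\grad\ell_\varepsilon(U^{(r)})\|_F^2$, which is \eqref{eq: sufficient decrease}. (One should check the admissibility $\|V\|_F\le 1$ of the chosen step: since $\|\grad\ell_\varepsilon\|_F$ is bounded on the compact $SO(d)$, shrinking $\nu$ — equivalently enlarging $L$ — makes $\|-\nu\grad\ell_\varepsilon(U^{(r)})\|_F\le 1$, so this is absorbed into the constant.)

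For the final assertions, the sequence $\{\ell_\varepsilon(U^{(r)})\}_r$ is nonincreasing by \eqref{eq: sufficient decrease} and bounded below (by $0$, or by $\min_{SO(d)}\ell_\varepsilon$), hence convergent; telescoping \eqref{eq: sufficient decrease} over $r=0,\ldots,R-1$ gives
\[
\tfrac{1}{2L}\sum_{r=0}^{R-1}\|\grad\ell_\varepsilon(U^{(r)})\|_F^2 \le \ell_\varepsilon(U^{(0)}) - \ell_\varepsilon(U^{(R)}) \le \ell_\varepsilon(U^{(0)}) - \inf_{SO(d)}\ell_\varepsilon < \infty,
\]
so the series $\sum_r \|\grad\ell_\varepsilon(U^{(r)})\|_F^2$ converges and in particular $\grad\ell_\varepsilon(U^{(r)}) \to 0$. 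I expect the main obstacle to be the first step: making precise, with uniform constants, that the composition of $\ell_\varepsilon$ with the QR-retraction is Lipschitz-smooth on the whole tangent bundle over $SO(d)$ — this needs the smoothness of the QR map near $SO(d)\times\{0\}$ and a compactness argument to get one constant $L$ valid at every base point. Everything after that is the textbook descent-lemma bookkeeping sketched above.
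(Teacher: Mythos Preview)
Your proposal is correct and follows essentially the same route as the paper: both rely on the smoothness of $\ell_\varepsilon$ (guaranteed by $\varepsilon>0$) together with the compactness of $SO(d)$ to obtain a uniform Lipschitz-type bound and then invoke the standard descent lemma, with the paper simply citing \cite[Lemma~2.7 and Theorem~2.5]{Boumal.2018} where you spell out the substitution and telescoping explicitly. One small slip to fix: the last inequality $-\tfrac{\nu}{2}\|\grad\ell_\varepsilon\|_F^2 \le -\tfrac{1}{2L}\|\grad\ell_\varepsilon\|_F^2$ points the wrong way when $\nu<1/L$; it holds with equality only at $\nu=1/L$, so either fix $\nu=1/L$ or carry $-\tfrac{\nu}{2}$ through the telescoping (which still gives the stated convergence conclusions).
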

	\begin{proof}
		We first note that $\ell_\varepsilon(U) > 0$ for all $U \in \mathbb R^{d \times d}$. Then, the convergence results follow from \eqref{eq: sufficient decrease} via \cite[Theorem 2.5]{Boumal.2018}. Hence, we only need to show that \eqref{eq: sufficient decrease} holds, which is, in turn, guaranteed by \cite[Lemma 2.7]{Boumal.2018}. Therefore, let us check that all the conditions of \cite[Lemma 2.7]{Boumal.2018} are satisfied. The manifold $SO(d)$ is a compact Riemannian submanifold of $\mathbb R^{d \times d}$.
		Furthermore, $\ell_\varepsilon$ is a continuously-differentiable function on a compact set, and, hence, it is Lipschitz-continuous \cite[Corollary 6.4.20]{Sohrab.2014}. 
	\end{proof}
	
	Theorem \ref{thm: manifold optimization global} only ensures convergence of manifold optimization to a fixed point and not necessarily to a global minimum, which is a typical outcome in nonconvex optimization. Let us define 
	\[
	\ell^* := \min_{U \in SO(d)} \ell_\varepsilon(U),
	\quad \mathcal{M} := \argmin_{U \in SO(d)} \ell_\varepsilon(U),
	\quad \text{and} \quad \mathcal{F} := \{U \in SO(d): \grad \ell_\varepsilon(U) = 0\}.
	\]
	The next result ensures that by initializing Riemannian gradient descent in a neighborhood of the global minima, it will converge to it.
	
	\begin{theorem}\label{thm: manifold optimization local}
		There exists a level set $\{ U \in SO(d): \ell_\varepsilon(U) \le \ell^* + q^*\}$ with $q^*>0$ that contains all global minimizers $\mathcal M$ and no other fixed points $\mathcal F$ of $\ell$. Consequently, if $U^{(0)} \in \{ U: \ell_\varepsilon(U) < \ell^* + q^*\}$, the sequence generated by Riemannian gradient descent with step sizes as in Theorem \ref{thm: manifold optimization global} converges to a global minimum of $\ell_\varepsilon$.
	\end{theorem}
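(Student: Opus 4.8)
The plan is to exhibit a strictly positive threshold $q^*$ separating the value of $\ell_\varepsilon$ at the global minimizers $\mathcal M$ from its value at every other critical point in $\mathcal F \setminus \mathcal M$, and then to combine this separation with the monotone-decrease and convergence-to-a-critical-point statements already obtained in Theorem~\ref{thm: manifold optimization global}. The central analytic input will be compactness of $SO(d)$ together with the continuous differentiability of $\ell_\varepsilon$ on a neighborhood of $SO(d)$, which makes both $\ell_\varepsilon$ and $\grad \ell_\varepsilon$ continuous on the compact set $SO(d)$.

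First I would argue that the set of non-optimal critical points is, in a suitable sense, uniformly bounded away in value from $\ell^*$. Concretely, set
\[
q^* := \inf\{ \ell_\varepsilon(U) - \ell^* : U \in \mathcal F \setminus \mathcal M \}
\]
if $\mathcal F \setminus \mathcal M \neq \emptyset$, and $q^* := 1$ (say) otherwise. The claim is that $q^* > 0$. Suppose not; then there is a sequence $U^{(k)} \in \mathcal F \setminus \mathcal M$ with $\ell_\varepsilon(U^{(k)}) \to \ell^*$. By compactness of $SO(d)$ we may pass to a subsequence converging to some $U^\star \in SO(d)$. Continuity of $\ell_\varepsilon$ gives $\ell_\varepsilon(U^\star) = \ell^*$, so $U^\star \in \mathcal M$; continuity of $\grad \ell_\varepsilon$ gives $\grad \ell_\varepsilon(U^\star) = 0$, consistent with $U^\star$ being a minimizer. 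The apparent difficulty is that this limit argument alone does not yield a contradiction: $U^\star$ can legitimately be a critical point. The resolution I would use is that $\mathcal M$ and $\mathcal F \setminus \mathcal M$ must be \emph{separated} as sets, i.e. $\dist(\mathcal M, \mathcal F \setminus \mathcal M) > 0$ — this in turn requires knowing that $\mathcal M$ is, locally, isolated among critical points, or more precisely that no sequence of non-minimizing critical points accumulates at $\mathcal M$. Here one invokes that the global minimizers of $\ell_\varepsilon$ are exactly the orthogonal matrices realizing the sparsest pattern (Proposition~\ref{lemma:sufficient}), and the structural rigidity this imposes: near such a $U^\star$, the loss $\ell_\varepsilon$ is a sum of square-roots of strictly positive smooth functions, and its Hessian along $SO(d)$ at $U^\star$ is positive definite transversally to the orbit of block-conjugations described in Theorem~\ref{uniqueness:block}. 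This gives a neighborhood of $\mathcal M$ on which the only critical points are themselves in $\mathcal M$, contradicting accumulation of $U^{(k)}$. Hence $q^* > 0$, and the level set $\{ \ell_\varepsilon \le \ell^* + q^*\}$ contains $\mathcal M$ and, by definition of $q^*$, no point of $\mathcal F \setminus \mathcal M$.

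For the second assertion, suppose $U^{(0)}$ lies in the open level set $\{ \ell_\varepsilon < \ell^* + q^*\}$. By the sufficient-decrease inequality~\eqref{eq: sufficient decrease} of Theorem~\ref{thm: manifold optimization global}, $\{\ell_\varepsilon(U^{(r)})\}_r$ is nonincreasing, so every iterate stays in this level set; in particular the whole sequence remains in the compact set $\{ \ell_\varepsilon \le \ell^* + q^*\} \cap SO(d)$. By Theorem~\ref{thm: manifold optimization global}, $\grad \ell_\varepsilon(U^{(r)}) \to 0$ and $\ell_\varepsilon(U^{(r)})$ converges to some limit $\ell_\infty \le \ell^* + q^*$. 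Taking any accumulation point $\bar U$ of $\{U^{(r)}\}$ (one exists by compactness), continuity yields $\grad \ell_\varepsilon(\bar U) = 0$ and $\ell_\varepsilon(\bar U) = \ell_\infty \le \ell^* + q^*$, so $\bar U \in \mathcal F \cap \{ \ell_\varepsilon \le \ell^* + q^*\} = \mathcal M$. Thus every accumulation point is a global minimizer and $\ell_\infty = \ell^*$, i.e. the sequence converges (in value, and subsequentially in iterate) to a global minimum of $\ell_\varepsilon$.

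I expect the main obstacle to be the step establishing $q^* > 0$, i.e. ruling out a sequence of spurious critical points accumulating at the global minimizers. The monotone-decrease/compactness machinery is routine; the genuinely delicate point is the local structure of $\ell_\varepsilon$ near $\mathcal M$, where the square-root form of the summands and the block-conjugation ambiguity from Theorem~\ref{uniqueness:block} must be used to show that minimizers are isolated modulo that ambiguity and hence that the critical set is locally exhausted by $\mathcal M$ in a neighborhood of $\mathcal M$. If a direct Hessian computation is too cumbersome, an alternative is a \L ojasiewicz-type argument: $\ell_\varepsilon$ is real-analytic on a neighborhood of $SO(d)$ (being a finite sum of square roots of strictly positive polynomials in the entries), so a \L ojasiewicz inequality holds at each minimizer, which both forces the separation of critical values and upgrades subsequential convergence of the iterates to full convergence.
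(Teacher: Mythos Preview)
Your overall architecture---separate the value $\ell^*$ from the values on $\mathcal F\setminus\mathcal M$ by a gap $q^*>0$, then use the monotone decrease from Theorem~\ref{thm: manifold optimization global} and compactness of $SO(d)$ to trap the iterates---is exactly what the paper does, and your second half (accumulation points lie in $\mathcal M$) is correct. One small technicality: with your definition $q^*=\inf\{\ell_\varepsilon(U)-\ell^*:U\in\mathcal F\setminus\mathcal M\}$ you only get $\ell_\varepsilon\ge\ell^*+q^*$ on $\mathcal F\setminus\mathcal M$, so the \emph{closed} sublevel set in the statement could touch a spurious critical point; replace $q^*$ by $q^*/2$ (or argue, as you implicitly do, with the strict sublevel set and $\ell_\infty<\ell^*+q^*$).

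The genuine gap is in your primary argument for $q^*>0$. Invoking Proposition~\ref{lemma:sufficient} and Theorem~\ref{uniqueness:block} does not give what you claim: Proposition~\ref{lemma:sufficient} is only a \emph{sufficient} condition for global optimality (it does not characterize $\mathcal M$, and for generic $B_n$ the bound there need not be attained), and Theorem~\ref{uniqueness:block} concerns the block-diagonalization step, not the minimizers of $\ell_\varepsilon$. So neither result supplies the transverse Hessian positivity you assert, and that assertion is itself nontrivial because the square-root summands in $\ell_\varepsilon$ can be degenerate along directions that are not obviously accounted for by the block-conjugation orbit. In short, the ``structural rigidity'' step is not justified.

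The paper resolves exactly this step by the route you list as an alternative: it observes that $SO(d)$ is a real-analytic manifold and $\ell_\varepsilon$ is real-analytic (square roots of strictly positive polynomials), so the {\L}ojasiewicz inequality holds at every $U^\star\in\mathcal M$. Then, along your same convergent subsequence $U^{(k)}\to U^\star$ of non-minimizing critical points, {\L}ojasiewicz gives $0=\|\grad\ell_\varepsilon(U^{(k)})\|_F\ge c\,|\ell_\varepsilon(U^\star)-\ell_\varepsilon(U^{(k)})|^{1-\zeta}$ for large $k$, forcing $\ell_\varepsilon(U^{(k)})=\ell^*$, a contradiction. This is cleaner and avoids any Hessian computation; I would promote your ``alternative'' to the main argument and drop the appeal to Proposition~\ref{lemma:sufficient} and Theorem~\ref{uniqueness:block}.
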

	
	\begin{proof}
		The main idea of the proof is to show that there exists an open neighborhood of $\mathcal M$ such that it does not contain other critical points $\mathcal F \backslash \mathcal M$ or in other words that $\mathcal M$ is isolated from $\mathcal F \backslash \mathcal M$. Then, we will show that $\ell_\varepsilon$ on $\mathcal F \backslash \mathcal M$ is strictly larger than $\ell^*$ and it is possible to find suitable $q^*>0$. 
		
		The proof is based on the {\L}ojasiewicz inequality. We say that function $\ell_\varepsilon$ 
		satisfies {\L}ojasiewicz inequality at point $U\in SO(d)$ if there exists $\delta > 0$ 
		such that for all $V \in SO(d)$, $\norm{U - V}_F \le \delta$ the inequality
		\[
		\norm{\grad \ell_\varepsilon(V) }_F \ge c |\ell_\varepsilon(U) - \ell_\varepsilon(V)|^{1-\zeta}
		\]
		holds for some $c>0$ and $\zeta \in [0,1/2)$. By \cite[Proposition 2.2 and Remark 1]{Schneider.2015}, for an analytic function on an analytic manifold\footnote{According to Definition 2.7.1 in \cite{Krantz.2002}}, {\L}ojasiewicz inequality is satisfied at every point on the manifold. $SO(d)$ is analytic and $\ell_\varepsilon \in C^{\infty}(r)$ is analytic as a superposition of a square root and a positive polynomial. Consequently, for each $U \in \mathcal M$ we can find an $\delta = \delta(U) >0$ 
		from {\L}ojasiewicz inequality. 
		
		Next, we show by contradiction that there exists a threshold $q^*>0$ such that $\ell_\varepsilon(U) > \ell^* + q^*$ on $\mathcal F \backslash \mathcal M$. Assume that the opposite holds and for every $q > 0$ there exists $U = U(q) \in \mathcal F \backslash \mathcal M$ such that $\ell_\varepsilon(U(q)) \le \ell^* + q$. Then, let us consider a sequence $\{U(1/k)\}_{k\ge 1} \subseteq \mathcal F \backslash \mathcal M$. Since $\mathcal F \backslash \mathcal M \subseteq SO(d)$ and $SO(d)$ is compact, $\{U(1/k))\}_{k\ge 1}$ is bounded and there exists a convergent subsequence $\{U(1/k_j)\}_{j \ge 1}$ with limit $U^*$. By construction, $U^* \in \mathcal F$ and it admits 
		\[
		\ell^* \le \ell_\varepsilon(U^*) 
		= \ell_\varepsilon\left( \lim_{j \to \infty}U(1/k_j) \right) 
		= \lim_{j \to \infty} \ell_\varepsilon\left( U(1/k_j) \right)
		\le \lim_{j \to \infty} \ell^* + \frac{1}{k_j}
		= \ell^*,
		\]
		so that $U^* \in \mathcal M$. However, from the convergence it follows that there exists $j_0 \in \mathbb N$ such that for all $j \ge j_0$ we have $\norm{U^* - U(1/k_j)}_F < \delta(U^*)$. The {\L}ojasiewicz inequality then gives
		\[
		0 = \norm{\grad \ell_\varepsilon(U(1/k_j)) }_F 
		\ge c |\ell_\varepsilon(U^*) - \ell_\varepsilon(U(1/k_j))|^{1-\zeta} \ge 0,
		\]
		which is only possible if $U(1/k_j) \in \mathcal M$ and $\ell_\varepsilon(U(1/k_j)) = \ell^*$.
		Yet, it contradicts $U(1/k_j) \in \mathcal C \backslash \mathcal M$. Therefore, we obtain the contradiction. We also note that the scenario of a global minimum at infinity is impossible as $SO(d)$ is compact. 
		
		Thus, there exists $q^*>0$ such that $\ell_\varepsilon(U) > \ell^* + q^*$ for all $U \in \mathcal F \backslash \mathcal M$. Consequently, all fixed points in the set $\mathcal L := \{U \in SO(d): \ell_\varepsilon(U) \le \ell^* + q^*\}$ are global minimizers of $\ell_\varepsilon$. If $U^{(0)} \in \mathcal L$, by Theorem \ref{thm: manifold optimization global}, the sequence $\{ U^{(r)}\}_{r \ge 0}$ generated by Riemannian gradient descent will remain in $\mathcal L$ and converge to a point in $\mathcal L \cap \mathcal F = \mathcal M$.     
	\end{proof}
	
	Theorem \ref{thm: manifold optimization local} has a flavor of standard convergence results based on {\L}ojasiewicz-type inequalities, e.g., \cite[Theorem 2.2]{Absil.2005} or \cite[Theorem 3.3]{Attouch.2010}. Although commonly a linear convergence is expected in a neighborhood of the accumulation point, a stronger assumption on the function is required. For instance, the exponent $\zeta$ in the {\L}ojasiewicz inequality has to be $1/2$, which is known as the Polyak-{\L}ojasiewicz inequality. For $C^2$ functions, it is equivalent to a number of other well-known conditions \cite{Rebjock.2023}. We refer the interested reader to Section 1.3 of \cite{Rebjock.2023} for an extensive literature overview on the topic. While there are some rules on the computation of {\L}ojasiewicz exponents \cite{Li.2018, Yu.2022}, the establishment of local linear convergence in the case of \eqref{loss:smooth} remains a topic of future research.     
	%------------------------------------------
	\section{Numerical results}\label{sect:numerics}
	In the following section, we will first investigate the performance of the manifold optimization method for sparsifying a set of symmetric matrices. Furthermore by applying the three-step algorithm consisting of vertex minimization, finest connected component decomposition and sparse component decomposition in order to find an optimal $U\in SO(d)$ for a set of functions, we demonstrate that the gradients and Hessians admit the optimal sparsity patterns. For two test functions, we show that our algorithm finds $U\in SO(d)$ such that $f_U$ exhibits the correct sparse ANOVA decomposition.
	\footnote{The code for our examples is available at https://github.com/fatima0111/Sparse-Function-Decomposition-via-Orthogonal-Transformation.}
	
	To run the numerical experiments we have used the following libraries: \textit{pytorch}~\cite{NEURIPS2019_9015}, \textit{numpy}~\cite{harris2020array} for the general computations, \textit{RiemannianSGD} from \textit{geoopt}~\cite{geoopt2020kochurov} to perform the Riemannian gradient descent and \textit{LandingSGD} from \cite{ablin2022fast} to execute the Landing procedure. To compute the ANOVA-terms in section~\ref{subsec:test_functions}, we have used the \textit{tntorch} library \cite{tntorch22}. All computations were performed on a NVIDIA RTX A$6000$ $48$GB graphics card.
	
	\subsection{Manifold optimization on $SO(d)$ for jointly sparsifying a set of symmetric matrices}\label{subsec:numeric_random_hessian}
	\subsubsection*{Creating jointly sparsifiable symmetric matrices}
	In order to test the feasibility of the manifold optimization methods for jointly sparsifying matrices we create sets of nonsparse matrices where we know that they can be made jointly sparse through conjugation with an orthogonal matrix. Let $J \subseteq [d]\times [d]$ be a set of jointly nonsparse entries. The matrices $\tilde H_n$, $1 \le n \le N$, are constructed by sampling $(\tilde H_n)_{i,j}=(\tilde H_n)_{j,i} \sim \textrm{Unif}[-1,1]$ whenever $(i,j) \in J$ and setting the rest of the entries to zero. Then, with randomly drawn $R\in SO(d)$, we take 
	\begin{equation}
		\mathcal{H}_R(J):= \set{H_{n}:=R^{T} \tilde H_n R: 1 \le n \le N}
	\end{equation}
	as input data for edge minimization.
	% Let $J\subseteq [d]\times [d]$ be a set satisfying that $(i,j)\in J$ implies $i\leq j$. We construct a randomly chosen set of $n\in\N$ symmetric matrices that are nonzero only at $\{(i,j),(j,i)\}\cap J=\emptyset$ as follows.
	% \begin{itemize}
		% \item Choose symmetric matrices $\tilde{H}_k=B_k+B_k^T$ for $B_k \in [-1, 1]^{d \times d}$ randomly drawn and $1\leq k\leq n$.
		% \item Create the set of sparse matrices $\mathcal{H}(J):=\{H_k:1\leq k\leq n\}$ where
		% \begin{align}
			%     (H_k)_{i,j} := \begin{cases}
				%     (\tilde{H}_k)_{i,j},\,& \text{if }(i,j)\in J\text{ or } (j,i) \in J,\\
				%     0,\,&\text{otherwise}.
				%     \end{cases}
			% \end{align}
		% \item Draw $R\in O(d)$ randomly and define
		% \begin{equation}
			%     \mathcal{H}_R(J):= \set{H_R:=R^{T}HR: H \in \mathcal{H}(J)}.
			% \end{equation}
		% \end{itemize}
	\begin{remark}
		For generically chosen $\tilde{H}_n$ and $R$ we have that $\dim\spa(\mathcal{H}_R(J))=\dim\spa(\mathcal{H}(J))=|J|$. Thus by Lemma \ref{lemma:sufficient} we know that for any minimizer $U\in SO(d)$ of $\ell_\varepsilon$ it holds that $U^T\mathcal{H}_R(J)U$ has exactly $|J|$ nonzero entries. 
	\end{remark}
	\textit{Creating noisy data.} Additionally, to investigate whether the manifold optimization procedures are robust towards noise, we define the set 
	\begin{align}
		\mathcal{H}_{R}(J,\sigma) \coloneqq \{H+\zeta_H:H\in \mathcal{H}_R(J)\}
	\end{align}
	where each entry $(\zeta_H)_{i,j}\sim \mathcal{N}(0,\sigma^2)$ is drawn from a Gaussian distribution with mean $0$ and variance $\sigma^2$. 
	
	\subsection*{Details of the manifold optimization}
	\textit{Reducing complexity.}
	To reduce the time complexity of the algorithm an approximate basis of the $\spa(\mathcal{H}_R(J))$ resp. $\spa(\mathcal{H}_R(J,\sigma))$  is computed via SVD. Choose a threshold $\tau$ and use SVD on $\mathcal{H}_R(J)$ resp. $\mathcal{H}_{R}(J,\sigma)$ to find all orthonormal vectors corresponding to singular values $s\geq \tau$. We denote the set of these orthonormal vectors by $H_R(J)$ resp. $H_{R}(J,\sigma)$. Note that even for $\mathcal{H}_R(J)$ the threshold $\tau$ is necessary since numerical errors may occur.
	Then, for $\mathcal H= \mathcal H_R(J)$ or $\mathcal H= \mathcal H_{R}(J,\sigma)$, we minimize
	\begin{align}\label{loss:h}
		\ell_{\mathcal H}(U)\coloneqq\frac{1}{\sqrt{\abs{\mathcal H}}}\sum_{(i,j)\in[d]^2}\Big( \sum_{H \in \mathcal H}(U^T H U)_{i,j}^2 + \varepsilon \big)^\frac12.
	\end{align}
	Note that we included the diagonal entries as they provide information on whether the function depends only linearly on the variables. \\[1ex]
	\textit{Random initialization.} Since the loss function is not convex, an appropriate initialization is needed. The random initialization method consists of generating randomly $5$ angles $\alpha_1, \cdots, \alpha_5 \in \Theta_d$ from the uniform distribution and to compute the corresponding rotation matrices $R_{\alpha_k}, k=1,\cdots, 5$ according to \eqref{eq: rotation parametrization}. Each rotation matrix will be used to initialize the Riemannian gradient descent or Landing method for only $5\cdot 10^3$ iterations to obtain the matrices $\hat{R}_{\alpha_k}$. Let 
	\begin{equation}
		\ell_{\frac 1 2, 2}\bigl(U\bigr) := \frac{1}{\sqrt{\abs{\mathcal H}}}\Big(\sum_{i,j=1}^ d \big(\sum_{H \in \mathcal H} (U^T H U)_{i,j}^2\big)^\frac14\Big)^2
	\end{equation}
	and use as the \emph{random initializer}
	\begin{align}
		R_{RI}^0:=\argmin_{1\leq k\leq 5}\ell_{\frac 1 2,2}\left(\hat{R}_{\alpha_k}\right).
	\end{align}
	Note that we used $\ell_{\frac 1 2,2}$ here because it better approximates the sparsity norm $\|\cdot\|_{0,\infty}$.\\
	\textit{Grid search initialization.}
	For the grid search in Section \ref{subsec:grid_search} and for $h\in\left\{1,0.5,0.25,0.125,0.1\right\}$ and $\Gamma(h)$ a grid as in \eqref{eq: grid set} we let
	\begin{align}
		R_h^0:=\argmin_{R\in\Gamma(h)}\ell_{\frac 1 2,2}(R).
	\end{align}
	For $d=5$ we only use this method for $h=1$ due to high computational complexity.
	
	\subsubsection*{Convergence and performance of the manifold optimization methods} 
	For $d=2,3,4,5$ we applied the manifold optimization methods for minimizing $\ell_{\mathcal H}$, see \eqref{loss:h}. For every dimension $d$ we created $100$ sets of jointly sparsifiable
	symmetric matrices $\mathcal{H}_R(J)$ resp.
	\begin{table}[h!]
		\centering
		\begin{tabular}{|c|c|c|c|c|}
			\cline{2-5}
			\multicolumn{1}{c|}{}& $d = 2$ & $d = 3$ & $d = 4$ & $d = 5$  \\
			\hline
			$1 \le |J| \le$ & $3$     & $6$     & $7$     & $11$ \\
			\hline
		\end{tabular}
		% \caption{Caption}
		\label{tab:size J}
	\end{table}
	$\mathcal{H}_R(J,\sigma)$ where $J$ was chosen randomly with the constraint that $\quad \quad$
	% \begin{equation}
		%    \abs{J} \in \begin{cases}
			%         \{1, \cdots, 3\},\, \text{if } d=2,\\
			%         \{1, \cdots, 6\},\, \text{if }  d=3,\\
			%         \{1, \cdots, 7\},\, \text{if }  d=4,\\
			%         \{1, \cdots, 11\},\, \text{if }  d=5.
			%     \end{cases}
		% \end{equation}
	\textit{Convergence.}
	Note that the minimum of our objective \eqref{loss:h} is unknown but we know that $R^T\mathcal{H}_R(J)R$ admits the optimal sparsity pattern. Let $U^{(r)}$ be the output of the manifold optimization at iteration $r$. We then compare $\ell_{\mathcal H}(U^{(r)})$ with $\ell_{\mathcal H}(R^T)$ Figure~\ref{fig:losses_random_hessians} in order to analyze the convergence of the manifold optimization methods. It shows that especially for $d\in\{4,5\}$ initialization with grid search admits a better convergence than random initialization which is in line with  Corollary \ref{cor:convergence_global_min}. The Landing method and the Riemannian gradient descent quantitatively give similar results both in objective value and runtime, see Figure~\ref{fig:losses_random_hessians} and Table~\ref{tab:runtime} in Appendix \ref{appendix:numerics}.
	
	\begin{figure}[ht!]
		\centering
		\includegraphics[width=\linewidth]{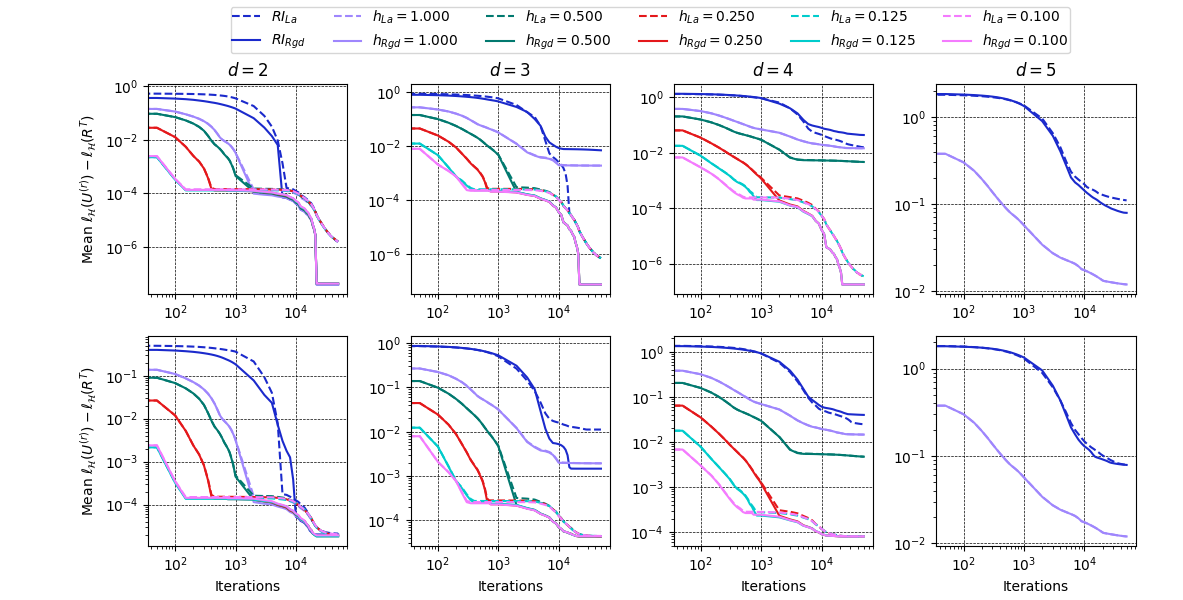}
		\caption{\small{Mean optimality gap $\ell_{\mathcal H}\bigl(U^{(r)}\bigr) - \ell_{\mathcal H}\bigl(R^{\tT}\bigr)$ over 100 experiments. Top: Noise-free matrices. Bottom: Noisy matrices. RI denotes random initialization and $h$ uses the grid search with the corresponding grid density value. Subscripts Rgd and La stand for Riemannian gradient descent and Landing algorithm, respectively.
				%Mean of the difference of the optimization method loss function $\ell^{(k)}=\ell_H(U_k)$ and the ground-truth loss $\ell_g=\ell_H(R^T)$ of $100$ sets of symmetric jointly sparsifiable matrices. 
				%RI\_La: Random initialization with landing algorithm, RI\_Re: Random initialization with Riemannian gradient descent.
		}}
		%h\_La=h: Grid search initialization with grid size $h$ and landing algorithm, h\_Re=h: Grid search initialization with grid size $h$ and retraction manifold optimization.}}
\label{fig:losses_random_hessians}
\end{figure}

\textit{Sparsifying performance.}
Since $\ell_\varepsilon$ approximates $\ell$, we will only obtain approximate sparsity and use an upper threshold instead of the $\|\cdot\|_{0,\infty}$ norm as follows. For $H\in\R^{d\times d}$, we define
$
|H|=(|H_{i,j}|)_{(i,j)\in[d]^2}
$
and 
\begin{align}\label{eq: thresholding}
\bar{H}:=\frac{1}{n}\sum_{H\in\mathcal{H}}|H|\in\R^{d\times d}, \quad
\left(\bar{H}_\eta\right)_{ij}:=\begin{cases}
	0,& \text{if } \bar{H}_{ij} \leq \eta,\\
	\bar{H}_{ij}, & \text{otherwise},
\end{cases}
\end{align}
where $\mathcal H =U^T\mathcal{H}_R(J)U$.
Thus we can compare the sparsity up to a level of $\eta$ by
\begin{equation}\label{difference_sparsity}
\chi(U, \eta) \coloneqq \norm{\bar{H}_\eta}_0 -|J|.
\end{equation}
The quantity $\chi(U,\eta)$ measures how far from the optimal sparsity $U^T\mathcal{H}_R(J)U$ is, details about the results can be found in Appendix \ref{appendix:numerics}. 

To highlight the impact of the thresholding parameter $\eta$ on the sparsity, we consider a failure ratio $\mathcal R$ as a mean
\begin{equation}\label{eq:ratio}
\text{Ratio } \mathcal R \coloneqq \frac{1}{100} \sum_{j=1}^{100} \min\{1,\chi(U_j,\eta)\}
\end{equation}
of 100 experiments with resulting matrices $U_j$. To evaluate how well our rotation matrices $U$ obtained from noisy data sparsify the noiseless matrices, we apply the matrices $U$ to $\mathcal H(J)$. The results are displayed in Figure~\ref{fig:Truncation_random_hessians} where the ratio of the algorithm not finding the optimal sparsity is plotted over the level $\eta.$ It shows that initialization with grid search for small $h$ is beneficial especially in dimension $d\in\{4,5\}$ while the comparison of manifold optimization by means of Landing or Riemannian gradient descent is inconclusive.

\begin{figure}[ht!]
\centering
\includegraphics[width=\linewidth]{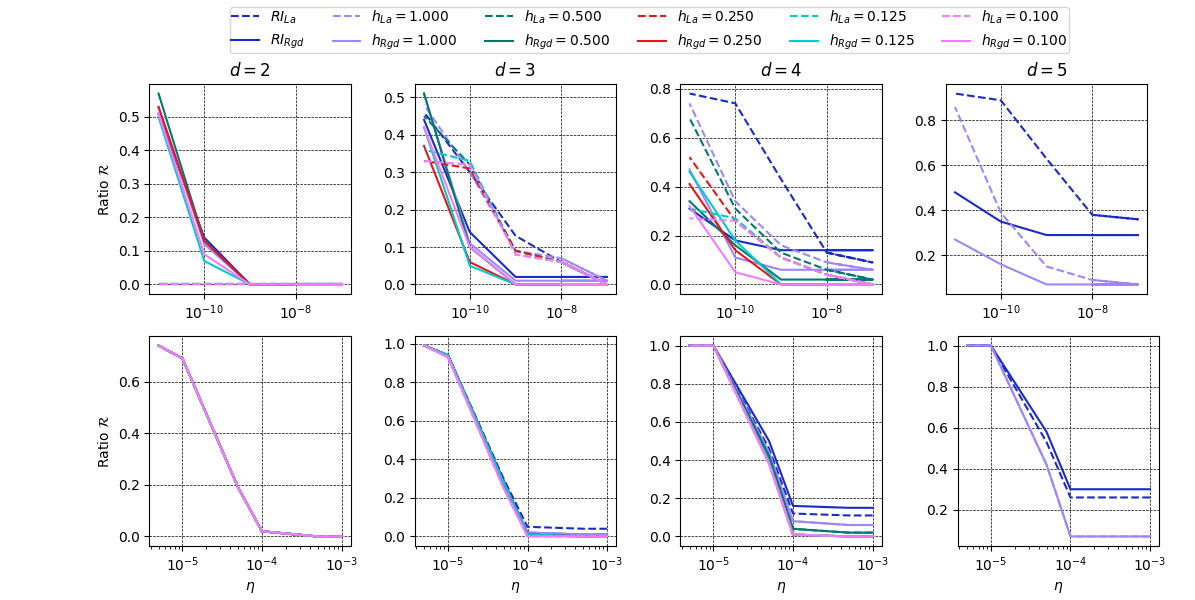}
\caption{Failure ratio $\mathcal R$ \eqref{eq:ratio} of suboptimal joint sparsity reconstructions for a given thresholding parameter $\eta$. First row: Clean data. Second row: Noisy data with additive random Gaussian noise $\mathcal{N}(0,\sigma), \sigma=10^{-3}$.}
\label{fig:Truncation_random_hessians}
\end{figure}

\subsection{Performance of the sparsifying algorithm on test functions}\label{subsec:test_functions}
To illustrate the performance of our three-step algorithm from Section \ref{schritt:1} consisting of vertex minimization, finest connected component decomposition and sparse component decomposition we apply it to 50 functions. Each test function $f:\mathbb B_r(d)\to \R$ with $10\leq d \leq 15$ arguments is determined in the following way. First, a function $\tilde f$ with sparse $\mathcal E(\tilde f)$ is constructed by randomly partitioning $[d]$ into connected components with 2 to 4 vertices. For each of the components, the edges $(j,k)$ are picked at random. Then, we set $\tilde f$ as 
\[
\tilde f(x) \coloneqq \sum_{(j,k) \in \mathcal E(\tilde f)} c_{jk} g_{jk,1}(x_j)g_{jk,2}(x_k)
\]
where the coefficients $c_{jk}$ are drawn from the interval $[5,20]$ and functions $g_{jk,1}, g_{jk,2}$ are drawn from the set
\[
S:=\set{x+t,\,  x^t,\, \sqrt[3]{x^2+t^2},\, \sin\left(tx\right), \, \cos\left(tx\right),\, e^{-(x-t)^2}: t\in\{1,2,3\}}.
\]
At last, a random rotation of the coordinates $f(x) = \tilde f(Rx)$ is applied.  
Moreover, to show that our three-step algorithm is robust towards additive noise, consider the noise function
\begin{equation}\label{eq:noise_function}
N(x) \coloneqq \frac{1}{2000}\sum_{\mu \in G_d}\exp\left(-\frac{1}{2}(x-\mu)^{T}Z^{-1}(x-\mu)\right),
\end{equation}
where $G_d=\{(x_1,\ldots,x_d):x_i\in\{-1/2,\; 3/2\}\}, Z=0.5\boldsymbol{\mathrm{I}}_{d}.$ The noisy test functions are then
\begin{equation}\label{eq:noisy_func}
f_{n} := f + N.
\end{equation}
We sample for each function $N = 100d$ points where we compute the gradient and Hessians in order to apply the algorithm.
%As in section~\ref{subsec:numeric_random_hessian} we compare the convergence and sparsifying performance. For the third step, we used either random initialization or grid search initialization followed by either landing or retraction manifold optimization for the sparse component decomposition. Figure \ref{fig:sub1} and Figure \ref{fig:functions} compare the three-step algorithm for these combinations. 
We compare the convergence of the different manifold optimization as follows. For a function $f$ resp. $f_{n}$ we apply the first two steps, the vertex minimization is done via SVD and the finest component decomposition \eqref{eq:block} which always resulted in the correct number of relevant variables resp. block sizes. Then, for each collection of blocks $\mathcal H_k = \{ H^{U,k}_n: ~ 1 \le n \le N \}$, $1 \le k \le K$, we use either random initialization or grid search initialization followed by either Landing or Riemannian gradient descent for $r$ steps on each block. Afterwards, we compute the loss \eqref{loss:h} of each block, sum over all blocks  
\begin{equation}\label{eq:l_bar_h}
\overline\ell_{\mathcal H} \coloneqq 
\sum_{k=1}^K \ell_{\mathcal H_k}.
\end{equation}
and take the mean over $50$ trials. The results shown in Figure \ref{fig:sub1} and Figure \ref{fig:functions} indicate that grid search with smaller $h$ converges faster and to lower function values than random initialization.
For comparing the sparsifying performance, we compute the ratio of functions where the algorithm does not find the optimal sparsity of Hessians. It is smaller for grid search initialization with small $h$ compared to random initialization as can be seen in the second column of Figure \ref{fig:sub1} and Figure \ref{fig:functions}. This is true for both the clean and the noisy functions. There is no substantial difference for the comparison of the Landing method and the Riemannian gradient descent. 
\begin{figure}[htbp]
\centering
\centering
\includegraphics[width=\textwidth]{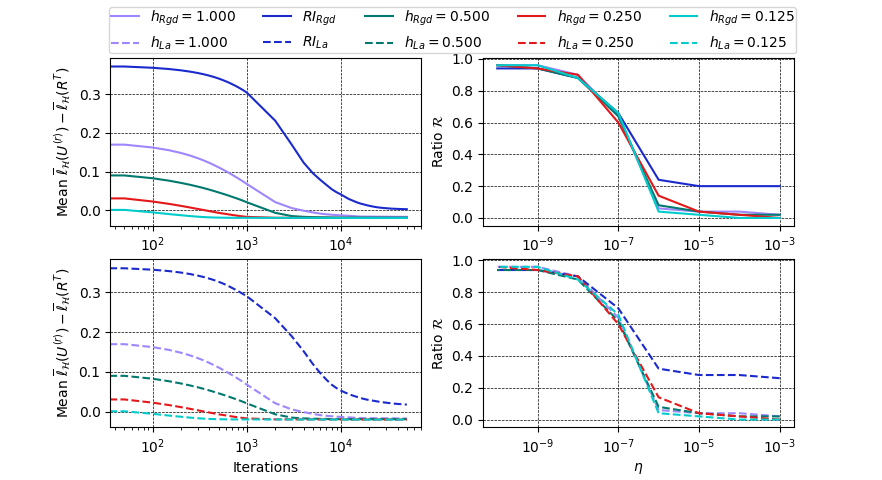}
\caption{
	%\textcolor{red}{OLEH COMMENT: This description is way too long. Also: the bottom row is just Landing and not noisy data. Noisy data is in the next figure.}
	Mean optimality gap $\overline\ell_{\mathcal H}(U^{(r)}) - \overline\ell_{\mathcal H}(R^T)$ and failure ratio $\mathcal R$ for 50 noise-free functions, see \eqref{eq:l_bar_h} and \eqref{eq:ratio} respectively.  
	% denotes the ratio of the functions where the sparsity of the Hessians, after applying the full three-step algorithm, is not optimal at level $\eta$ to the functions where it is, see also . 
	%RI\_La: Random initialization with Landing algorithm, RI\_Rgd: Random initialization with Riemannian gradient descent, h\_La=h: Grid search initialization with grid size $h$ and Landing algorithm, h\_Rgd=h: Grid search initialization with grid size $h$ and Riemannian gradient descent.
	RI denotes random initialization and $h$ uses the grid search with the corresponding grid density value. Subscripts Rgd and La stand for Riemannian gradient descent and Landing algorithm, respectively.}
\label{fig:sub1}
\end{figure}
\begin{figure}
\centering
\includegraphics[width=\linewidth]{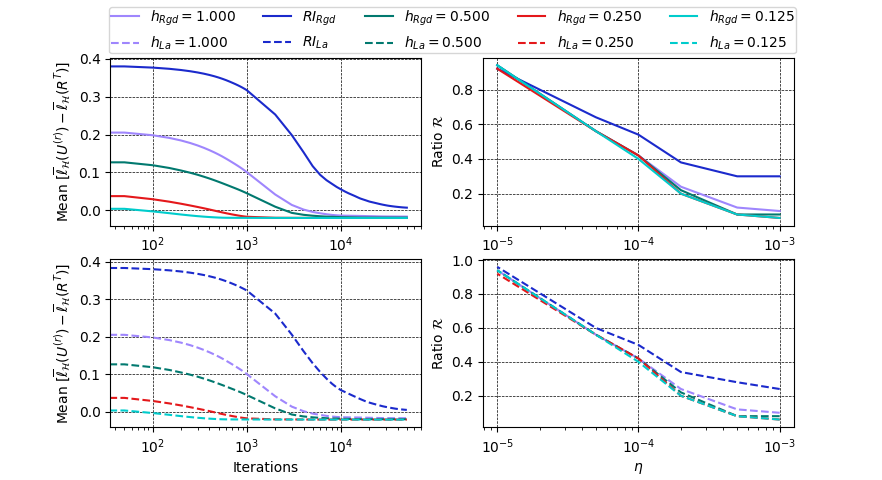}
\label{fig:prob1_6_2}
\caption{\small{Reconstructions for noisy test functions $f_n$ with notation as in Figure \ref{fig:sub1}.
		% but $\overline\ell_{\mathcal H}(U_{k, n})$ and Ratio $\mathcal R$ for the noisy functions $f_{i,n}.$
}}
\label{fig:functions}
\end{figure}
\subsection{ANOVA decomposition} 
To illustrate the dependency of the higher-order ANOVA terms on the first- and second-order derivatives discussed in Section~\ref{sect:decmark} we will consider two $7$-dimensional sparse additive test functions $f:\mathbb B_r(7) \rightarrow \R$ defined as
\begin{enumerate}
\item $\tilde{f}^1(x) =  5e^{-(x_1-1)^2}(x_4+1)+ 7\sin(2x_1)x_7^3+ 10\cos(2x_2)(x_5+3)$ where the connected components of $\mathcal{G}(f)$ correspond to
\begin{equation}
	\mathcal{B}^1 = \set{\set{x_1, x_4, x_7}, \set{x_2, x_5}}
\end{equation}
\item $\tilde{f}^2(x)= 5e^{-(x_1-1)^2}\cos(3x_4) +10x_1x_7^3 +8\sin(x_2)\cos(x_7) + 12\cos(2x_3)\sin(3x_5) + 6x_5x_6$ with connected components corresponding to
\begin{equation}
	\mathcal{B}^2 = \set{\set{x_1, x_2, x_4, x_7}, \set{x_3, x_5, x_6}}.
\end{equation}
\end{enumerate}
For each test function $\tilde{f}^i, i=1,2$ we randomly draw an orthogonal matrix $R^i \in O(7)$ such that $f^i:=\tilde{f}_{R^i}^i$ are not sparse anymore. Additionally, we add the noise function $N$ from equation~\eqref{eq:noise_function} to obtain the noisy functions $f^{i,n} = f^i + N$. As Table \ref{tab:anova_f_R} shows $f^i$ has no first- or second-order vanishing ANOVA terms anymore.\\ 
\begin{table}[htpb]
\centering
\scalebox{0.95}{\begin{tabular}{|r|c|c|c|c|}
		\hline \xrowht{7pt}
		$f=$& \multicolumn{2}{c|}{$f^1$}&\multicolumn{2}{c|}{$f^2$}\\[0.5em]
		\hline
		\multicolumn{1}{|r|}{$d_{sp}=$}&$1$&$2$&$1$&$2$\\
		\hline
		$\mathcal{A}(f, d_{sp}, \infty)$&$0.6788$&$0.2465$&$0.3421$&$0.5783$\\
		\hline
		%$\mathcal{G}(d_{sp}, 2)$&$0.4159$&$0.0646$&$0.2302$&$0.1425$\\
		%\hline
		$\mathcal{A}(f, d_{sp}, 1)$ &$0.3614$&$0.0453$&$0.2102$&$0.1203$\\
		\hline
\end{tabular}}
\caption{Minimal values of the $L^p$-norm, $p\in \{1, \infty\}$ among all first- and second-order ANOVA terms, $\mathcal{A}(f, d_{sp}, p) := \min_{\substack{\uu \subseteq [d]\\ |\uu|=d_{sp}}} \norm{f_{\uu,A}}_p$.}
\label{tab:anova_f_R}
\end{table}

We apply our three-step algorithm with the Riemannian gradient descent and a grid-search initialization procedure with step-size $h=1$ to the functions $f^i,f^{i,n}$ to obtain the orthogonal matrices $U_i, U_{i,n}.$
Table \ref{tab:anova_derivative} shows that the matrices $U_i, U_{i,n}$ both applied to $f^i$ yield the correct number of first- and second-order derivatives vanish approximately in the sense that the empirical $L_1$ resp. $L_\infty$ norm is small. Then by Proposition \ref{prop: term bound} all ANOVA terms $(f^i_{U_i})_{\mathbf{u}},(f^{i}_{U_{i,n}})_{\mathbf{u}}$ have to be approximately zero for all $\mathbf{u}\subseteq [7]$ which contains a first- or second-order term that vanishes. This is confirmed empirically in Figures \ref{fig:comp_anova_gradient_f1},\ref{fig:comp_anova_hessian_f1},\ref{fig:comp_anova_hessian_f2} where the ANOVA decomposition and its $p$-norm was computed empirically.
\begin{table}[htpb]
\centering
\scalebox{0.95}{\begin{tabular}{|r|c|c|c|c|c|c|}
		\hline \xrowht{10pt}
		$f=$&$\tilde{f}^1$&$f^1_{U_1}$&$f^{1}_{U_{1,n}}$&$\tilde{f}^2$&$f^2_{U_2}$&$f^{2}_{ U_{2,n}}$\\[0.5em]
		\hline
		$G(f, \infty)$&$2$&$2$&$2$&$0$&$0$&$0$ \\
		\cline{2-7}
		$G(f, 1)$&$2$&$2$&$2$&$0$&$0$&$0$ \\
		\hline
		$H(f, \infty)$&$18$&$18$&$17$&$16$&$15$&$14$ \\
		\cline{2-7}
		$H(f, 1)$&$18$&$18$&$18$&$16$&$15$&$15$ \\
		\hline
\end{tabular}}
\caption{Number $G(f, p):=\big|\{i \in [d]:\|\partial_{\{i\}}f\|_p \leq 10^{-4}\}\big|$ of small first-order derivatives and number $H(f, p) := \big|\{\{i,j\}\subset [d]:  i\neq j \text{ and } \|\partial_{\{i,j\}}f\|_p \leq 10^{-4}\}\big|$ of small second-order derivatives for the ground truth functions $\tilde{f}^i$ and for $f^i_{U_i}, f^{i}_{U_{i,n}}$ where $U_i, U_{i,n}$ is the output of the sparsifying algorithm.}
\label{tab:anova_derivative}
\end{table}
\begin{figure}
\centering
\begin{minipage}{.49\linewidth}
	\includegraphics[width=\linewidth]{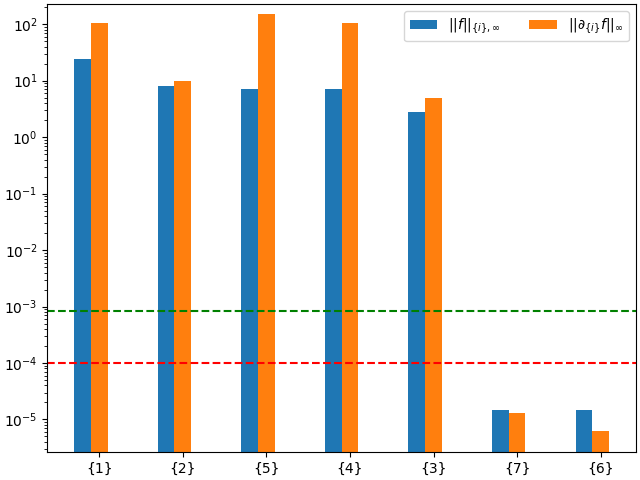}
	%\caption{$L_{\infty}$}
	%\label{fig:enter-label}
\end{minipage}
\begin{minipage}{.49\linewidth}
	\includegraphics[width=\linewidth]{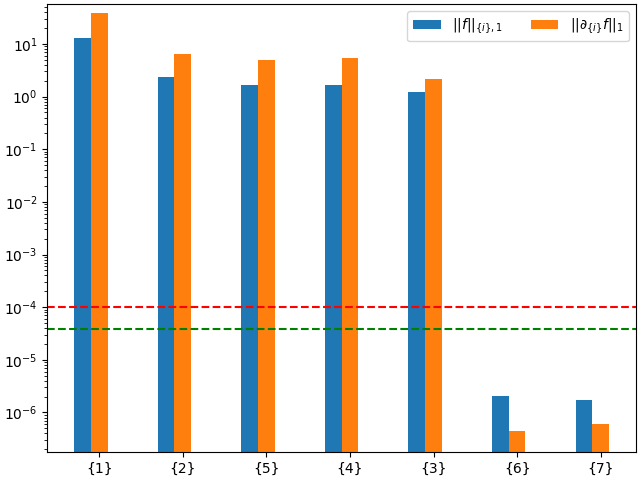}
	%\caption{$L1-norm$}
	%\label{fig:enter-label}
\end{minipage}
\begin{minipage}{.49\linewidth}
	\includegraphics[width=\linewidth]{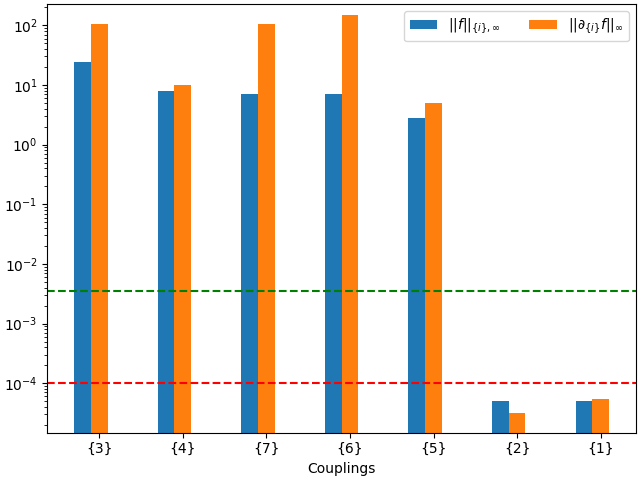}
	%\caption{$L1-norm$}
	%\label{fig:enter-label}
\end{minipage}
\begin{minipage}{.49\linewidth}
	\includegraphics[width=\linewidth]{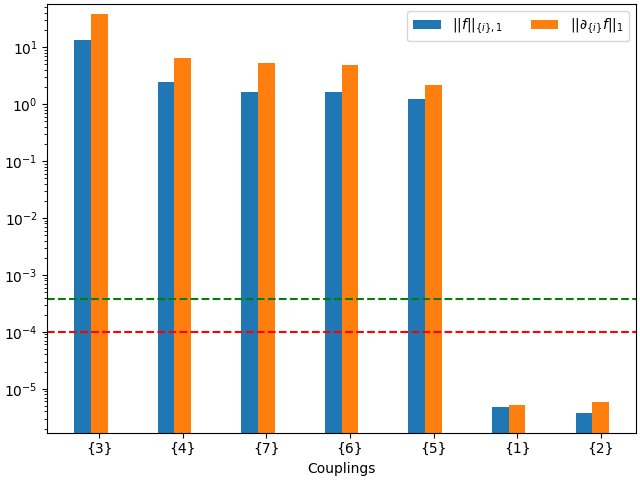}
	%\caption{$L1-norm$}
	%\label{fig:enter-label}
\end{minipage}
\caption{First row: $f =f^1_{U_1}$. Second row: $f=f^{1}_{U_{1,n}}$. First column: $p=\infty.$ Second column: $p=1.$ Blue bars:  $\|f\|_{\{i\},p}:= \max_{\{i\} \subseteq \uu \subseteq [7]}\norm{f_{\uu,A}}_p$, in decreasing order. Orange bars: $L^p$-norm of the corresponding first-order partial derivative $\|\partial_{\{i\}} f\|_p$. The green dashed line represents  $2^6\max\{\|\partial_{\{i\}} f\|_p:\|\partial_{\{i\}} f\|_p \leq 10^{-4}\}$ corresponding to Theorem \ref{prop: term bound} and the red dashed line represents the truncation value $10^{-4}$. }
\label{fig:comp_anova_gradient_f1}
\end{figure}
\begin{figure}
\centering
\begin{minipage}{.49\linewidth}
	\includegraphics[width=\linewidth]{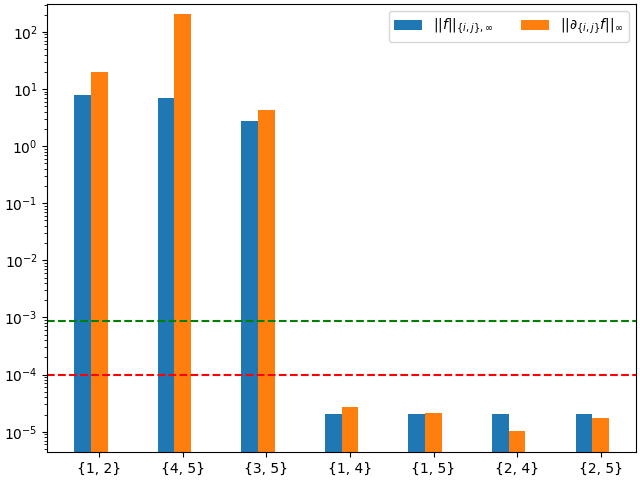}
	%\caption{$L1-norm$}
	%\label{fig:enter-label}
\end{minipage}
\begin{minipage}{.49\linewidth}
	\includegraphics[width=\linewidth]{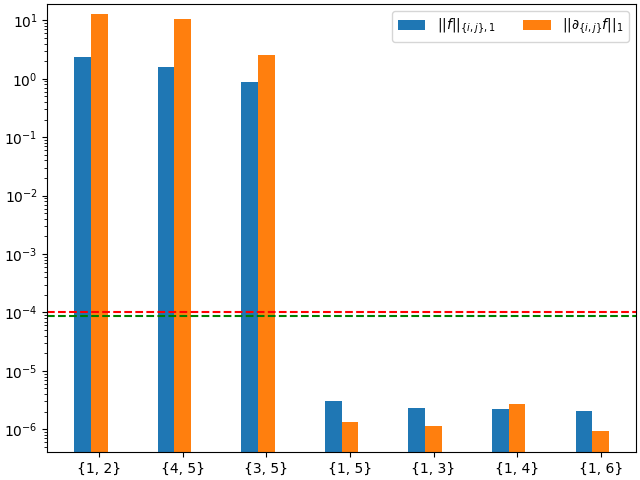}
	%\caption{$L1-norm$}
	%\label{fig:enter-label}
\end{minipage}
\begin{minipage}{.49\linewidth}
	\includegraphics[width=\linewidth]{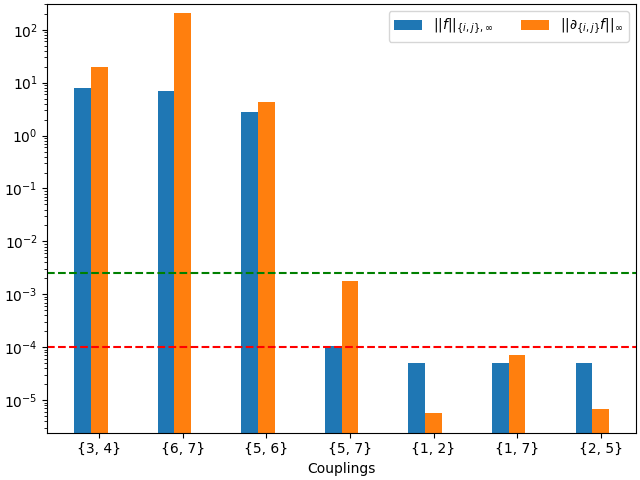}
	%\caption{$L_{\infty}$}
	%\label{fig:enter-label}
\end{minipage}
\begin{minipage}{.49\linewidth}
	\includegraphics[width=\linewidth]{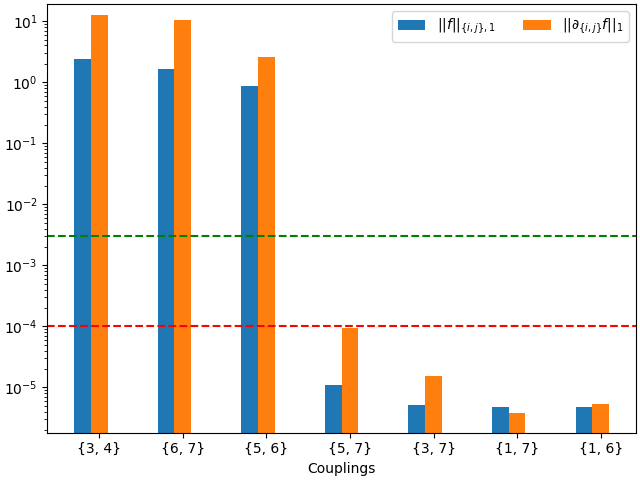}
	%\caption{$L1-norm$}
	%\label{fig:enter-label}
\end{minipage}
\caption{First row: $f =f^1_{U_1}$. Second row: $f=f^{1}_{  U_{1,n}}$. First column: $p=\infty.$ Second column: $p=1.$ Blue bars: $\|f\|_{\{i,j\},p}:= \max_{\{i,j\} \subseteq \uu \subseteq [7]}\norm{f_{\uu,A}}_p$ in decreasing order. Orange bars: corresponding $L^p$-norm of the second-order partial derivative $\|\partial_{\{i,j\}} f\|_p$. Only the largest $8$ terms are displayed. The green dashed line represents  $2^5\max\{\|\partial_{\{i,j\}} f\|_p:\|\partial_{\{i,j\}} f\|_p \leq 10^{-4}\}$ corresponding to Theorem \ref{prop: term bound} and the red dashed line represents the truncation value $10^{-4}$.}
\label{fig:comp_anova_hessian_f1}
\end{figure}

\begin{figure}
\centering
\begin{minipage}{.49\linewidth}
	\includegraphics[width=\linewidth]{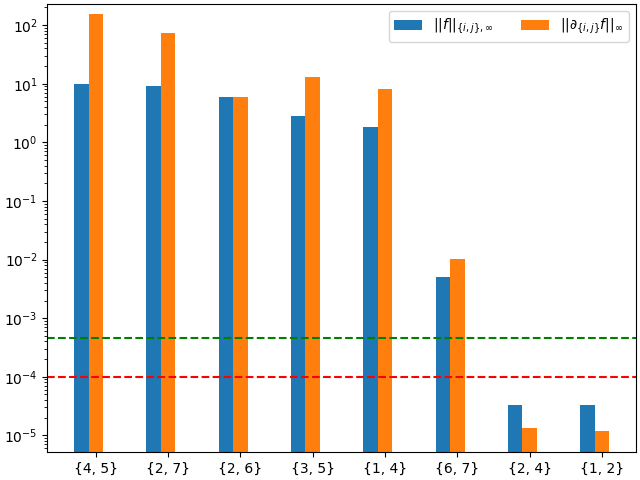}
	%\caption{$L1-norm$}
	%\label{fig:enter-label}
\end{minipage}
\begin{minipage}{.49\linewidth}
	\includegraphics[width=\linewidth]{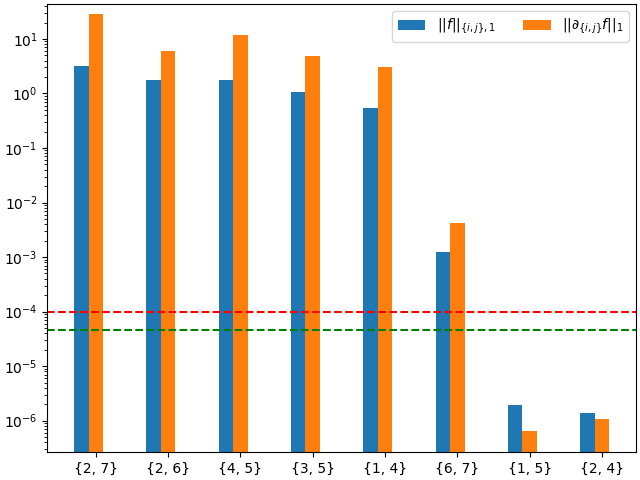}
	%\caption{$L1-norm$}
	%\label{fig:enter-label}
\end{minipage}
\begin{minipage}{.49\linewidth}
	\includegraphics[width=\linewidth]{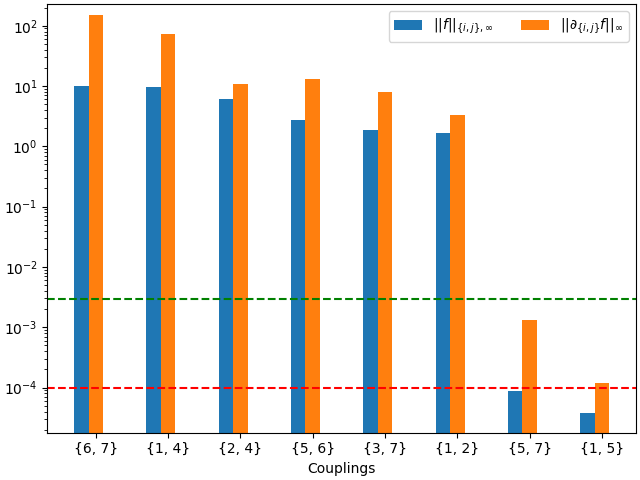}
	%\caption{$L1-norm$}
	%\label{fig:enter-label}
\end{minipage}
\begin{minipage}{.49\linewidth}
	\includegraphics[width=\linewidth]{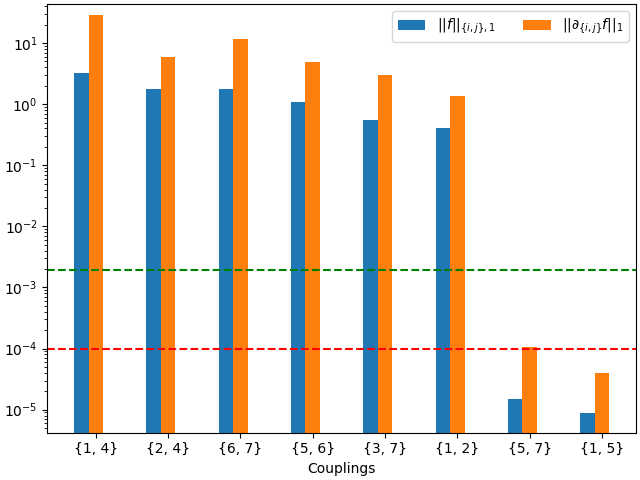}
	%\caption{$L1-norm$}
	%\label{fig:enter-label}
\end{minipage}
\caption{First row: $f =f^2_{U_2}$. Second row: $f=f^{n}_{  U_{2,n}}$. First column: $p=\infty.$ Second column: $p=1.$ Blue bars: $\|f\|_{\{i,j\},p}:= \max_{\{i,j\} \subseteq \uu \subseteq [7]}\norm{f_{\uu,A}}_p$ in decreasing order. Orange bars: corresponding $L^p$-norm of the second-order partial derivative $\|\partial_{\{i,j\}} f\|_p$. Only the largest $8$ terms are displayed. The green dashed line represents  $2^5\max\{\|\partial_{\{i,j\}} f\|_p:\|\partial_{\{i,j\}} f\|_p \leq 10^{-4}\}$ corresponding to Theorem \ref{prop: term bound} and the red dashed line represents the truncation value $10^{-4}$.} 
\label{fig:comp_anova_hessian_f2}
\end{figure}

%------------------------------------------
\section*{Acknowledgement}
F.B. acknowledges funding by the BMBF 01|S20053B project SA$\ell$E.
%and by the German Research Foundation (DFG) with\-in the project STE 571/16-1 SUPREMATIM
is gratefully acknowledged.
Ch.W. acknowledges funding by the DFG within the SFB “Tomography Across the Scales” (STE 571/19-1, project
number: 495365311). %is gratefully acknowledged. 
%---------------------------------------------------------------

%\bibliographystyle{abbrv}
%\bibliography{citations}

%%%%%%%%%%%%%%%%%%%%%%%%%%%%%%%%%%%%%%%%%%%%%%%%%%%%%%%%%%%%%%%%%%%%
\appendix
\section{Proof of Theorem \ref{prop: term bound}} \label{app:A}
%------------------------------------------------------------------
To prove the theorem, we need an auxiliary lemma.
The anchored decomposition can also be described via integrals of mixed partial derivatives as mentioned in \cite[Example 2.3]{KuoSloan2010} and for Sobolev spaces in \cite[Lemma 6]{hefter2016equivalence}. 

\begin{lemma}\label{prop:anchor_term}
Let $f:D \rightarrow \R$ such that $f \in C^{|\uu|}(D)$. Then the summands in anchored decomposition \eqref{dec_c} can be represented as
\begin{equation}\label{anchor:h}
	f_{\uu,c} = \int_{D_\uu} \partial_{t_\uu} f(t_\uu, c_{[d]\setminus \uu}) M(t_\uu, x_\uu) \, \dd \lambda_\uu (t_\uu),
\end{equation}
where
\begin{equation}
	M_{\uu}(t_{\uu}, x_{\uu}) \coloneqq \prod_{i \in \uu} M_i(t_i, x_i), \quad
	M_i(t_i, x_i) \coloneqq \begin{cases}
		1, & c_i < t_i < x_i,\\
		-1, & x_i < t_i < c_i,\\
		0 & \text{ otherwise}.
	\end{cases}
\end{equation}
\end{lemma}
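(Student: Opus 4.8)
The plan is to derive \eqref{anchor:h} from the explicit formula \eqref{eq: general decomposition_s} for the anchored summands by recognizing the alternating sum as a telescoping product of one-dimensional ,,difference'' operators and then converting each difference into an integral of a partial derivative against the sign kernel $M_i$ via the fundamental theorem of calculus. An essentially equivalent route is an induction on $|\uu|$, peeling off one index at a time; I will phrase the main argument in the product form and mention the inductive variant.

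First I would fix $\uu \subseteq [d]$ and introduce the function $h(x_\uu) \coloneqq f(x_\uu, c_{[d]\setminus\uu})$, which lies in $C^{|\uu|}(D_\uu)$ by hypothesis. Denoting by $E_j$ the operator that freezes the $j$-th coordinate at the anchor value $c_j$, one has $P_{[d]\setminus\vv,c}f = \big(\prod_{j\in\uu\setminus\vv}E_j\big)h$ for every $\vv\subseteq\uu$, so that \eqref{eq: general decomposition_s} becomes
\[
f_{\uu,c} = \sum_{\vv\subseteq\uu}(-1)^{|\uu|-|\vv|}\Big(\prod_{j\in\uu\setminus\vv}E_j\Big)h = \Big(\prod_{j\in\uu}(\mathrm{Id}-E_j)\Big)h,
\]
the last identity being just the expansion of the product over the index set $\uu$.

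Next, for a function that is $C^1$ in the variable $x_j$, the fundamental theorem of calculus gives $(\mathrm{Id}-E_j)g = \int_{c_j}^{x_j}\partial_j g\,\dd t_j = \int_{I_j}\partial_j g\,M_j(t_j,x_j)\,\dd t_j$, where the definition of $M_j$ handles the two orientations $c_j < x_j$ and $x_j < c_j$ simultaneously (and both sides vanish when $x_j=c_j$). Applying this successively for every $j \in \uu$ — or, equivalently, running an induction on $|\uu|$ with base case $f_{\emptyset,c} = f(c_{[d]})$ and the convention $M_\emptyset\equiv 1$, the inductive step peeling off one index $k\in\uu$ and using the identity above in the variable $x_k$ — one arrives at
\[
f_{\uu,c}(x_\uu) = \int_{D_\uu}\Big(\prod_{j\in\uu}\partial_j\Big)h(t_\uu)\; M_\uu(t_\uu,x_\uu)\,\dd\lambda_\uu(t_\uu),
\]
and $\big(\prod_{j\in\uu}\partial_j\big)h(t_\uu) = \partial_{t_\uu}f(t_\uu, c_{[d]\setminus\uu})$ by the definition of $h$, which is exactly \eqref{anchor:h}.

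The one point that needs care — and the reason the hypothesis $f\in C^{|\uu|}(D)$ is imposed — is the passage from the iterated one-dimensional identity to the single integral over $D_\uu$: at each step a new integration in $t_j$ must be pulled past the integrations in the previously treated variables and past the remaining differentiations in $x_i$, $i \in \uu$. Continuity of all mixed partials of $f$ on the compact box $D$ makes this routine: Schwarz's theorem lets the partials be taken in an arbitrary order, and Fubini's theorem together with differentiation under the integral sign justifies the interchanges. The remaining work is purely the sign bookkeeping carried by the kernels $M_j$ and keeping track of which coordinates are frozen at $c$, which is notational.
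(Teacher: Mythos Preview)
Your proposal is correct and follows essentially the same route as the paper's proof: both arguments are an induction on $|\uu|$ that peels off one coordinate at a time, converts the difference $(\mathrm{Id}-P_{j,c})$ into $\int_{I_j}\partial_j(\cdot)\,M_j\,\dd t_j$ via the fundamental theorem of calculus, and invokes the Leibniz rule / differentiation under the integral sign to justify the interchange of $\partial_j$ with the previously accumulated integrals. Your presentation is somewhat tidier in that you first freeze the coordinates outside $\uu$ to work with $h(x_\uu)=f(x_\uu,c_{[d]\setminus\uu})$, whereas the paper carries the anchor through the notation $c^{t_j}$, but the mathematical content is the same.
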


\begin{proof}
We show the assertion  by induction over $|\uu|$ for $\uu \subseteq [d]$. 
Without loss of generality assume that $c_i <x_i$ for all $i\in [d].$ 
If $\uu=\emptyset$ then the assertion follows directly from the definition of the projection operator. 
Let $\emptyset \neq \vv \subset [d]$ 
and let 
$j \in [d]\setminus \vv$ be arbitrary but fixed. 
Define $ c^{t_j} \coloneqq (c_1, \cdots, c_{j-1}, t_j, c_{j+1}, \cdots, c_d).$ 
Assume that the assertion holds for $\vv$. We show that it also holds for $\uu=\vv\cup \{j\}.$ The Leibniz integral rule~\cite[Theorem~6.2]{GRIEBEL} implies on the one hand 
\begin{align}
	h_{\uu,c}(x_\uu) &= \int_{D_\uu}\partial_j \partial_\vv f(t_\vv,t_j, c_{[d]\setminus \uu})M_\vv(t_\vv, x_\vv) M_j(t_j, x_j) \, \dd\lambda_\vv(t_\vv) \, \dd \lambda_j(t_j) \\
	&= \int_{I_j}\partial_j\left(\int_{D_\vv} \partial_\vv f(t_\vv, c_{[d]\setminus \vv}^{t_j}) M_\vv(t_\vv, x_\vv)\, \dd \lambda_\vv(t_\vv)\right)M_j(t_j, x_j) \, \dd \lambda_j(t_j),
\end{align}
and on the other hand the induction step yields
\begin{align}
	h_{\uu,c}(x_\uu) 
	&=\int_{-1}^{1}\partial_j\underbrace{\left(\Bigl(\prod_{i\in \vv} \bigl(\text{Id}-P_{i,  c^{t_j}}\bigr)\Bigr)P_{[d]\setminus \vv,  c^{t_j}}f(x)\right)}_{=\tilde f(x_\vv,  c^{t_j}_{[d]\setminus \vv}) }M_j(t_j, x_j) \, \dd \lambda_j(t_j)\\
	& = 
	\int_{c_j}^{x_j}\partial_j \tilde f(x_\vv,  c^{t_j}_{[d]\setminus \vv}) \dd \lambda_j(t_j)= \int_{c_j}^{x_j}\partial_j \tilde f(x_\vv, t_j, c_{[d]\setminus \{j\}}) \, \dd \lambda_j(t_j) \\
	&= 
	\tilde f(x_\vv, x_j, c_{[d]\setminus \{j\}})- \tilde f(x_\vv, c_j, c_{[d]\setminus \{j\}})   =(\text{Id}-P_{j,c}) \tilde f(x_\vv, x_j, c_{[d]\setminus \{j\}}) \\
	&= 
	(\text{Id}-P_{j,c}) \tilde f(x_\vv, c^{x_j}_{[d]\setminus \vv}) 
	=  
	(\text{Id}-P_{j,c})\Bigl(\prod_{i\in \vv}  \bigl(\text{Id}-P_{i,c^{x_j}}\bigr)\Bigr)P_{[d]\setminus \vv, c^{x_j}}f(x).
\end{align}
Since $j \notin \vv$, we have
\begin{equation}
	\prod_{i\in \vv}  \bigl(\text{Id}-P_{i,c^{x_j}}\bigr) =  \prod_{i\in \vv}  \bigl(\text{Id}-P_{i,c}\bigr), 
\end{equation}
and the definition of the projection operator and of $c^{x_j}$ imply that
\begin{equation}
	P_{[d]\setminus \vv, c^{x_j}}f(x) 
	= f(x_\vv, c^{x_j}_{[d]\setminus \vv})
	=f(x_\vv, x_j, c_{[d]\setminus (\vv\cup\{j\})}) 
	= P_{[d]\setminus \uu, c}f(x)
\end{equation}
which concludes the proof.
\end{proof}

\begin{proof}[Proof of Theorem \ref{prop: term bound}.]
i) By Lemma \ref{prop:anchor_term}, we have for sufficiently smooth $f$ that
\begin{align}
	\|f_{\vv,c}\|_\infty& = \left\|\int_{D_\vv} \partial_{t_\vv} f(t_\vv, c_{[d]\setminus \vv})M(t_\vv, x_\vv) \, \dd\lambda_\vv(t_\vv)\right\|
	\leq \|\partial_{t_\vv} f\|_{\infty} \lambda_\vv(D_\vv).
\end{align}
For $\mathbf{w}\subseteq [d]$ let $f^{x_{\mathbf{w}}}:\R^{d-|\mathbf{w}|}\to \R$ 
be the family of functions defined by $f^{x_{\mathbf{w}}}(x_{[d]\setminus \mathbf{w}}):=f(x_{\mathbf{w}},x_{[d]\setminus \mathbf{w}})$. For $\mathbf{w}:=\uu \setminus \vv$ we have that
\begin{align}\label{eq:neuf}
	f_{\uu,c}&= 
	\prod_{i\in\uu}(\text{Id}-P_{i,c})P_{[d]\setminus \uu,c}f 
	= \prod_{i\in\mathbf{w}}(\text{Id}-P_{i,c})\prod_{j\in\vv}(\text{Id}-P_{j,c})P_{([d]\setminus \mathbf{w})\setminus\vv}f\\
	&= \prod_{i\in\mathbf{w}}(\text{Id}-P_{i,c})f^{x_{\mathbf{w}}}_{\vv}(x_\vv).
\end{align}
Combining with $\left|(\text{Id}-P_{i,c})g(x_{[d]\setminus i})\right|\leq 2\|g\|_\infty$ for functions $g$, we obtain
\begin{align}
	\|f_{\uu,c}\|_{\infty} &=  \left\|\prod_{i\in\mathbf{w}}(\text{Id}-P_{i,c})f^{x_{\mathbf{w}}}_{\vv}(x_\vv)\right\|_\infty\leq 2^{|\mathbf{w}|}\sup_{x_\mathbf{w}\in D_\mathbf{w}}\{\|f^{x_{\mathbf{w}}}_{\vv}\|_\infty\}\\
	&\leq 2^{|\uu|-|\vv|}\|\partial_{x_{\vv}}f\|_\infty\lambda_\vv(D_\vv).
\end{align}

ii) This follows from part i) and the fact that we have by by Proposition \ref{anova:anchored} for the  ANOVA summands
\begin{align}
	f_{\uu,A} = \int_{D} f_{\uu, c} \, \dd\lambda(c).
\end{align}

iii) %\textcolor{red}{OLEH COMMENT: Something is missing here:} 
For $\mathbf{w} \subseteq [d]$, 
let $f^{x_{\mathbf{w}}}$ be defined as above and for any disjoint subsets $\mathbf{s}, \mathbf{t} \subseteq [d]$ let $f^{x_{\mathbf{s}}, c_{\mathbf{t}}}(x_{[d]\setminus (\mathbf{s}\cup \mathbf{t})}) = f(x_{\mathbf{s}}, c_{\mathbf{t}}, x_{[d]\setminus (\mathbf{s} \cup \mathbf{t})})$. For $\mathbf{w} \subseteq [d]$ and any commuting linear operators $a_j, b_j, j\in \mathbf{w}$ the following holds
\begin{equation}\label{eq:commop}
	\prod_{j \in \mathbf{w}} (a_j - b_j) = \sum_{\mathbf{s} \subseteq \mathbf{w}} \left(\prod_{j \in \mathbf{s}} a_j \right) \left(\prod_{j \in \mathbf{w} \setminus \mathbf{s}} b_j \right).
\end{equation}
If we define $\mathbf{w}:=\uu \setminus \vv$ then equations~\eqref{eq:neuf}, \eqref{eq:commop}
and \eqref{proj:anchored} imply that
\begin{align}
	f_{\uu,c}= \prod_{i\in\mathbf{w}}(\text{Id}-P_{i,c})f^{x_{\mathbf{w}}}_{\vv, c_{[d]\setminus \mathbf{w}}} = \sum_{\mathbf{s} \subseteq \mathbf{w}}(-1)^{|\mathbf{w}|-|\mathbf{s}|} P_{\mathbf{w}\setminus \mathbf{s}} f_{\vv,c_{[d]\setminus \mathbf{w}}}^{x_{\mathbf{w}}}= \sum_{\mathbf{s} \subseteq \mathbf{w}}(-1)^{|\mathbf{w}|-|\mathbf{s}|} f_{\vv,c_{[d]\setminus \mathbf{w}}}^{x_{\mathbf{s}}, c_{\mathbf{w}\setminus \mathbf{s}}}.
\end{align}
By proposition~\ref{anova:anchored} the ANOVA term fulfills
\begin{align}
	\norm{f_{\uu,A}}_1 &\leq\int_{D_\uu}\int_{D}\abs{f_{\uu, c}}d\lambda(c) dx_{\uu} = \int_{D}\int_{D_{\uu}}\abs{\sum_{\mathbf{s} \subseteq \mathbf{w}}(-1)^{|\mathbf{w}|-|\mathbf{s}|} f_{\vv,c_{[d]\setminus \mathbf{w}}}^{x_{\mathbf{s}}, c_{\mathbf{w}\setminus \mathbf{s}}}} d\lambda_{\uu}(x_{\uu})d\lambda(c)\\
	&\leq \sum_{\mathbf{s} \subseteq \mathbf{w}}\lambda(D_{\mathbf{w}})\int_{D_{[d]\setminus \mathbf{s}}} \int_{D_{\vv \cup \mathbf{s}}} \abs{ f_{\vv,c_{[d]\setminus \mathbf{w}}}^{x_{\mathbf{s}}, c_{\mathbf{w}\setminus \mathbf{s}}}}d\lambda_{\vv \cup \mathbf{s}}(x_{\vv \cup \mathbf{s}}) d\lambda_{[d]\setminus \mathbf{s}}(c_{[d]\setminus \mathbf{s}})\\
	&= \lambda(D_{\mathbf{w}})\sum_{\mathbf{s} \subseteq \mathbf{w}}\int_{D_{[d]\setminus \mathbf{w}}}\int_{D_{\vv}}\int_{D_{\mathbf{w}}}\abs{f_{\vv, c_{[d]\setminus \mathbf{w}}}^{x_{\mathbf{w}}}} d\lambda_{\mathbf{w}}(x_{\mathbf{w}})d\lambda_{\vv}(x_{\vv})d\lambda_{[d]\setminus \mathbf{w}}(c_{[d]\setminus \mathbf{w}})\\
	&= 2^{|\mathbf{w}|} \lambda(D_{\mathbf{w}})\underbrace{\int_{D_{[d]\setminus \mathbf{w}}}\int_{D_{\vv}}\int_{D_{\mathbf{w}}}\abs{f_{\vv, c_{[d]\setminus \mathbf{w}}}^{x_{\mathbf{w}}} }d\lambda_{\mathbf{w}}(x_{\mathbf{w}})d\lambda_{\vv}(x_{\vv})d\lambda_{[d]\setminus \mathbf{w}}(c_{[d]\setminus \mathbf{w}})}_{=:I}.
\end{align}
Finally, the triangle inequality yields 
\begin{align}
	I & = \int_{D_{[d]\setminus \mathbf{w}}}\int_{D_{\uu}}\abs{\int_{D_\vv} \partial_{t_\vv} f(t_\vv, c_{([d]\setminus \mathbf{w})\setminus \vv}, x_{\mathbf{w}})M(t_\vv, x_\vv) d\lambda_\vv(t_\vv)}d\lambda(x_{\uu})d\lambda(c_{[d]\setminus\mathbf{w}})\\
	&\leq \int_{D_{[d]\setminus \mathbf{w}}}\int_{D_{\uu}}\int_{D_\vv} \abs{\partial_{t_\vv} f(t_\vv, c_{([d]\setminus \mathbf{w})\setminus \vv}, x_{\mathbf{w}})} d\lambda_\vv(t_\vv)d\lambda(x_{\uu})d\lambda(c_{[d]\setminus\mathbf{w}})\\
	&= \lambda_{\vv}(D_{\vv})\int_{D_{[d]\setminus \mathbf{w}}}\int_{D_{\mathbf{w}}}\int_{D_\vv} \abs{\partial_{t_\vv} f(t_\vv, c_{([d]\setminus \mathbf{w})\setminus \vv}, x_{\mathbf{w}})}d\lambda_\vv(t_\vv)d\lambda(x_{\mathbf{w}})d\lambda(c_{[d]\setminus\mathbf{w}})\\
	&= \lambda_{\vv}^2(D_{\vv})\int_{D_{[d]\setminus \uu}}\int_{D_{\mathbf{w}}}\int_{D_\vv} \abs{\partial_{t_\vv} f(t_\vv, c_{[d]\setminus \uu}, x_{\mathbf{w}})}d\lambda_\vv(t_\vv)d\lambda(x_{\mathbf{w}})d\lambda(c_{[d]\setminus\uu})\\
	&= \|\partial_{t_\vv} f\|_{1}\lambda_\vv^2(D_\vv).
\end{align}
By combining all together, we obtain
\begin{align}
	\norm{f_{\uu,A}}_1  \leq 2^{|\uu|-|\vv|}\|\partial_{x_{\vv}}f\|_1\lambda_\uu(D_\uu)\lambda_\vv(D_\vv).
\end{align}
\end{proof}

%%%%%%%%%%%%%%%%%%%%%%%%%%%%%%%%%%%%%%%%%%%%%%%%%%%%%%%%%%%%%%%%%%%%%%%%%%%%%%%%%
\section{Proof of Theorem \ref{uniqueness:block}}\label{appendix:block}

The proof requires a lot of preliminary results.
In particular, the uniqueness up to permutation of blocks and conjugation by orthogonal matrices 
relies on the theory of matrix $*$ algebras. 

A \textit{matrix $*$ algebra} $\mathcal{A}$ is a subalgebra of the $\R$ algebra of $n\times n$ matrices $M_n(\R)$ which is closed under taking the transpose i.e.  $A\in\mathcal{A}$ implies $A^T\in\mathcal{A}$. Let $A_i\in M_{n_i}(\R)$ and denote by $\diag(A_1,\ldots,A_n)$ the block diagonal matrix
\[
\begin{pmatrix}
A_1 &0&0&0\\
0   &A_2&0&0\\
\hdots&\hdots&\ddots&0\\
0&0&0&A_n
\end{pmatrix}.
\]
a subset $\mathcal{A}\subseteq\{\diag(A_1,\ldots,A_k):A_i\in M_{\tilde{n}_i}\}$ we denote by $\pi_j$ the projection onto the $j$-th block i.e. $\pi_j(\diag(A_1,\ldots,A_k))=A_j$. If $\mathcal{A}$ is a matrix $*$ algebra the map $\pi_j:\mathcal{A}\to M_{\tilde{n}}(\R)$ is a \textit{morphism of matrix $*$ algebras} which means that $\pi_j$ is a $\R$ algebra morphism fullfiling $\pi_j(A^T)=\pi_j(A)^T$.\\
Recall that an $\R$ algebra is \textit{simple} if it does not contain any proper ideals. A matrix algebra $\mathcal{A}\subseteq M_n(\R)$ has a canonical representation $\rho:\mathcal{A}\to End(\R^n)$ given by $\rho(x)=x$ which is called the \textit{regular representation} of $\mathcal{A}$. We call $\mathcal{A}$ \textit{irreducible} if the regular representation is irreducible i.e. the regular representation contains no non-trivial subrepresentation.
\begin{remark}\label{art:wedd} An important decomposition of a semisimple algebra is the Artin-Wedderburn decomposition and we will describe its close relationship to the block diagonalization. While subalgebras of $M_n(\R)$ are not semisimple in general, matrix $*$ algebras subalgebras of $M_n(\R)$ are semisimple $\R$ algebras \cite[Lemma A.3]{murota2010numerical}. Thus by the Artin-Wedderburn Theorem \cite[Theorem 5.2.4]{cohn2012basic} for every matrix $*$-subalgebra $S$ of $M_n(\R)$ there exists division algebras $D_i$ over $\R$ and $n_i\in\mathbb N$ such that 
\[
S\simeq M_{n_1}(D_1)\times\cdots\times M_{n_j}(D_j)
\]
where up to permutation of factors the $n_i$ are unique and the $D_i$ are unique up to isomorphism. Note that the individual factors $M_{n_i}(D_i)$ are simple $\R$-algebras and simple $S$ modules.\end{remark}
\begin{remark}\label{rem:generated_algebra}
Note that $U^T H_iU, 1\leq i\leq n$ has a common block diagonal form if and only if this is true for every element of the matrix $*$ algebra generated by $U^TH_iU,1\leq i\leq n$. This follows from the fact that for $U\in O(d)$ we have that 
\begin{align}
	U^T (A+B)U&=U^TAU+U^TBU,&U^TABU=U^TAUU^TBU\\
	(U^TAU)^T&=U^TA^TU.&
\end{align}
In particular if $H_1,\ldots,H_n$ generate $V_{\nabla^2f}$  and $U\in O(d)$ corresponds to a finest block diagonalization of $H_1,\ldots, H_n$ then $f_U\in \mathcal{MK}(\mathcal{GS}(f))$. Furthermore up to a permutation matrix every element of $\mathcal{MK}(\mathcal{GS}(f))$ is of this form. Thus we can use block diagonalization results for matrix $*$ algebras in order to find a suitable $U\in O(d).$
\end{remark}
While there is algorithms about block diagonalization of matrix $*$ algebras to our knowledge the uniqueness of the blocks up to orthogonal conjugation of the blocks is not explicitly stated in the corresponding papers which is why we will briefly discuss it in the following.\\
The general strategy for proving the uniqueness of a finest block diagonalization up to conjugation of the blocks by orthogonal matrices is as follows. Let $H_1,\ldots,H_n\in M_d(\R)$ and let $U_1, U_2\in O(d)$ correspond to finest block diagonalizations. Denote by $B_i$ the matrix $*$-algebra generated by $U_i^TH_1U_i,\ldots,U_i^TH_nU_i$. 
Then
\begin{itemize}
\item Show that there exists simple matrix $*$-subalgebras $\mathcal{T}_{i,j}\subseteq B_i$ such that \[B_i=\{\diag(T_{i,1},\ldots,T_{i,n_i}):T_{i,j}\in \mathcal{T}_{i,j}\}.\] 
Show that $n_1=n_2$.
\item Denote by $\varphi:B_1\to B_2$ the matrix $*$-isomorphism $\varphi: x\mapsto U_2^TU_1xU_1^TU_2$ and by $\varphi\big|_{\mathcal{T}_{1,j}}:\mathcal{T}_{1,j}\to B_2$ the restriction to $\mathcal{T}_{1,j}$. Show up to permutation of the indices $j$ it holds $\varphi\big|_{\mathcal{T}_{1,j}}\left(\mathcal{T}_{1,j}\right)= \mathcal{T}_{2,j}$. Consequently \[\varphi\big|_{\mathcal{T}_{1,j}}:\mathcal{T}_{1,j}\to \mathcal{T}_{2,j}\] is a matrix $*$-isomorphism. 
\item To conclude the argument we show that a matrix $*$-algebra isomorphism between simple matrix $*$-algebras $S_1,S_2\in M_n(\R^d)$ which are of finest decomposition can be described as conjugation of the blocks by orthogonal matrices.\\
To be more precise let $S_1\subseteq \{\diag(A_1,\ldots, A_n):A_j\in M_{m}(\R)\}\supseteq S_2$ be simple matrix $*$-subalgebras such that the image of the projection $\pi_j$ onto the $j$-th block is irreducible. Then for every matrix $*$-isomorphism $\phi:S_1\to S_2$ there exists $V_j\in O(d)$ such that \[\phi(x)=\diag(V_1,\ldots,V_n)^Tx\diag(V_1,\ldots,V_n).\]
\end{itemize}
Overall we see that up to a permutation of blocks finest block diagonalizations are unique up to a orthogonal conjugation of the individual blocks.\\
The following theorem guarantees the existence of a finest block diagonalization.

\begin{theorem}{\cite[Theorem 1]{murota2010numerical}}\label{block:murota}
Let $\mathcal{T}$ be a $*$ subalgebra of $M_n(\R)$. Then there exists $U\in O(n)$ and simple subalgebras $\mathcal{T}_j\subseteq M_{n_j}$ s.t.
\begin{align}
	U^T \mathcal{T} U = \{\mathrm{diag}(T_1,\ldots,T_k):T_i\in\mathcal{T}_i\}.
\end{align} 
Furthermore for a simple subalgebra $\mathcal{T}\subseteq M_n(\R)$ there exists an irreducible subalgebra $\mathcal{T}^i\subseteq M_{\hat{n}}(\R)$ and a $V\in O(n)$ s.t.
\[
V^T \mathcal{T} V=\{\mathrm{diag}(A,\ldots,A):A\in\mathcal{T}^i\}.
\]

\end{theorem}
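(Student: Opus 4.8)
The plan is to establish the two assertions in turn: the first via the central (Artin--Wedderburn) decomposition of $\mathcal{T}$, the second via an \emph{orthogonal} isotypic decomposition of $\R^n$ as a $\mathcal{T}$-module. Throughout I use that a matrix $*$-algebra is semisimple (its radical is a nilpotent ideal, hence killed by the positive-definite trace form $\langle A,B\rangle=\trace(A^{\tT}B)$), together with the Wedderburn structure, exactly as recalled in Remark~\ref{art:wedd}.

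For the first statement the key point is that the primitive central idempotents of $\mathcal{T}$ are symmetric. Let $e_0$ be the identity of $\mathcal{T}$; since $x\mapsto x^{\tT}$ maps $\mathcal{T}$ into itself and sends an identity to an identity, $e_0^{\tT}=e_0$, so $e_0$ is an orthogonal projection and $\mathcal{T}$ acts as zero on $(\operatorname{ran}e_0)^{\perp}$. If $e$ is a primitive central idempotent, then $e^{\tT}$ is again a central idempotent, so $ee^{\tT}$ is a symmetric central idempotent, nonzero because $\trace(ee^{\tT})=\|e\|_F^2>0$, and it satisfies $(ee^{\tT})e=ee^{\tT}$; primitivity of $e$ forces $ee^{\tT}=e$, and transposing gives $e=e^{\tT}$. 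Hence the primitive central idempotents $e_1,\dots,e_k$ are pairwise orthogonal symmetric projections summing to $e_0$, and choosing an orthonormal basis adapted to $\R^n=\operatorname{ran}e_1\oplus\cdots\oplus\operatorname{ran}e_k\oplus(\operatorname{ran}e_0)^{\perp}$ produces $U\in O(n)$ with $U^{\tT}\mathcal{T}U$ block diagonal, the $j$-th block being $\mathcal{T}_j:=\mathcal{T}e_j$ restricted to $\operatorname{ran}e_j$; this is a $*$-subalgebra of $M_{n_j}(\R)$ and it is simple, being the Wedderburn factor cut out by $e_j$ (the last, possibly empty, block is zero).

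For the second statement I may assume $\mathcal{T}\subseteq M_n(\R)$ simple and unital with identity $I_n$, so $\R^n\cong S^{\oplus r}$ for the unique simple $\mathcal{T}$-module $S$, with $\dim_\R S=\hat n$ and $r\hat n=n$; I want this decomposition orthogonal. I would pass to the commutant $\mathcal{E}:=\{X\in M_n(\R):X\mathcal{T}=\mathcal{T}X\}=\mathrm{End}_{\mathcal{T}}(\R^n)$, which is again a matrix $*$-algebra (if $XA=AX$ for all $A\in\mathcal{T}$, apply this to $A^{\tT}$ and transpose) and is simple, $\mathcal{E}\cong M_r(D')$ for a division algebra $D'\in\{\R,\C,\mathbb{H}\}$. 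The central lemma to prove is: every matrix $*$-algebra $\mathcal{A}\subseteq M_n(\R)$ admits a complete system of pairwise orthogonal primitive \emph{symmetric} idempotents. Given an idempotent $p\in\mathcal{A}$, the element $h:=pp^{\tT}+(e_0-p^{\tT})(e_0-p)\in\mathcal{A}$ is symmetric and positive definite on $\operatorname{ran}e_0$, hence invertible in $\mathcal{A}$, and a suitable combination of $p,p^{\tT},h^{-1}$ is a symmetric idempotent with the same range as $p$; so each primitive idempotent can be made symmetric, and one peels off corners $e_0-\widetilde p$ inductively. Applying this to $\mathcal{E}$ yields symmetric idempotents $f_1,\dots,f_r\in\mathcal{E}$, pairwise orthogonal with $\sum f_i=I_n$, so $N_i:=\operatorname{ran}f_i$ are pairwise orthogonal $\mathcal{T}$-submodules, each isomorphic to $S$ since $f_i$ is primitive in $\mathcal{E}$.

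Finally I would make the action identical on all blocks. Fix $\mathcal{T}$-isomorphisms $\psi_i:N_1\to N_i$ and replace each by its polar part $\psi_i(\psi_i^{*}\psi_i)^{-1/2}$, where $\psi_i^{*}$ is the adjoint for the induced inner products and $\psi_i^{*}\psi_i$ is a positive element of the $*$-division-algebra $\mathrm{End}_{\mathcal{T}}(N_1)\in\{\R,\C,\mathbb{H}\}$, so that its positive square root exists inside it; this turns $\psi_i$ into an isometric $\mathcal{T}$-isomorphism. Transporting one orthonormal basis of $N_1$ through the $\psi_i$ gives orthonormal bases of all $N_i$ in which every $x\in\mathcal{T}$ has the same matrix $A(x)\in M_{\hat n}(\R)$; concatenating them defines $V\in O(n)$ with $V^{\tT}\mathcal{T}V=\{\diag(A,\dots,A):A\in\mathcal{T}^i\}$, where $\mathcal{T}^i:=\{A(x):x\in\mathcal{T}\}$ is a $*$-subalgebra of $M_{\hat n}(\R)$ (orthonormal bases preserve the transpose) and is irreducible because $S$ is simple. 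I expect the main obstacle to be precisely the recurring need to \emph{symmetrize}: both the replacement of an idempotent by a symmetric one with the same range and the replacement of a module isomorphism by an isometry are polar-decomposition arguments resting on positive-definiteness of the trace form and on the fact that positive elements of $\R,\C,\mathbb{H}$ have square roots within these algebras, and stating and proving these two lemmas cleanly is the technical heart of the argument.
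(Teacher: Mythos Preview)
The paper does not prove this theorem at all: it is quoted verbatim from \cite[Theorem~1]{murota2010numerical} and used as a black box in Appendix~\ref{appendix:block}. There is therefore no ``paper's own proof'' to compare against; the paper only \emph{uses} the result (together with \cite[Lemma~A.3, Lemma~A.4]{murota2010numerical}) to establish the uniqueness statement Theorem~\ref{uniqueness:block}.

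That said, your sketch is essentially the standard argument and is sound in outline. Two places deserve a little more care before it would pass as a full proof. First, your symmetrization of idempotents: you write down $h=pp^{\tT}+(e_0-p^{\tT})(e_0-p)$ and assert that ``a suitable combination of $p,p^{\tT},h^{-1}$'' is a symmetric idempotent with the same range as $p$, but you never exhibit the combination or verify it; the usual clean route is to set $z=e_0+(p-p^{\tT})(p^{\tT}-p)\in\mathcal{A}$, check $z\ge e_0$ on $\operatorname{ran}e_0$ and that $z$ commutes with both $p$ and $p^{\tT}$, and then take $q=pp^{\tT}z^{-1}$. Second, in the first part you should say explicitly how the possible zero block is absorbed into the statement (either assume $I_n\in\mathcal{T}$, as Murota et al.\ do, or allow one $\mathcal{T}_j=\{0\}$), since $\{0\}$ is not a simple algebra. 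The polar-decomposition step for the module isomorphisms is fine: $\psi_i^{*}\psi_i$ lies in the $*$-division algebra $\mathrm{End}_{\mathcal{T}}(N_1)\in\{\R,\C,\mathbb{H}\}$, where positive elements do have square roots.
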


\begin{remark}
Theorem \ref{block:murota} also illustrates that for two isomorphic matrix $*$-algebras $S_1\simeq S_2$ which are both contained in $M_n(\R)$ it is not necessarily true that they are isomorphic as vector spaces. Take e.g. the irreducible matrix $*$-algebra $S=M_m(\R)$ and $S_1 = \{\diag(A,0): A\in S\}\subset M_{2m}(\R)$ and $S_2=\{\diag(A,A):A\in S\}\subset M_{2m}(\R)$. 
\end{remark}
\begin{remark}\label{definitions:equiv}
All finest block diagonalizations correspond to a matrix $*$-algebra decomposition as in Theorem \ref{block:murota} i.e. the block of a finest block diagonalization are irreducible matrix $*$-algebras. Otherwise they could be further refined. By Lemma \ref{unique:simple} these irreducible blocks can be grouped into simple algebras algebras.  
\end{remark}
We start the rigorous proof with the following auxiliary lemma.
\begin{lemma}\label{onto:irr}
Let $U$ correspond to a finest block diagonalization of $H_1,\ldots, H_n$ with block sizes $(d_1,\ldots,d_k)$. Let $A$ be the matrix $*$-subalgebra of $M_d(\R)$ generated by $H_1,\ldots,H_n$. Then also $U^T AU$ is a matrix $*$-algebra and every $a\in A$ is blockdiagonal with block sizes $(d_1,\ldots,d_k)$. Furthermore the projection $\pi_i:U^TAU\to M_{d_i}(\R)$ is a $*$-algebra morphism onto an irreducible subalgebra of $M_{d_i}(\R)$.
\end{lemma}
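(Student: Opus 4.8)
The plan is to handle the three assertions in order: the first two are formal consequences of conjugation by $U$ being a $*$-automorphism, and the irreducibility in the third is the only substantive point, which will follow from the minimality built into the notion of finest joint block diagonalization.

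First I would recall, as in Remark~\ref{rem:generated_algebra}, that conjugation by the fixed matrix $U \in O(d)$, namely $\Phi(X) \coloneqq U^T X U$, is an $\R$-algebra automorphism of $M_d(\R)$ that commutes with transposition. Hence $\Phi(A) = U^T A U$ is again a matrix $*$-algebra, and in fact it is the matrix $*$-algebra generated by $U^T H_1 U, \ldots, U^T H_n U$. Next, since the set $\mathcal D$ of matrices that are block diagonal with block sizes $(d_1,\ldots,d_k)$ is itself a matrix $*$-subalgebra of $M_d(\R)$, and since each generator $U^T H_n U$ lies in $\mathcal D$ by the assumption on $U$, the whole algebra $U^T A U$ lies in $\mathcal D$. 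Thus every element of $U^T A U$ is block diagonal with block sizes $(d_1,\ldots,d_k)$, the $i$-th block projection $\pi_i : U^T A U \to M_{d_i}(\R)$ is well defined, and the identities $\pi_i(ab) = \pi_i(a)\pi_i(b)$ and $\pi_i(a^T) = \pi_i(a)^T$ (immediate from block-diagonality) show it is a morphism of matrix $*$-algebras; its image $\mathcal B_i \coloneqq \pi_i(U^T A U)$ is therefore a matrix $*$-subalgebra of $M_{d_i}(\R)$.

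It remains to prove that $\mathcal B_i$ is irreducible, and here I would argue by contradiction using that $U$ is minimal in the preordering of Definition~\ref{preorder}. Suppose the regular representation of $\mathcal B_i$ on $\R^{d_i}$ had a subrepresentation $W$ with $0 \neq W \subsetneq \R^{d_i}$. Because $\mathcal B_i$ is closed under transposition, $W^\perp$ is invariant as well (for $b \in \mathcal B_i$, $v \in W^\perp$, $w \in W$ one has $\langle b v, w\rangle = \langle v, b^T w\rangle = 0$, since $b^T \in \mathcal B_i$ and hence $b^T w \in W$), so $\R^{d_i} = W \oplus W^\perp$ is an orthogonal decomposition into $\mathcal B_i$-invariant subspaces with $\dim W, \dim W^\perp \ge 1$. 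Picking an orthonormal basis of $\R^{d_i}$ adapted to this splitting produces $Q_i \in O(d_i)$ such that every $Q_i^T b Q_i$, $b \in \mathcal B_i$, is block diagonal with block sizes $(\dim W, \dim W^\perp)$. Setting $Q \coloneqq \diag(I_{d_1},\ldots,I_{d_{i-1}},Q_i,I_{d_{i+1}},\ldots,I_{d_k}) \in O(d)$ and $U' \coloneqq U Q \in O(d)$, and using $U^T A U \subseteq \mathcal D$ together with the matching block structure of $Q$, we get that $(U')^T A U' = Q^T (U^T A U) Q$ is block diagonal with block sizes $(d_1,\ldots,d_{i-1},\dim W,\dim W^\perp,d_{i+1},\ldots,d_k)$; in particular the matrices $H_n \in A$ are jointly block diagonalized by $U'$ with this partition, which refines $(d_1,\ldots,d_k)$ and has strictly more parts, hence is $\prec (d_1,\ldots,d_k)$ in the preordering. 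This contradicts the assumption that $U$ corresponds to a \emph{finest} joint block diagonalization, so no such $W$ exists and $\mathcal B_i$ is irreducible.

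The main obstacle is exactly this last step: one has to turn an abstract proper subrepresentation of a single block into a genuinely finer \emph{joint} block diagonalization of the original matrices. The two ingredients that make it work are $*$-closedness of $\mathcal B_i$ — which upgrades the invariant subspace to an \emph{orthogonal} invariant splitting, so the refinement is realized by an element of $O(d_i)$ rather than merely a similarity — and the fact that $H_1,\ldots,H_n$ lie in the algebra $A$, so that refining the block structure of $U^T A U$ automatically refines that of the $U^T H_n U$; the contradiction then reduces to the observation that creating one additional block yields a strictly smaller element of $\mathcal N_d$ in the preorder.
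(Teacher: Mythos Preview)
Your proof is correct and follows the same overall strategy as the paper: the first two assertions are dispatched via Remark~\ref{rem:generated_algebra}, and irreducibility is obtained by contradiction, showing that a reducible block would yield a strictly finer joint block diagonalization.

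The only difference is in how the refinement is produced. The paper simply invokes Theorem~\ref{block:murota} as a black box (``if $\pi_i(U^TAU)$ is not irreducible, then by Theorem~\ref{block:murota} this block could be further refined by an orthogonal conjugation''), whereas you supply the underlying mechanism directly: $*$-closedness of $\mathcal B_i$ forces $W^\perp$ to be $\mathcal B_i$-invariant whenever $W$ is, so an adapted orthonormal basis gives the refining $Q_i\in O(d_i)$ explicitly. Your argument is thus more self-contained and elementary---it does not require the full Murota structure theorem, only the standard observation that invariant subspaces of $*$-closed matrix algebras have invariant orthogonal complements---while the paper's version is terser but leans on a heavier cited result. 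Both reach the same contradiction with minimality in the preorder of Definition~\ref{preorder}.
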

\begin{proof}
It is immediate that $U^TAU$ is a matrix $*$-algebra which is block diagonal and that $\pi_i$ is a $*$-algebra morphism, see also Remark \ref{rem:generated_algebra}. Assume that the image of $\pi_i$ was not irreducible. Then according to Theorem \ref{block:murota} this block could be further refined by an orthogonal conjugation which would contradict that $U$ corresponds to a finest block diagonalization.
\end{proof}

Next we make sure that the simple components of the Artin-Wedderburn decomposition show up as, not necessarily finest, blocks in the finest block decomposition.
\begin{lemma}\label{unique:simple}
Let $H_1,\ldots,H_n\in \R^{d\times d}$ and assume that $U\in O(d)$ corresponds to a finest block diagonalization. Let $A$ be the matrix $*$-algebra generated by $H_1,\ldots,H_n$ and let $d_1,\ldots,d_k$ be the size of the blocks. Then there exists unique (up to block permutation) simple matrix  $*$-algebras $\mathcal{T}_i\subseteq U^T A U$ such that
\begin{align}
	U^TAU=\{\diag(T_1,\ldots,T_k): T_i\in \mathcal{T}_i\}
\end{align}
\end{lemma}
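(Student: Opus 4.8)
\noindent\emph{Proof plan.}
The plan is to work inside the algebra $A$, exploit that matrix $*$-algebras are semisimple, and show that a \emph{finest} block diagonalization automatically lines up the simple components of $A$ along unions of its blocks. Put $B \coloneqq U^\tT A U$; by Remark~\ref{rem:generated_algebra} this is again a matrix $*$-algebra, and by Lemma~\ref{onto:irr} it is block diagonal, say with $r$ blocks, such that the image of each block projection $\pi_i$ is an irreducible subalgebra of the matrix algebra on the $i$-th block. By Remark~\ref{art:wedd}, $B$ is a semisimple $\R$-algebra, so it is the direct sum $B = \mathcal S_1 \oplus \cdots \oplus \mathcal S_k$ of its minimal two-sided ideals; the $\mathcal S_l$ are simple algebras, and I write $e_1,\ldots,e_k$ for their identities, which are orthogonal central idempotents of $B$ with $\sum_l e_l = I_d$ and $\mathcal S_l = e_l B$.

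The heart of the argument is to show that each $e_l$ is \emph{block-scalar}, i.e.\ for every $i$ the matrix $\pi_i(e_l)$ is either zero or the identity of block $i$. Indeed, $\pi_i(e_l)$ is a central idempotent of $\pi_i(B)$, so its range is a $\pi_i(B)$-invariant subspace; irreducibility of $\pi_i(B)$ forces this range to be trivial or the whole block space, whence $\pi_i(e_l)$ is $0$ or the identity, respectively. Since the $e_l$ are orthogonal idempotents summing to $I_d$, for each block $i$ there is exactly one index $l(i)$ with $\pi_i(e_{l(i)})$ equal to the identity of block $i$, which partitions $\{1,\ldots,r\}$ into groups $I_l \coloneqq \{ i : l(i) = l\}$. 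From $\mathcal S_l = e_l B$ and block-scalarity one reads off that $\mathcal S_l$ is exactly the set of matrices in $B$ whose blocks outside $I_l$ vanish. After permuting the $r$ blocks so that each $I_l$ is a contiguous run, the restriction $\mathcal T_l$ of $\mathcal S_l$ to the blocks in $I_l$ is a $*$-closed subalgebra isomorphic to $\mathcal S_l$, hence simple, and $B = \{\diag(T_1,\ldots,T_k) : T_l \in \mathcal T_l\}$, which is the asserted decomposition.

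For uniqueness up to block permutation, I would argue as follows. Given any decomposition $B = \{\diag(T_1,\ldots,T_k) : T_l \in \mathcal T_l\}$ with simple $\mathcal T_l$, each $\widetilde{\mathcal T}_l \coloneqq \{\diag(0,\ldots,T_l,\ldots,0) : T_l \in \mathcal T_l\}$ is a two-sided ideal of $B$ which, as an algebra, is isomorphic to $\mathcal T_l$ and therefore simple; since a nonzero two-sided ideal of $B$ contained in $\widetilde{\mathcal T}_l$ is in particular a two-sided ideal of the simple algebra $\widetilde{\mathcal T}_l$ and hence equals it, $\widetilde{\mathcal T}_l$ is a minimal two-sided ideal of $B$, and $B = \bigoplus_l \widetilde{\mathcal T}_l$. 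Because the minimal two-sided ideals of a semisimple algebra are intrinsic — distinct ones intersect trivially, annihilate each other, and together span the algebra — any two such decompositions have the same collection of summands up to a permutation.

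The step I expect to be the main obstacle is the block-scalarity of the central idempotents: it is precisely here that the irreducibility furnished by Lemma~\ref{onto:irr} (which in turn rests on $U$ coming from a \emph{finest} block diagonalization) is indispensable, since otherwise a single block could carry several simple components and the grouping would break down. One should also take some care, in the uniqueness part, with the passage between ``two-sided ideal of $B$ contained in $\widetilde{\mathcal T}_l$'' and ``two-sided ideal of $\widetilde{\mathcal T}_l$'', and note that blocks on which $A$ vanishes identically are necessarily of size one and may be suppressed without affecting the statement.
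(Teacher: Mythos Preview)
Your proof is correct and follows essentially the same approach as the paper: both set $B=U^\tT A U$, invoke the Artin--Wedderburn decomposition of the semisimple algebra $B$ into simple pieces, and then use the irreducibility of each block projection $\pi_j$ (Lemma~\ref{onto:irr}) to conclude that each simple component is supported on a union of blocks. The only cosmetic difference is that the paper argues via Schur's lemma (each $\pi_j|_{\mathcal R_i}$ is zero or an isomorphism, and orthogonality of the $\mathcal R_i$ forces at most one to survive on each block), whereas you phrase the same dichotomy in terms of the central idempotents $e_l$ being block-scalar; your treatment of uniqueness and of possible zero blocks is more explicit than the paper's.
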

\begin{proof}
Let $B=U^TAU$ and let $B\simeq \oplus_{i=1}^k \mathcal R_i$ be the Artin-Wedderburn decomposition into simple $B$ modules which are also subalgebras (without 1), which exists by Remark \ref{art:wedd}. By abuse of notation denote by $\mathcal R_i$ also the corresponding subalgebra of  $B$. Let $\pi_j$ be the projection of $B$ onto the $j$-th block. By Corollary \ref{onto:irr} the image of $\pi_j$ is irreducible and hence in particular simple. Thus Schur's lemma implies that $\pi_j\Big|_{\mathcal R_i}$ is either zero or an isomorphism onto the block. Since $\mathcal R_i$ are orthogonal, we have that if for indices $n,m$ we have that both $\pi_j\big|_{\mathcal R_n}\neq0$ and $\pi_j\big|_{\mathcal R_m}\neq0$, then $n=m$ and the claim follows.
\end{proof}

\begin{lemma}\label{simple:same}
Let $U_1,U_2$ correspond to finest block diagonalizations and denote by $B_i$ the matrix $*$-algebras $U_i^TAU_i$. Denote by $\mathcal{T}_{1,1},\ldots,\mathcal{T}_{1,k_1}$ resp. $\mathcal{T}_{2,1},\ldots,\mathcal{T}_{2,k_2}$ be the simple matrix $*$ algebras from Lemma \ref{unique:simple}. Then $k_1=k_2=:k$ and for $\varphi:B_1\to B_2$ defined by $\varphi(x)=U_2^TU_1x U_1^TU_2$ there exists a permutation $\sigma\in \Sigma(k)$ such that $\varphi(\mathcal{T}_{1,j})=\mathcal{T}_{2,\sigma(j)}$ and $\varphi:\mathcal{T}_{1,j}\to \mathcal{T}_{\sigma(j),2}$ is an isomorphism of matrix $*$ algebras.
\end{lemma}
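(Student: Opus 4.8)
The plan is to deduce the statement from the uniqueness of the Artin--Wedderburn decomposition. The first step is to check that $\varphi$ is an isomorphism of matrix $*$-algebras $B_1\to B_2$: writing $W\coloneqq U_1^\tT U_2\in O(d)$, the map $\varphi$ is the conjugation $x\mapsto W^\tT x W$, hence an algebra automorphism of $M_d(\R)$ satisfying $\varphi(x^\tT)=W^\tT x^\tT W=\varphi(x)^\tT$, with inverse $y\mapsto WyW^\tT$; and since $B_1=U_1^\tT A U_1$ one gets $\varphi(B_1)=U_2^\tT U_1U_1^\tT A U_1U_1^\tT U_2=U_2^\tT A U_2=B_2$. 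So $\varphi$ is indeed an isomorphism of matrix $*$-algebras between $B_1$ and $B_2$.

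Next I would identify the $\mathcal{T}_{i,j}$ with the minimal two-sided ideals of $B_i$. By Lemma~\ref{unique:simple}, $B_i=\{\diag(T_1,\dots,T_{k_i}):T_j\in\mathcal{T}_{i,j}\}$ is the internal direct product of the simple subalgebras $\mathcal{T}_{i,1},\dots,\mathcal{T}_{i,k_i}$, which are supported on pairwise disjoint blocks. Each $\mathcal{T}_{i,j}$ is then a two-sided ideal of $B_i$ (multiplying by an element of $B_i$ cannot leave its block), and it is minimal since any two-sided ideal of $B_i$ contained in $\mathcal{T}_{i,j}$ is automatically a two-sided ideal of the simple algebra $\mathcal{T}_{i,j}$, hence $0$ or $\mathcal{T}_{i,j}$. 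Conversely, by the uniqueness part of the Artin--Wedderburn theorem (Remark~\ref{art:wedd}, see also \cite[Theorem~5.2.4]{cohn2012basic}) every minimal two-sided ideal of $B_i$ is one of the $\mathcal{T}_{i,j}$; thus $k_i$ equals the number of minimal two-sided ideals of $B_i$, a description that makes no reference to the particular block realization.

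The core step is then immediate: an algebra isomorphism carries the lattice of two-sided ideals of its source bijectively onto that of its target, hence sends minimal two-sided ideals to minimal two-sided ideals. Applying this to $\varphi$, each $\varphi(\mathcal{T}_{1,j})$ is a minimal two-sided ideal of $B_2$, hence equals $\mathcal{T}_{2,\sigma(j)}$ for a uniquely determined index $\sigma(j)$; running the same argument for $\varphi^{-1}$ shows $j\mapsto\sigma(j)$ is a bijection $[k_1]\to[k_2]$, so $k_1=k_2=:k$ and $\sigma\in\Sigma(k)$. Restricting $\varphi$ to $\mathcal{T}_{1,j}$ then yields a bijective algebra homomorphism onto $\mathcal{T}_{2,\sigma(j)}$, and since $\mathcal{T}_{1,j}$ is closed under transpose (it is a matrix $*$-algebra by Lemma~\ref{unique:simple}) and $\varphi$ intertwines transposes on all of $B_1$, this restriction is an isomorphism of matrix $*$-algebras.

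I do not anticipate a serious obstacle. The two points that require care are that the orthogonal conjugation defining $\varphi$ genuinely respects the transpose, and that the $\mathcal{T}_{i,j}$ are exactly the minimal two-sided ideals of $B_i$ --- the latter being precisely the coordinate-free characterization one needs here, since $\varphi$ is not a priori compatible with the finest block partitions of $B_1$ and $B_2$.
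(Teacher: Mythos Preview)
Your proof is correct and takes a genuinely different route from the paper's. The paper first invokes Artin--Wedderburn to get $k_1=k_2$, then applies Schur's lemma to the compositions $\pi_{\mathcal{T}_{2,m}}\circ\varphi|_{\mathcal{T}_{1,n}}$: each such map between simple algebras is either zero or an isomorphism, and since $\mathcal{T}_{1,m}\cdot\mathcal{T}_{1,n}=0$ for $m\neq n$, at most one $\mathcal{T}_{1,m}$ can project nontrivially onto a given $\mathcal{T}_{2,j}$. A counting argument (each $I_j\coloneqq\{n:\pi_{\mathcal{T}_{2,n}}\circ\varphi|_{\mathcal{T}_{1,j}}\neq0\}$ is nonempty and the $I_j$ partition $[k_2]$) then forces $|I_j|=1$. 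You instead characterise the $\mathcal{T}_{i,j}$ intrinsically as the minimal nonzero two-sided ideals of $B_i$ and use that any algebra isomorphism permutes these; this bypasses the projection analysis and the separate appeal to Artin--Wedderburn for $k_1=k_2$, yielding a shorter argument. The paper's approach, on the other hand, stays closer to the representation-theoretic viewpoint used in the surrounding lemmas (Lemma~\ref{onto:irr}, Proposition~\ref{irr:v}), where Schur-type reasoning on block projections is the recurring tool.
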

\begin{proof}
By Artin-Wedderburn $k_1=k_2$. By Schur's lemma we have that $\pi_{T_{2,m}}\circ\varphi\Big|_{\mathcal{T}_{1,n}}$ is either an isomorphism or zero. Since the $T_{1,l}$ are orthogonal we have that if for $m,n$ it holds that both $\pi_{T_{2,j}}\circ\varphi\Big|_{\mathcal{T}_{1,m}}\neq0$ and $\wedge \pi_{T_{2,j}}\circ\varphi\Big|_{\mathcal{T}_{1,n}}\neq 0$ then $m=n$. Hence there exists a partition $[k_2]=\cup_{j=1}^{k_1} I_j$ with $I_j=\{n\in[k_2]: \pi_{T_{2,n}}\circ\varphi\Big|_{\mathcal{T}_{1,j}}\neq 0\}$. Since $\varphi$ is an isomporhism we know that $|I_j|\geq 1$. Using that $k_1=k_2$ we can conclude that $|I_j|=1$ and thus the claim.
\end{proof}

\begin{proposition}\label{irr:v}
Let $\mathcal{S}\subseteq M_n(\R)$ be an irreducible matrix $*$ subalgebra and let $\mathcal{T}\subseteq M_{\tilde{n}}$ be a simple matrix $*$ subalgebra isomporhic to $\mathcal S$ via a matrix $*$ morphism $\varphi:\mathcal S\to \mathcal T$. Assume furthermore that $\mathcal{T}\subseteq \{\diag(A_1,\ldots,A_k):A_i\in M_n(\R)\}$. Then there exists $V_1,\ldots,V_k\in O(n)$ such that for $V=\diag(V_1,\ldots,V_k)$ we have that $\varphi(x)=V^T\diag(x,\ldots,x)V$.
\end{proposition}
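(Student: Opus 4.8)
The plan is to split $\varphi$ into its block components, handle each block by the classical fact that two isomorphic irreducible $*$-representations on spaces of the same dimension are orthogonally equivalent, and then reassemble. Write $\varphi(x)=\diag(\psi_1(x),\dots,\psi_k(x))$, where $\psi_j\coloneqq\pi_j\circ\varphi\colon\mathcal S\to M_n(\R)$ is a morphism of matrix $*$-algebras; it then suffices to produce, for each $j$, a matrix $V_j\in O(n)$ with $\psi_j(x)=V_j^{T}xV_j$ for all $x\in\mathcal S$, and to set $V\coloneqq\diag(V_1,\dots,V_k)$. First I would record that each $\psi_j$ is nonzero: this has to be assumed, but it holds automatically in the situation where the proposition is applied, since there the blocks come from a finest block diagonalization and are therefore irreducible (cf.\ Lemma~\ref{onto:irr}); and it is genuinely needed, as otherwise the $j$-th block of $\varphi(x)$ would vanish identically and the asserted form would be impossible. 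Since matrix $*$-algebras are semisimple (Remark~\ref{art:wedd}) and $\mathcal S$ acts faithfully and irreducibly on $\R^n$, the algebra $\mathcal S$ is simple; hence $\ker\psi_j$ is a two-sided ideal equal to $\{0\}$, so each $\psi_j$ is an injective $*$-morphism.

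Next I would fix $j$, write $\psi=\psi_j$, and compare the two $\mathcal S$-module structures on $\R^n$ given by $\rho_1(x)=x$ and $\rho_2=\psi$. By hypothesis $(\R^n,\rho_1)$ is simple, so it is, up to isomorphism, the unique simple $\mathcal S$-module, and every simple $\mathcal S$-module has $\R$-dimension $n$. The module $(\R^n,\rho_2)$ is nonzero, hence by semisimplicity a direct sum of $t\ge 1$ copies of the simple module; comparing $\R$-dimensions gives $n=tn$, so $t=1$. Therefore $\rho_2$ is irreducible and isomorphic to $\rho_1$, which yields an invertible $T\in M_n(\R)$ with $Tx=\psi(x)T$, i.e.\ $\psi(x)=TxT^{-1}$, for all $x\in\mathcal S$.

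Then I would use the $*$-structure to turn $T$ into an orthogonal matrix. From $\psi(x^{T})=\psi(x)^{T}$ one obtains $Tx^{T}T^{-1}=T^{-T}x^{T}T^{T}$ for all $x\in\mathcal S$; since $\mathcal S$ is closed under transposition, this says $T^{T}T$ commutes with every element of $\mathcal S$, i.e.\ $T^{T}T$ lies in the commutant subalgebra $\mathcal S'$. As $T^{T}T$ is symmetric positive definite, its positive square root $S$ is a polynomial in $T^{T}T$ and therefore also lies in $\mathcal S'$, so $S$ commutes with $\mathcal S$ and $S^{2}=T^{T}T$. Setting $V_j\coloneqq ST^{-1}$, we get $V_j^{T}V_j=T^{-T}S^{2}T^{-1}=T^{-T}(T^{T}T)T^{-1}=I$, so $V_j\in O(n)$, and $V_j^{T}xV_j=T^{-T}SxST^{-1}=T^{-T}(T^{T}T)xT^{-1}=TxT^{-1}=\psi(x)$, using $Sx=xS$ and $S^{2}=T^{T}T$. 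Assembling $V=\diag(V_1,\dots,V_k)$ finishes the proof.

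The \textbf{main obstacle} is the dimension count in the second step: one must exclude that $\psi_j$ maps $\mathcal S$ into $M_n(\R)$ ``with multiplicity'', i.e.\ that its image acts on $\R^n$ as several copies of a lower-dimensional irreducible representation. This is exactly where the hypothesis is used that each block of $\mathcal T$ has size $n$ — the very dimension on which $\mathcal S$ already acts irreducibly — together with the uniqueness of the simple module over a simple algebra. The polar-type computation in the last step, and the bookkeeping with the projections $\pi_j$, are routine by comparison.
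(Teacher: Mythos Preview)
Your proof is correct and follows the same two-step skeleton as the paper --- produce an invertible intertwiner on each block, then upgrade it to an orthogonal one --- but you implement both steps more explicitly. Where the paper invokes Skolem--Noether to obtain $W_i\in GL_n(\R)$ with $\pi_i\circ\varphi(x)=W_i^{-1}xW_i$, you instead argue directly via module theory: $\mathcal S$ is simple, hence has a unique simple module of $\R$-dimension $n$, and the dimension count forces $(\R^n,\psi_j)$ to be that module, yielding the intertwiner $T$. This is more elementary and sidesteps the question of whether $\mathcal S$ is \emph{central} simple over $\R$, which a naive appeal to Skolem--Noether would require. For the orthogonalization step the paper simply cites \cite[Lemma~A.4]{murota2010numerical}, whereas you give the polar-type argument in full (showing $T^{T}T\in\mathcal S'$, taking its square root, and checking $V_j=ST^{-1}$ works); your version is self-contained and makes the role of the $*$-structure transparent. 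Finally, your observation that each $\psi_j$ must be assumed nonzero is well taken: the proposition as stated is false without it (e.g.\ $\mathcal T=\{\diag(A,0):A\in M_n(\R)\}$), and the paper's own proof tacitly uses this when it writes $\varphi(x)=W^{-1}\diag(x,\ldots,x)W$. As you note, the assumption does hold in the only place the proposition is applied (Corollary~\ref{simple:irred}), where the blocks arise from a finest block diagonalization.
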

\begin{proof}
By Skolem-Noether there exists matrices $W_1,\ldots,W_k\in GL_n(\R)$ such that for $W=\diag(W_1,\ldots,W_k)$ we have that $\varphi(x)=W^{-1}\diag(x,\ldots,x)W$. This implies that for $\pi_i$ the projection of the $k$-th block we have that $\pi_i\circ \varphi$ is an isomorphism of matrix $*$ representations via the map $W_i:\R^n\to \R^n$. Thus by \cite[Lemma A.4]{murota2010numerical} there exists $V_i\in O(n)$ such that $\pi_i\circ \varphi(x)=V^TxV$ and thus the claim.
\end{proof}

\begin{corollary}\label{simple:irred}
Let $\mathcal{T}_1\subseteq M_n(\R)$ and $\mathcal{T}_2\subseteq M_m(\R)$ be simple isomorphic matrix $*$ algebra with isomorphism $\varphi$. Assume furthermore that $\mathcal{T}_i\subseteq \{\diag(A_1,\ldots,A_{k_i}):A_j\in M_{\tilde{n}}(\R)\}$ such that the image of the projection onto the $\ell$-th block is an irreducible representation. Then $\varphi$ induces an isomorphism $\varphi_{i,j}:\pi_i (\mathcal{T}_1)\to \pi_j(\mathcal{T}_2)$ and there exists a $V\in O(\tilde{n})$ such that $\varphi_{i,j}(x)=V^TxV$.
\end{corollary}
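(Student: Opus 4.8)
The plan is to reduce the statement to Proposition \ref{irr:v} by first checking that the block projections restrict to isomorphisms on the simple algebras $\mathcal{T}_1$ and $\mathcal{T}_2$, and then transporting $\varphi$ through these isomorphisms.

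First I would show that for each admissible block index $\ell$ the projection $\pi_\ell$ restricted to $\mathcal{T}_1$ is injective. Indeed, $\pi_\ell\colon \mathcal{T}_1 \to M_{\tilde n}(\R)$ is an $\R$-algebra morphism, so $\ker \pi_\ell$ is a two-sided ideal of $\mathcal{T}_1$; since $\mathcal{T}_1$ is simple this ideal is either $\{0\}$ or all of $\mathcal{T}_1$, and the latter is impossible because, by hypothesis, $\pi_\ell(\mathcal{T}_1)$ is an irreducible (in particular nonzero) matrix $*$-algebra. As $\pi_\ell$ is trivially surjective onto its image and respects the transpose (the transpose of a block-diagonal matrix is block-diagonal with transposed blocks), $\pi_i|_{\mathcal{T}_1}\colon \mathcal{T}_1 \to \pi_i(\mathcal{T}_1)$ is an isomorphism of matrix $*$-algebras. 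The same argument applied to the simple algebra $\mathcal{T}_2$ gives that $\pi_j|_{\mathcal{T}_2}\colon \mathcal{T}_2 \to \pi_j(\mathcal{T}_2)$ is an isomorphism of matrix $*$-algebras.

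Next I would define $\varphi_{i,j} := \pi_j \circ \varphi \circ \bigl(\pi_i|_{\mathcal{T}_1}\bigr)^{-1}\colon \pi_i(\mathcal{T}_1) \to \pi_j(\mathcal{T}_2)$. Being the composition of the inverse of a matrix $*$-algebra isomorphism, of the given matrix $*$-algebra isomorphism $\varphi\colon \mathcal{T}_1 \to \mathcal{T}_2$, and of the matrix $*$-algebra isomorphism $\pi_j|_{\mathcal{T}_2}$, it is itself a matrix $*$-algebra isomorphism, and by construction it satisfies $\pi_j \circ \varphi = \varphi_{i,j} \circ \pi_i$; this is the claimed induced isomorphism. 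Now both $\pi_i(\mathcal{T}_1)$ and $\pi_j(\mathcal{T}_2)$ are irreducible matrix $*$-subalgebras of $M_{\tilde n}(\R)$, so Proposition \ref{irr:v} applies with $\mathcal{S} = \pi_i(\mathcal{T}_1)$, $\mathcal{T} = \pi_j(\mathcal{T}_2)$ and $k=1$ (a single block), yielding a matrix $V \in O(\tilde n)$ with $\varphi_{i,j}(x) = V^{T} x V$ for all $x \in \pi_i(\mathcal{T}_1)$, which is exactly the assertion.

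The only genuinely delicate point is the injectivity of the block projections on the simple algebras, which hinges on combining simplicity of $\mathcal{T}_1$, $\mathcal{T}_2$ with the nonvanishing of the blocks guaranteed by irreducibility; the remaining representation-theoretic content (Skolem--Noether together with the orthogonalization step, \cite[Lemma A.4]{murota2010numerical}) has already been absorbed into Proposition \ref{irr:v}, so the rest of the argument is bookkeeping.
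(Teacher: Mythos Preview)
Your proof is correct and, like the paper's, reduces the claim to Proposition \ref{irr:v}; the difference lies only in the bookkeeping. The paper introduces an auxiliary irreducible matrix $*$-algebra $\mathcal S$ together with an isomorphism $\tau\colon \mathcal S\to\mathcal T_1$ (whose existence comes from Theorem \ref{block:murota}), applies Proposition \ref{irr:v} in its multi-block form to both $\tau$ and $\varphi\circ\tau$, reads off orthogonal matrices $V_1,V_2$ with $\pi_i\circ\tau(x)=V_1^TxV_1$ and $\pi_j\circ\varphi\circ\tau(x)=V_2^TxV_2$, and sets $V=V_1^{-1}V_2$. You instead first argue directly from simplicity that $\pi_i|_{\mathcal T_1}$ is a $*$-isomorphism, define $\varphi_{i,j}$ explicitly, and invoke Proposition \ref{irr:v} once in the degenerate case $k=1$. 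Your route is slightly more self-contained (it spells out why the induced map $\varphi_{i,j}$ exists and is a $*$-isomorphism, which the paper's proof leaves implicit), while the paper's route avoids inverting $\pi_i$ by pushing everything through the auxiliary $\mathcal S$. Both arguments ultimately rest on the same Skolem--Noether/orthogonalization content packaged in Proposition \ref{irr:v}.
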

\begin{proof}
There exists an irreducible matrix $*$ algebra $\mathcal{S}$ and an isomorphism $\tau:\mathcal{S}\to \mathcal{T}_1$. By Proposition \ref{irr:v} we have that there exists $V_1,V_2$
such that $\pi_i\circ \tau(x)=V_1^Tx V_1$ and $\pi_j\circ \varphi\circ\tau(x)=V_2^TxV_2$. Thus we can use  $V:=V_1^{-1}V_2$.
\end{proof}

\begin{proposition}\label{decomp:finally}
Let $U_1,U_2$ correspond to finest block diagonalizations and denote by $B_1,B_2$ the matrix $*$ algebras $U_1^TAU_1$ resp. $U_2^TAU_2$. Let $d_{1,j},\ldots,d_{k_j,j}$ be the blocks sizes of $B_j$ and let $\varphi:B_1\to B_2$ be defined by $\varphi(x)= U_2^TU_1xU_1^TU_2$. Then $k_1=k_2=:k$ and there exists a permutation $\sigma\in\Sigma(k)$ such that $d_{i,1}=d_{\sigma(i),2}$ and a block permutation matrix $P_{\sigma}$ and $V_1,\ldots,V_k, V_i\in O(d_{i,1})$ such that $\varphi(x)=V^TP_{\sigma}^TxP_{\sigma}V$.
\end{proposition}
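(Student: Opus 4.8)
The plan is to chain together Lemma~\ref{unique:simple}, Lemma~\ref{simple:same} and Corollary~\ref{simple:irred}, working through two nested layers of block structure --- the decomposition of $B_1$ and $B_2$ into simple matrix $*$-algebra components, and, inside each such component, the further decomposition into irreducible (finest) blocks. Throughout one uses that $\varphi(x)=(U_1^\tT U_2)^\tT x\,(U_1^\tT U_2)$ is an isomorphism of matrix $*$-algebras $B_1\to B_2$, which is immediate from Remark~\ref{rem:generated_algebra}. First I would apply Lemma~\ref{unique:simple} to both $U_1$ and $U_2$ to write
\[
B_i=\{\diag(T_{i,1},\ldots,T_{i,m_i}):T_{i,j}\in\mathcal{T}_{i,j}\},
\]
where each $\mathcal{T}_{i,j}$ is a simple matrix $*$-subalgebra supported on a union of finest blocks of $B_i$. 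Lemma~\ref{simple:same} then gives $m_1=m_2=:m$, a permutation $\rho\in\Sigma(m)$ with $\varphi(\mathcal{T}_{1,j})=\mathcal{T}_{2,\rho(j)}$, and the fact that each restriction $\varphi_j:=\varphi|_{\mathcal{T}_{1,j}}\colon\mathcal{T}_{1,j}\to\mathcal{T}_{2,\rho(j)}$ is an isomorphism of matrix $*$-algebras.

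Next, for each $j$ I would descend to the finest blocks sitting inside the simple components. By Lemma~\ref{onto:irr}, the projection of $B_i$ --- hence of $\mathcal{T}_{i,j}$ --- onto each of its finest blocks lands in an \emph{irreducible} subalgebra; moreover all irreducible blocks inside one simple component afford equivalent irreducible representations (cf.\ the second part of Theorem~\ref{block:murota}) and hence have the same size. Applying Corollary~\ref{simple:irred} to $\varphi_j$ therefore yields: $\mathcal{T}_{1,j}$ and $\mathcal{T}_{2,\rho(j)}$ have the same number of finest blocks and the same list of block sizes, and there is a bijection $\tau_j$ between these blocks together with orthogonal matrices $V_{j,\ell}$ such that, after the internal reordering $\tau_j$, $\varphi_j$ acts blockwise by conjugation $y\mapsto V_{j,\ell}^\tT y\,V_{j,\ell}$. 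Summing the block counts over $j$ gives $k_1=k_2=:k$, and composing $\rho$ with the $\tau_j$ produces a permutation $\sigma\in\Sigma(k)$ of the $k$ finest blocks with $d_{i,1}=d_{\sigma(i),2}$.

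Finally I would assemble everything globally: let $P_\sigma$ be the block permutation matrix realizing $\sigma$ on the $k$ finest blocks, and let $V:=\diag(V_1,\ldots,V_k)\in O(d)$ collect the orthogonal blocks $V_i\in O(d_{i,1})$. Checking the identity block by block --- on a finest block of $B_1$, $\varphi$ first transports it to block $\sigma(\cdot)$ of $B_2$ and then conjugates by the matching $V_i$ --- gives $\varphi(x)=V^\tT P_\sigma^\tT x\,P_\sigma V$ for all $x\in B_1$, which is the assertion. Specializing to $x=U_1^\tT H_n U_1\in B_1$ then recovers the statement of Theorem~\ref{uniqueness:block}.

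The main obstacle is organizational rather than conceptual: one must keep the two nested block structures consistent and compose the component permutation $\rho$ with the internal permutations $\tau_j$ into a single coherent permutation $\sigma$ of finest blocks, while carefully verifying that the hypotheses of Corollary~\ref{simple:irred} --- irreducibility of the block projections (from Lemma~\ref{onto:irr}) and equality of the block sizes within each simple component --- are genuinely met. All the substantive algebra (Schur's lemma, Skolem--Noether, and the orthogonalization step of Proposition~\ref{irr:v}) is already packaged in the preceding lemmas, so no further structural input is needed.
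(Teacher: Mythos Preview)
Your proposal is correct and follows essentially the same route as the paper's own proof: invoke Lemma~\ref{unique:simple} for the simple-component decompositions of $B_1,B_2$, apply Lemma~\ref{simple:same} to match simple components via a permutation, and then use Corollary~\ref{simple:irred} on each matched pair to produce the orthogonal conjugators on the finest blocks. Your write-up is more explicit than the paper's terse four-line argument about the nested bookkeeping (composing the simple-component permutation with the internal block bijections into a single $\sigma$), but no new ingredient is introduced.
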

\begin{proof}
By Lemma \ref{unique:simple} we know that there exists a decomposition of $B_1,B_2$ into simple algebras which have disjoint block form. By Lemma \ref{simple:same} there exists a permutation of simple blocks such that $\varphi$ maps a simple block isomorphically to exactly one simple block. Since $\varphi$ does preserve the rank of matrices the corresponding simple blocks must be of same dimension. By Corollary \ref{simple:irred} the claim follows.
\end{proof}

%----------------------------------------------------------------------
\section{Algorithmic Details}\label{sec43}

\subsection{Grid search and initialization close to a global minimum}\label{subsec:grid_search}

According to Lemma \ref{lem:geo_convex}, the loss function $\ell_\varepsilon$ is not geodesically convex, and, thus, convergence to a global solution is not guaranteed. By Theorem \ref{thm: manifold optimization local}, the Riemannian gradient descent method converges locally with suitable step sizes. To find a good starting point $U\in SO(d)$, we establish a grid search procedure on $SO(d)$. Its main idea is to compute the loss on the discrete subset $\Gamma \subset SO(d)$ and initialize the gradient-descent method with the minimizer of
\begin{equation}\label{eq:P_grid_search}
U = \argmin_{V \in \Gamma} \ell_\varepsilon(V).
\end{equation}
% In view of Proposition \ref{pro:restrict_SOd} iii), we construct a suitable $\Gamma$ as a subset of $SO(d)$. 
The benefit of working with $SO(d)$ is that it can be parametrized by $d(d-1)/2$ angles. Let us consider the Jacobi rotation matrix with parameters $r \in [d-1]$ and $\alpha \in [0,2\pi)$ given by
\begin{equation}
\begin{aligned} \label{eq: rotation def}
	R_{r,r}(r,\alpha)= \cos(\alpha) &\quad  R_{r, r+1}(r,\theta) = -\sin(\alpha) & \quad R_{j,j}(r,\alpha) =1, \ j \notin \{r,r+1\}, \\
	R_{r+1, r}(r,\alpha)=\sin(\alpha), & \quad R_{r+1, r+1}(r,\alpha) = \cos(\alpha), &  R_{i,j}(r,\alpha) =0 \ \text{otherwise.} 
	% R_{j,j}(r,\alpha)=1, \ \text{if} j \notin \{r,r+1\} & \text{and} & R_{i,j}(r,\alpha)=0 \ \text{otherwise}.
\end{aligned} 
\end{equation}
Then, every $SO(d)$ matrix factorizes into a product of $d(d-1)/2$ Jacobi matrices. 
\begin{proposition}[{\cite{Hurwitz1897}}]\label{prop:recursiv_def_SOd}
Let $d \geq 2$. For every $U \in SO(d)$, there exist vectors
\[
\alpha^r = (\alpha^r_{1}, \cdots, \alpha^r_{r}) \in [0, 2\pi) \times [0, \pi)^{r-1}, \quad r \in [d-1],
\]
such that
\begin{equation}\label{eq: rotation parametrization}
	U = U(\alpha) = \prod_{r=1}^{d-1} \prod_{j=1}^{r} R(d - 1 - r + j, \alpha^r_{j}).
\end{equation}
\end{proposition}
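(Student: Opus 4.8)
The plan is to prove the statement by induction on $d$, splitting off one ``spherical chain'' of $d-1$ Jacobi rotations at each step. The base case $d=2$ is immediate: by \eqref{eq: rotation def} one has $SO(2)=\{R(1,\alpha):\alpha\in[0,2\pi)\}$, which is exactly \eqref{eq: rotation parametrization} for $d=2$.

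For the inductive step, fix $d\ge 3$, assume the statement holds in dimension $d-1$, and take $U\in SO(d)$. The key geometric input I would use is that the chain
\[
C(\beta_1,\dots,\beta_{d-1}) := R(1,\beta_1)\,R(2,\beta_2)\cdots R(d-1,\beta_{d-1}),
\]
a product of Jacobi rotations acting in the successive coordinate planes $(1,2),(2,3),\dots,(d-1,d)$, has the property that its first row sweeps out the whole unit sphere $\mathbb{S}^{d-1}\subset\R^d$: expanding the product shows that the entries of $e_1^{\tT}C(\beta)$ are, up to fixed signs, the usual spherical coordinates associated with the angles $\beta_1,\dots,\beta_{d-1}$, and the standard fact is that these cover $\mathbb{S}^{d-1}$ surjectively precisely when one of the $\beta_j$ ranges over $[0,2\pi)$ and the remaining ones over $[0,\pi)$. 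Using this, I would choose $\alpha^{d-1}\in[0,2\pi)\times[0,\pi)^{d-2}$ so that the first row of $C(\alpha^{d-1})$ equals the first row of $U$. Then $\tilde U := U\,C(\alpha^{d-1})^{-1}$ is orthogonal with first row $e_1^{\tT}$; since an orthogonal matrix whose first row is $e_1^{\tT}$ automatically has first column $e_1$, it follows that $\tilde U=\diag(1,W)$ with $W\in O(d-1)$, and $\det W=\det\tilde U=1$, so $W\in SO(d-1)$.

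Next I would apply the induction hypothesis to $W$, obtaining angle vectors $\alpha^1,\dots,\alpha^{d-2}$ with $\alpha^r\in[0,2\pi)\times[0,\pi)^{r-1}$ such that $W$ is given by \eqref{eq: rotation parametrization} in dimension $d-1$. Embedding $SO(d-1)$ into $SO(d)$ via $W\mapsto\diag(1,W)$ — that is, letting $SO(d-1)$ act on the coordinates $2,\dots,d$ — turns the Jacobi rotation in the plane $(k,k+1)$ of $\R^{d-1}$ into the Jacobi rotation in the plane $(k+1,k+2)$ of $\R^d$, so every index occurring in the parametrization of $W$ shifts up by one; a direct check then gives $\diag(1,W)=\prod_{r=1}^{d-2}\prod_{j=1}^{r}R(d-1-r+j,\alpha^r_j)$, which is precisely the $r=1,\dots,d-2$ part of \eqref{eq: rotation parametrization} in dimension $d$. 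Since $C(\alpha^{d-1})=\prod_{j=1}^{d-1}R(j,\alpha^{d-1}_j)$ is the $r=d-1$ block, combining with $U=\diag(1,W)\,C(\alpha^{d-1})$ yields \eqref{eq: rotation parametrization}, completing the induction.

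I expect the main obstacle to be not the algebra but the careful bookkeeping of the two reductions: checking precisely that the first row of the chain $C$ surjects onto $\mathbb{S}^{d-1}$ with exactly the asserted angle ranges (in particular, that the $[0,2\pi)$ factor ends up in the first slot of $\alpha^{d-1}$ under the product convention fixed by \eqref{eq: rotation def} and \eqref{eq: rotation parametrization}), and tracking the index shift so that the reindexed blocks of the $(d-1)$-dimensional parametrization align with the pattern $d-1-r+j$ in dimension $d$. Note that only surjectivity of the parametrization is needed here, not injectivity, which keeps the argument short.
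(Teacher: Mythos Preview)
The paper does not give its own proof of this proposition: it is stated with a citation to Hurwitz and used as a black box for the grid-search construction. Your inductive argument---peeling off one spherical chain $R(1,\cdot)\cdots R(d-1,\cdot)$ to match the first row of $U$, reducing to $\diag(1,W)$ with $W\in SO(d-1)$, and then invoking the induction hypothesis with the index shift $k\mapsto k+1$---is the classical route to this result and is correct. The bookkeeping you flag (that the $r=1,\dots,d-2$ blocks of \eqref{eq: rotation parametrization} only involve $R(k,\cdot)$ with $k\ge 2$ and hence fix $e_1$, and that the embedded $(d-1)$-dimensional factorization reproduces exactly these blocks) checks out. The surjectivity of the first row of $C(\beta)$ onto $\mathbb{S}^{d-1}$ with $\beta_1\in[0,2\pi)$ and $\beta_2,\dots,\beta_{d-1}\in[0,\pi)$ is the one place that needs a short explicit argument rather than an appeal to ``standard spherical coordinates'', since the $[0,2\pi)$ factor sits in the first rather than the last slot; the point is that the sign freedom in $\sin\beta_1$ compensates for $\sin\beta_j\ge 0$, $j\ge 2$, and a direct case analysis (as in $d=3$) establishes surjectivity onto all of $\mathbb{S}^{d-1}$, including the half-open endpoints.
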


Proposition \ref{prop:recursiv_def_SOd} provides a  parametrization of $U \in SO(d)$ by a vector of angles $\alpha \in [0, 2 \pi)^{d-1} \times [0, \pi)^{(d-1)(d-2)/2} \subset \R^{d(d-1)/2}$. 
% where
% \begin{equation}
% p :=\sum_{r=1}^{d-1} r = \frac{d(d-1)}{2}.
% \end{equation}
In the context of the grid search, the main benefit of the angle representation of $SO(d)$ is the possibility to construct lattices efficiently. Let
\[
\Theta(a,h) := \{ k h :  k \in \lceil a / h \rceil \}
\quad \text{and} \quad
\Theta(h) := \Theta(2\pi,h)^{d-1} \times \Theta(\pi, h)^{(d-1)(d-2)/2}
\]
be one- and multidimensional lattices, respectively. Then, we define $\Gamma$ in \eqref{eq:P_grid_search} as
\begin{equation}\label{eq: grid set}
\Gamma := \Gamma(h) = \{ U(\alpha) \in SO(d) : \alpha \in \Theta(h)\}.
\end{equation}
Then, for such $\Gamma$ we have the following results. 
\begin{proposition}\label{prop: grid search bound}
Let $h\in (0,1)$. Then, for every $V \in SO(d)$ there exists $\alpha \in \Theta(h)$ such that
\begin{equation}\label{eq:frob_norm_rot_ball}
	\norm{V - U(\alpha)}_{F} \leq d(d-1)h.
\end{equation}
\end{proposition}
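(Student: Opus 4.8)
The plan is to combine the Hurwitz factorization of Proposition~\ref{prop:recursiv_def_SOd} with a telescoping estimate that reduces the distance on $SO(d)$ to a sum of $d(d-1)/2$ one-dimensional angle errors, one per Jacobi rotation.

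\emph{Step 1: choosing the grid point.} By Proposition~\ref{prop:recursiv_def_SOd}, write $V = U(\beta)$ with $\beta = (\beta^1,\dots,\beta^{d-1})$, $\beta^r \in [0,2\pi)\times[0,\pi)^{r-1}$, and enumerate its $m \coloneqq d(d-1)/2$ scalar entries as $\beta_1,\dots,\beta_m$, each lying in an interval $[0,2\pi)$ or $[0,\pi)$. For each $\ell \in [m]$, let $\alpha_\ell$ be a grid point of $\Theta(2\pi,h)$ (resp.\ $\Theta(\pi,h)$) nearest to $\beta_\ell$; since consecutive grid points are at distance $h$ and the grid spans the respective interval, $|\beta_\ell - \alpha_\ell| \le h$. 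Collecting the $\alpha_\ell$ yields $\alpha \in \Theta(h)$, hence $U(\alpha) \in \Gamma(h)$.

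\emph{Step 2: telescoping over the Jacobi factors.} By \eqref{eq: rotation parametrization}, $U(\beta)$ and $U(\alpha)$ are ordered products $R_1\cdots R_m$ and $\tilde R_1 \cdots \tilde R_m$ of $m$ Jacobi rotation matrices \eqref{eq: rotation def}, where the $\ell$-th factors $R_\ell$ and $\tilde R_\ell$ act in the same coordinate plane and differ only by the angle, $\beta_\ell$ resp.\ $\alpha_\ell$. Writing
\begin{equation}
U(\beta) - U(\alpha) = \sum_{\ell=1}^{m} R_1\cdots R_{\ell-1}\,(R_\ell - \tilde R_\ell)\,\tilde R_{\ell+1}\cdots \tilde R_m
\end{equation}
and using that $\|Q_1 X Q_2\|_F = \|X\|_F$ for orthogonal $Q_1,Q_2$ (here the prefix $R_1\cdots R_{\ell-1}$ and the suffix $\tilde R_{\ell+1}\cdots\tilde R_m$ are orthogonal), the triangle inequality gives $\|V - U(\alpha)\|_F \le \sum_{\ell=1}^m \|R_\ell - \tilde R_\ell\|_F$.

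\emph{Step 3: per-factor bound and summation.} The matrix $R_\ell - \tilde R_\ell$ has at most four nonzero entries, each equal to $\cos\beta_\ell - \cos\alpha_\ell$ or $\pm(\sin\beta_\ell - \sin\alpha_\ell)$; since $\sin$ and $\cos$ are $1$-Lipschitz, each is bounded by $|\beta_\ell - \alpha_\ell| \le h$, so $\|R_\ell - \tilde R_\ell\|_F \le \sqrt{4h^2} = 2h$. (A direct computation even gives the identity $\|R_\ell - \tilde R_\ell\|_F = 2\sqrt 2\,\bigl|\sin(\tfrac{\beta_\ell-\alpha_\ell}{2})\bigr| \le \sqrt 2\,|\beta_\ell-\alpha_\ell|$.) Summing over the $m = d(d-1)/2$ factors yields $\|V - U(\alpha)\|_F \le 2mh = d(d-1)h$, as claimed.

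The argument is essentially bookkeeping, and I do not expect a genuine obstacle. The two points that need care are (i) that the one-sided lattices $\Theta(\cdot,h)$ indeed contain a point within distance $h$ of every admissible angle, handled in Step~1, and (ii) that the telescoping in Step~2 is organized so that only the orthogonal invariance of $\|\cdot\|_F$ is used and no operator-norm factors accumulate; putting the prefix in the $R_\ell$'s and the suffix in the $\tilde R_\ell$'s is what makes this clean. If the sharper constant $d(d-1)/\sqrt 2$ were wanted, one would simply invoke the exact identity from Step~3 in place of the crude entrywise bound.
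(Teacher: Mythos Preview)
Your proof is correct and follows essentially the same approach as the paper: Hurwitz factorization, telescoping over the $d(d-1)/2$ Jacobi factors using orthogonal invariance of the Frobenius norm, a $2h$ bound on each $\|R_\ell-\tilde R_\ell\|_F$ from the Lipschitz property of $\sin$ and $\cos$, and summation. The only cosmetic difference is that you select the grid point first and telescope second, whereas the paper telescopes first and then specializes $\alpha$.
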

\begin{proof}
Proposition \ref{prop:recursiv_def_SOd} allows to parametrize $V$ by angles $\beta \in [0, 2 \pi)^{d-1} \times [0, \pi)^{(d-1)(d-2)/2}$ such that 
\[
V = V(\beta) = \prod_{r=1}^{d-1} \prod_{j=1}^{r} R(d - 1 - r + j, \beta^r_{j}) =: \prod_{k = 1}^{d(d-1)/2} R_k(\beta),
\] 
where in the last equality we introduced a short single indexed notation. Then, for any $\alpha \in \Theta(h)$, using the telescopic sum, we get
\begin{align}
	\norm{U(\alpha)- V(\beta)}_F 
	& = \Big\|\prod_{k=1}^{d(d-1)/2} R_k(\alpha) - \prod_{k=1}^{d(d-1)/2} R_k(\beta)\Big\|_F\\
	& = \Big\|\sum_{n=1}^{d(d-1)/2}  \Big[ \prod_{k=1}^{n-1} R_k(\alpha) \Big] \left( R_n(\alpha) - R_n(\beta) \right) \Big[ \prod_{k=n+1}^{d(d-1)/2} R_k(\beta) \Big]\Big\|_F\\
	&\leq 
	\sum_{n=1}^{d(d-1)/2}\Big\| \Big[ \prod_{k=1}^{n-1} R_k(\alpha) \Big] ( R_n(\alpha)-R_n(\beta)) \Big[ \prod_{k=n+1}^{d(d-1)/2} R_k(\alpha) \Big]\Big\|_F \\
	&= \sum_{n=1}^{d(d-1)/2} 
	\Big\|R_n(\alpha)-R_n(\beta)\Big\|_F .
\end{align}
The last equality follows from the fact that the Frobenius norm is invariant under left and right multiplication with orthogonal matrices. Next, recall that for every $n \in [d(d-1)/2]$ there exists a unique pair $r \in [d-1]$ and $j \in [r]$ such that
\begin{align}
	\norm{R_n(\alpha)-R_n(\beta)}_F^2 
	& =\norm{ R(d-1-r+j, \alpha^r_j) - R(d-1-r+j, \beta^r_j)}_{F}^2 \\
	& = 2(\cos(\alpha^r_j)-\cos(\beta^r_j))^2+2(\sin(\alpha^r_j)-\sin(\beta^r_j))^2
	\le 4 |\alpha^r_j - \beta^r_j|^2,
\end{align}
where used definition \eqref{eq: rotation def} of $R$ and that cosine and sine are Lipschitz-1 functions. By construction of $\Theta(h)$ there exists $\alpha \in \Theta(h)$, such that $|\alpha^r_j - \beta^r_j| < h$ for all $r,j$. Thus, we conclude that  
\[
\norm{U(\alpha)- V(\beta)}_F 
\le 2 \sum_{n=1}^{d(d-1)/2} |\alpha^r_j - \beta^r_j|
< 2 h d (d-1) / 2 = d(d-1)h.
\]
\end{proof}

In other words, $\Gamma$ is a $hd(d-1)$-net over $SO(d)$. Hence, for sufficiently small $h$, the minimizer of \eqref{eq:P_grid_search} will be in the neighborhood where local convergence of Riemannian gradient descent is guaranteed. 

\begin{corollary}\label{cor:convergence_global_min}
For $0 < h < 1$, let $U(h)$ denote the minimizer of \eqref{eq:P_grid_search} with $\Gamma(h)$ given by \eqref{eq: grid set}. Then, there exists $h>0$, such that $U(h)$ belongs to a level set defined in Theorem \ref{thm: manifold optimization local}. Consequently, the sequence generated by Riemannian gradient descent with step sizes as in Theorem \ref{thm: manifold optimization global} and initial guess $U^{(0)} = U(h)$ converges to a global minimum of $\ell_\varepsilon$.
\end{corollary}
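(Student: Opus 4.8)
The plan is to combine the covering property of the grid established in Proposition \ref{prop: grid search bound} with the Lipschitz continuity of $\ell_\varepsilon$ on the compact manifold $SO(d)$, and then to invoke Theorem \ref{thm: manifold optimization local}.

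First I would fix a global minimizer $U^* \in \mathcal M$, so that $\ell_\varepsilon(U^*) = \ell^*$. Recall from the proof of Theorem \ref{thm: manifold optimization global} that $\ell_\varepsilon$, being continuously differentiable on the compact set $SO(d)$, is Lipschitz continuous there; let $L_\ell > 0$ be a Lipschitz constant with respect to the Frobenius norm. By Proposition \ref{prop: grid search bound}, for every $h \in (0,1)$ there is an $\alpha \in \Theta(h)$ with $\norm{U^* - U(\alpha)}_F \le d(d-1)h$, so that $U(\alpha) \in \Gamma(h)$ and
\[
\ell_\varepsilon(U(\alpha)) \le \ell_\varepsilon(U^*) + L_\ell\, d(d-1)\, h = \ell^* + L_\ell\, d(d-1)\, h.
\]
Since by \eqref{eq:P_grid_search} the point $U(h)$ minimizes $\ell_\varepsilon$ over $\Gamma(h)$, this gives $\ell_\varepsilon(U(h)) \le \ell^* + L_\ell\, d(d-1)\, h$.

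Next, let $q^* > 0$ be the threshold from Theorem \ref{thm: manifold optimization local}, for which the sublevel set $\{U \in SO(d): \ell_\varepsilon(U) \le \ell^* + q^*\}$ contains $\mathcal M$ and no other fixed point of $\ell_\varepsilon$. Choosing $0 < h < \min\{1,\, q^*/(L_\ell\, d(d-1))\}$ yields $\ell_\varepsilon(U(h)) < \ell^* + q^*$, i.e., $U(h)$ lies in the open level set $\{U \in SO(d): \ell_\varepsilon(U) < \ell^* + q^*\}$ that appears in Theorem \ref{thm: manifold optimization local}. This proves the first assertion. The second assertion then follows immediately: applying Theorem \ref{thm: manifold optimization local} with initial guess $U^{(0)} = U(h)$ and step sizes $\nu_r = \nu \le 1/L$ as in Theorem \ref{thm: manifold optimization global}, the Riemannian gradient descent iterates remain in the level set by the sufficient-decrease estimate \eqref{eq: sufficient decrease} and converge to a point of $\mathcal M$, hence to a global minimum of $\ell_\varepsilon$.

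I do not expect a substantive obstacle here; the statement is essentially a packaging of the earlier results. The only points that need a little care are that one must land in the \emph{open} level set (strict inequality), which is why $h$ is taken strictly below the critical ratio, and that the Lipschitz constant $L_\ell$ is indeed available — this is exactly the compactness argument already invoked in the proof of Theorem \ref{thm: manifold optimization global}.
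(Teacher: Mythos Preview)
Your proof is correct and follows the same overall skeleton as the paper: use Proposition~\ref{prop: grid search bound} to place a grid point close to a global minimizer, use continuity to put that grid point (and hence the grid minimizer $U(h)$) into the sublevel set $\{U:\ell_\varepsilon(U)<\ell^*+q^*\}$, and then invoke Theorem~\ref{thm: manifold optimization local}. The one genuine difference is in how continuity is exploited. The paper argues topologically: the sublevel set is open, so it contains a Frobenius ball of some radius $\delta$ around $U^*$, and one simply takes $h=\delta/(d(d-1))$ to land a grid point inside that ball. You instead argue quantitatively via a global Lipschitz constant $L_\ell$ for $\ell_\varepsilon$ on $SO(d)$, obtaining the explicit threshold $h<q^*/(L_\ell\,d(d-1))$. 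Your route gives a more concrete bound on $h$ (at the cost of introducing $L_\ell$), while the paper's route avoids naming a Lipschitz constant but yields only an implicit $\delta$; both are equally valid and neither requires additional ingredients beyond what is already established in the paper.
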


\begin{proof}
By Theorem \ref{thm: manifold optimization local}, the all fixed points in the set $\mathcal L := \{U \in SO(d): \ell_\varepsilon(U) < \ell^* + q^*\}$ are global minimizers of $\ell_\varepsilon$ over $SO(d)$. It is open as a preimage $\ell_\varepsilon^{-1}( (-\infty, \ell^* + q^*) )$ of an open set via continuous function. Let $U_* \in \mathcal L$ be an arbitrary global minimizer of $\ell_\varepsilon$ over $SO(d)$. Since $\mathcal L$ is open, the open ball $\{ U \in SO(d) : \norm{U - U_*}_F < \delta \}$ is contained in $\mathcal L$ for some $\delta>0$. Then, by Proposition \ref{prop: grid search bound} with $h = \delta/ d (d-1)$, we can find $U_b \in \Gamma$ satisfying $\norm{U_b - U^*}_F < \delta$ and, hence, $U_b \in \mathcal L$. Consequently, the minimizer $U(h)$ of \eqref{eq:P_grid_search} admits 
\[
\ell_\varepsilon(U(h)) = \min_{U \in \Gamma} \ell_\varepsilon(U) \le \ell_\varepsilon(U_b) < \ell^* + q^*,
\]
so that $U(h) \in \mathcal L$. The rest of the claim follows from Theorem \ref{thm: manifold optimization local}.
\end{proof}

% \begin{remark}
% Since $U^{(0)}$ constructed via \eqref{eq:P_grid_search} is in $SO(d)$, the sequence $(U^{(k)})_{k\geq 0}$ generated by \eqref{eq: Riemannian gradient descent} remains in $SO(d)$ independently of the step size $\vartheta_k$ \OM{due to ...}.

% \OM{For the Landing method, the numerical examples studied in section~\ref{sect:numerics} show that there exists $k_0 \in \N$ such that for all $k\geq k_0, x_k \in SO(d)$. The proof of this assertion or a counterexample has to be done in a future work.}
% \end{remark}

We summarize the manifold optimization methods for in Algorithm \ref{alg: man opt}.

\begin{algorithm}
\begin{algorithmic}
	\State $\mathbf{Input}$ Diagonal blocks $B_n$ as in \eqref{eq:block}, $n \in [N]$, step sizes $\{ \nu_r \}_{r \ge 0}$, grid search density $h \in (0,1)$, smoothing $\varepsilon > 0$.
	\State Construct $\Gamma(h)$ as in \eqref{eq: grid set} and compute $U^{(0)}$ as the minimizer of \eqref{eq:P_grid_search}.
	\For{r = 0,1,\ldots}
	\State Compute $U^{(r+1)}$ via \eqref{eq: Riemannian gradient descent} or \eqref{eq: landing} with step sizes $\nu_r$.
	\State Check the stopping criteria.
	\EndFor
	\State \textbf{Return:} $\{ U^{(r)}\}_{r \ge 0}$. 
\end{algorithmic}
\caption{Manifold optimization for edge minimization}\label{alg: man opt}
\end{algorithm}

\subsection{Computational complexity}

The main drawback of computation complexity is the necessity to evaluate the function value for every $U \in \Gamma$. By construction \eqref{eq: grid set}, it leads to an exponential number of elements,
\begin{equation}
|\Theta(h)|=\left(\Bigl\lceil \frac{2\pi}{h} \Bigr\rceil\right)^{d-1}\left(\Bigl\lceil \frac{\pi}{h} \Bigr\rceil\right)^{\frac{(d-1)(d-2)}{2}}.
\end{equation}

Therefore, the grid search is only available for small dimension $d$ and requires efficient computation of matrices $U \in \Gamma$ and corresponding values $\ell_\varepsilon(U)$. 

Given that Algorithm \ref{alg: man opt} is applied to blocks $B^{U,k}$ in \eqref{eq:block}, the dimension of the problems is small as a result of splitting the graph into the finest connected components and optimizing the edges for each of them.

Furthermore, to avoid possible memory issues, the grid $\Theta(h)$ is split into blocks. First, the minimizer of $\ell(U)$ within the blocks is computed, and then $U$ in \eqref{eq:P_grid_search} is taken as the minimum among the blocks.  

As for the computation of $U$, the construction of rotation matrices \eqref{eq: rotation def} leads to a fast multiplication with other matrices. Let $A=(a_1 
\ldots \ a_d)  \in \R^{d \times d}$. Then, for arbitrary $r \in [d-1], \alpha \in [0, 2\pi)$, the product $AR(r, \alpha)$ has is given by
\[
\begin{pmatrix}
a_1 & \ldots & a_{r-1} & \cos(\alpha) a_r + \sin(\alpha) a_{r+1} & \cos(\alpha) a_{r+1} - \sin(\alpha) a_{r} & a_{r+1} & \ldots & a_{d}
\end{pmatrix},
\]
and requires only $4d$ operations. Using this, any $U \in \Gamma$ can be efficiently computed using $\mathcal O(d^3)$ operations.

The computational complexity of Algorithm \ref{alg: man opt} is summarized in Tabular \ref{tab:my_label}.

\begin{table}[h!]
\centering
\scalebox{0.96}{ \begin{tabular}{|l|l|c|}
		\hline
		\multirow{3}{*}{Grid search} & Number of grid points $\Theta(h)$& $\mathcal{O}\bigl((\lceil 2\pi/h \rceil)^{d}(\lceil \pi/h \rceil)^{d^2}\bigr)$\\
		\cline{2-3}
		& Matrix $U \in \Gamma$ & $\mathcal{O}(d^3)$ \\
		\cline{2-3}
		& Loss function $\ell(U)$ & $\mathcal{O}(Nd^3)$\\
		\cline{2-3}
		& Total & $\mathcal{O}\bigl(Nd^3(\lceil 2\pi/h \rceil)^{d}(\lceil \pi/h \rceil)^{d^2}\bigr)$ \\
		\hline
		\multirow{3}{*}{Manifold Optimiztion} & Riemannian/Euclidean gradient & $\mathcal{O}(N d^3)$\\
		\cline{2-3}
		& Riemannian gradient descent \eqref{eq: Riemannian gradient descent}& $\mathcal{O}(K N d^{3})$\\
		\cline{2-3}
		& Landing \eqref{eq: landing} & $\mathcal{O}(K N d^3)$\\
		\hline
\end{tabular}}
\caption{Number of operations for the grid search and for $K$ iterations of the manifold optimization method. Here $d$ is the dimension, $N$ the number of Hessians and $h$ the density of the grid.}
\label{tab:my_label}
\end{table}
%------------------------------------------

\section{Additional numerical results}\label{appendix:numerics}
A more detailed analysis for the results of Figure~\ref{fig:Truncation_random_hessians} are given for $\eta=10^{-9}$ (noise free Hessians), and $\eta=10^{-4}$ (noisy Hessians) in tables~\ref{tab:dim2}, \ref{tab:dim3}, \ref{tab:dim4}, \ref{tab:dim5}. They indicate that the sparsifying performance is better for the grid search initialization for small $h$ compared to random initialization for dimensions $d\in\{4,5\}$ for both clean and noisy data. Riemannian gradient descent performs better than the Landing method on clean data but the comparison on noisy data is inconclusive. Table~\ref{tab:runtime} shows that when the underlying dimension $d$ is small, there is no significant difference between the two methods regarding the time complexity. The only case where we can get improvement from Landing is runtime for large $d$ \cite{ablin2022fast}. In addition, as expected, the runtime effort for the calculation of the rotation matrices $U \in \Gamma(h)$ and the losses $\ell(U)$ increases with the dimension $d,$ the step size $h$ and the number of Hessians $N$. Furthermore, we see that the runtime of the grid search method for the step sizes considered in our numerics is still low and the grid-search initialization provides better results than the random initialisation method. To apply the grid search method more efficiently to more than 1 set of matrices, it is beneficial to calculate the rotation matrices only once, and reuse then for any set of matrices.
%which are basically independent of the test samples, and then determine the minimum loss $\ell(U), U \in \Gamma(U).$
\begin{table}[htbp]
\centering
\scalebox{0.82}{\begin{tabular}{|l|l|c|c|c|c|c|c|c|c|c|c|c|c|c|c|c|c|} 
		\cline{3-18}
		\multicolumn{2}{c|}{}&\multicolumn{8}{c|}{Clean data}&\multicolumn{8}{c|}{Noisy data $\sigma=10^{-3}$}\\
		\cline{3-18}
		\multicolumn{2}{c|}{}&\multicolumn{4}{c|}{Rgd}&\multicolumn{4}{c|}{Landing}&\multicolumn{4}{c|}{Rgd}&\multicolumn{4}{c|}{Landing}\\
		\cline{1-18}
		\multicolumn{2}{|c|}{$i$}&$0$&$1$&$2$&$\geq3$&$0$&$1$&$2$&$\geq3$&$0$&$1$&$2$&$\geq3$&$0$&$1$&$2$&$\geq3$\\
		\hline
		\multicolumn{2}{|c|}{Random initialization}&$100$&$0$&$0$&$0$& $100$&$0$&$0$&$0$ &$98$&$2$&$0$&$0$& $98$&$2$&$0$&$0$\\
		\hline
		\multirow{5}{*}{\makecell{Grid-Search \\ initialization}}&$h=1$&$100$&$0$&$0$&$0$& $100$&$0$&$0$&$0$& $98$&$2$&$0$&$0$& $98$&$2$&$0$&$0$\\ 
		& $h=0.5$& $100$&$0$&$0$&$0$ &$100$&$0$&$0$&$0$& $98$&$2$&$0$&$0$& $98$&$2$&$0$&$0$\\
		& $h=0.25$& $100$&$0$&$0$&$0$ & $100$&$0$&$0$&$0$& $98$&$2$&$0$&$0$ & $98$&$2$&$0$&$0$\\
		& $h=0.125$& $100$&$0$&$0$&$0$& $100$&$0$&$0$&$0$& $98$&$2$&$0$&$0$& $98$&$2$&$0$&$0$\\
		& $h=0.1$& $100$&$0$&$0$&$0$& $100$&$0$&$0$&$0$& $98$&$2$&$0$&$0$& $98$&$2$&$0$&$0$\\
		\hline
\end{tabular}}
\caption{\small{Number of set of matrices such that diff $\chi(U, \eta)=i$ (see \eqref{difference_sparsity}) and input dimension  $d=2$, $\eta=10^{-9}, 10^{-4}$ for clean resp. noisy data. Rgd: Riemannian gradient descent.}}
\label{tab:dim2}
\end{table}

\begin{table}[htbp]
\centering
\scalebox{0.82}{
	\begin{tabular}{|L{2cm}|L{1.65cm}|c|c|c|c|c|c|c|c|c|c|c|c|c|c|c|c|} 
		\cline{3-18}
		\multicolumn{2}{c|}{}&\multicolumn{8}{c|}{Clean data}&\multicolumn{8}{c|}{Noisy data $\sigma=10^{-3}$}\\
		\cline{3-18}
		\multicolumn{2}{c|}{}&\multicolumn{4}{c|}{Rgd}&\multicolumn{4}{c|}{Landing}&\multicolumn{4}{c|}{Rgd}&\multicolumn{4}{c|}{Landing}\\
		\cline{1-18}
		\multicolumn{2}{|c|}{$i$}&$0$&$1$&$2$&$\geq3$&$0$&$1$&$2$&$\geq3$&$0$&$1$&$2$&$\geq3$&$0$&$1$&$2$&$\geq3$\\
		\hline
		\multicolumn{2}{|l|}{Random initialization}&$98$&$0$&$0$&$2$& $87$&$0$&$5$&$8$& $98$&$1$&$1$&$0$& $95$&$2$&$3$&$0$\\
		\hline
		\multirow{5}{*}{\makecell{Grid-Search \\ initialization}}&$h=1$&$99$&$1$&$0$&$0$& $91$&$1$&$0$&$8$& $98$&$1$&$1$&$0$& $98$&$1$&$1$&$0$\\ 
		& $h=0.5$& $100$&$0$&$0$&$0$& $91$&$0$&$0$&$9$& $99$&$0$&$1$&$0$& $99$&$0$&$1$&$0$\\
		& $h=0.25$& $100$&$0$&$0$&$0$ &$91$&$0$&$0$&$9$& $99$&$0$&$1$&$0$& $99$&$0$&$1$&$0$\\
		& $h=0.125$& $100$&$0$&$0$&$0$& $92$&$0$&$0$&$8$& $99$&$0$&$1$&$0$& $99$&$0$&$1$&$0$\\
		& $h=0.1$& $100$&$0$&$0$&$0$& $92$&$0$&$0$&$8$& $100$&$0$&$0$&$0$& $100$&$0$&$0$&$0$\\
		\hline
\end{tabular}}
\caption{\small{Same as table~\ref{tab:dim2} for input dimension $d=3$.}}
\label{tab:dim3}
\end{table}

\begin{table}[htbp]
\centering
\scalebox{0.81}{\begin{tabular}{|L{2cm}|L{1.65cm}|c|c|c|c|c|c|c|c|c|c|c|c|c|c|c|c|} 
		\cline{3-18}
		\multicolumn{2}{c|}{}&\multicolumn{8}{c|}{Clean data}&\multicolumn{8}{c|}{Noisy data $\sigma=10^{-3}$}\\
		\cline{3-18}
		\multicolumn{2}{c|}{}&\multicolumn{4}{c|}{Rgd}&\multicolumn{4}{c|}{Landing}&\multicolumn{4}{c|}{Rgd}&\multicolumn{4}{c|}{Landing}\\
		\cline{1-18}
		\multicolumn{2}{|c|}{$i$}&$0$&$1$&$2$&$\geq3$&$0$&$1$&$2$&$\geq3$&$0$&$1$&$2$&$\geq3$&$0$&$1$&$2$&$\geq3$\\
		\hline
		\multicolumn{2}{|l|}{Random initialization}&$86$&$1$&$8$&$5$& $57$&$2$&$21$&$20$& $84$&$6$&$7$&$3$ & $88$&$1$&$11$&$0$\\
		\hline
		\multirow{5}{*}{\makecell{Grid-Search \\ initialization}}&$h=1$&$94$&$3$&$2$&$1$& $84$&$1$&$7$&$8$& $92$&$5$&$2$&$1$& $92$&$5$&$2$&$1$\\ 
		& $h=0.5$& $98$&$0$&$1$&$1$& $87$&$0$&$7$&$6$& $96$&$2$&$2$&$0$ & $96$&$2$&$2$&$0$\\
		& $h=0.25$& $100$&$0$&$0$&$0$& $89$&$0$&$6$&$5$& $99$&$1$&$0$&$0$ & $99$&$1$&$0$&$0$\\
		&$h=0.125$& $100$&$0$&$0$&$0$& $89$&$0$&$6$&$5$& $99$&$1$&$0$&$0$& $99$&$1$&$0$&$0$\\
		& $h=0.1$&$100$&$0$&$0$&$0$& $89$&$0$&$6$&$5$&$99$&$1$&$0$&$0$&$99$&$1$&$0$&$0$\\
		\hline
\end{tabular}}
\caption{\small{Same as table~\ref{tab:dim2} for input dimension $d=4$}}
\label{tab:dim4}
\end{table}

\begin{table}[htbp]
\centering
\scalebox{0.86}{\begin{tabular}{|L{2cm}|L{1.65cm}|c|c|c|c|c|c|c|c|c|c|c|c|c|c|c|c|} 
		\cline{3-18}
		\multicolumn{2}{l|}{}&\multicolumn{8}{c|}{Clean data}&\multicolumn{8}{c|}{Noisy data $\sigma=10^{-3}$}\\
		\cline{3-18}
		\multicolumn{2}{l|}{}&\multicolumn{4}{c|}{Rgd}&\multicolumn{4}{c|}{Landing}&\multicolumn{4}{l|}{Rgd}&\multicolumn{4}{c|}{Landing}\\
		\cline{1-18}
		\multicolumn{2}{|c|}{$i$}&$0$&$1$&$2$&$\geq3$&$0$&$1$&$2$&$\geq3$&$0$&$1$&$2$&$\geq3$&$0$&$1$&$2$&$\geq3$\\
		\hline
		\multicolumn{2}{|l|}{Random initialization}&$71$&$4$&$13$&$12$& $37$&$7$&$13$&$43$& $70$&$9$&$9$&$12$&$74$&$5$&$8$&$13$\\
		\hline
		\makecell{Grid-Search \\ initialization}&$h=1$& $93$&$2$&$4$&$1$&$85$&$2$&$8$&$5$&$93$&$2$&$4$&$1$&$93$&$2$&$4$&$1$\\ 
		\hline
\end{tabular}}
\caption{\small{Same as table~\ref{tab:dim2} for input dimension $d=5$}}
\label{tab:dim5}
\end{table}
\begin{table}[htbp]
\centering
\scalebox{0.7}{\begin{tabular}{|C{1.65cm}|c|c|c|r|r|r|r|r|r|r|r|r|r|} 
		\hline
		\multirow{3}{*}{\makecell{Input \\ dimension}}&\multicolumn{2}{c|}{Random Init.}&\multicolumn{10}{c|}{Grid-search Init.}\\
		\cline{2-13}
		&\multirow{2}{*}{Rgd}&\multirow{2}{*}{La}&\multicolumn{5}{c|}{Rotation $U \in \Gamma(h)$}&\multicolumn{5}{c|}{Losses $\ell(U), U \in \Gamma(U)$}\\
		\cline{4-13}
		&&&$1$&$0.5$&$0.25$&$0.125$&$0.1$&$1$&$0.5$&$0.25$&$0.125$&$0.1$\\
		\hline
		\multirow{2}{*}{$d=2$}&\multirow{2}{*}{$29.36$}&\multirow{2}{*}{$33.36$}&\multirow{2}{*}{$0.0010$}&\multirow{2}{*}{$0.0009$}&\multirow{2}{*}{$0.0008$}&\multirow{2}{*}{$0.0008$}&\multirow{2}{*}{$0.0008$}&$0.0003$&$0.0003$&$0.0003$&$0.0003$&$0.0003$\\
		\cline{9-13}
		&&&&&&&&$0.0009$&$0.0009$&$0.0009$&$0.0009$&$0.0009$ \\
		\hline
		\multirow{2}{*}{$d=3$}&\multirow{2}{*}{$29.39$}&\multirow{2}{*}{$32.18$}&\multirow{2}{*}{$0.0211$}&\multirow{2}{*}{$0.0211$}&\multirow{2}{*}{$0.0211$}&\multirow{2}{*}{$0.0213$}&\multirow{2}{*}{$0.0215$}&$0.0032$&$0.0032$&$0.0032$&$0.0035$&$0.0041$\\
		\cline{9-13}
		&&&&&&&&$0.0194$&$0.0193$&$0.0193$&$0.0210$&$0.0244$ \\
		\hline
		\multirow{2}{*}{$d=4$}&\multirow{2}{*}{$29.66$}&\multirow{2}{*}{$32.50$}&\multirow{2}{*}{$0.6105$}&\multirow{2}{*}{$0.7062$}&\multirow{2}{*}{$2.0015$}&\multirow{2}{*}{$11.3731$}&\multirow{2}{*}{$168.4473$}&$0.0343$&$0.0761$&$0.1732$&$5.9366$&$12.4121$\\
		\cline{9-13}
		&&&&&&&&$0.3434$&$0.7612$&$1.7323$&$59.3655$&$124.1206$ \\
		\hline
		\multirow{2}{*}{$d=5$}&\multirow{2}{*}{$29.77$}&\multirow{2}{*}{$32.55$}&\multirow{2}{*}{$23.5289$}&\multirow{2}{*}{$-$}&\multirow{2}{*}{$-$}&\multirow{2}{*}{$-$}&\multirow{2}{*}{$-$}&$1.1344$&$-$&$-$&$-$&$-$\\
		\cline{9-13}
		&&&&&&&&$17.0158$&$-$&$-$&$-$&$-$ \\
		\hline
\end{tabular}}
\caption{\small{Mean runtime in seconds$(s)$ for the $5$-time random initialisation method with $K=5\cdot 10^3$ iterations and the grid search initialisation (step size $h=1, 0.5, 0.25, 0.125, 0.1$) procedure over $10$ random chosen examples out of the $100$ randomly generated examples for dimension $d=2, 3, 4, 5$. For each dimension $d=2,3,4,5$ the first row in the subtable losses $\ell(U), U \in \Gamma(U)$ denotes the minimal runtime for $N=1$ Hessians and the second row the maximal runtime for $N=d(d+1)/2$ Hessians. Rgd: Riemannian gradient descent. La: Landing method.}}
\label{tab:runtime}
\end{table}
\end{document}